\newcommand{\leftexp}[2]{{\vphantom{#2}}^{#1}{#2}}
\DeclareMathAlphabet{\mathpzc}{OT1}{pzc}{m}{it}
\def\itemNum$#1${\item $\displaystyle#1$
   \hfill\refstepcounter{equation}(\theequation)}
\newcommand*{\DashedArrow}[1][]{\mathbin{\tikz [baseline=-0.25ex,-latex, dashed,#1] \draw [#1] (0pt,0.5ex) -- (1.3em,0.5ex);}}%
\newcommand*{\dashtwoheadrarrow}{\DashedArrow[->>,densely dashed]}
\newtheorem{Lem}{Lemma}[section]
\newtheorem{Prop}[Lem]{Proposition}
\newtheorem*{Def}{Definition}
\theoremstyle{plain}
\newtheorem{Thm}[Lem]{Theorem}
\theoremstyle{definition}
\declaretheorem[numbered=no,name=Example,qed={\lower-0.3ex\hbox{$\triangleleft$}}]{Ex}
\newtheorem*{Rem}{Remark}
\newcommand{\Hom}{\text{\textnormal{Hom}}}
\newcommand{\Ext}{\text{\textnormal{Ext}}}
\DeclareMathOperator{\im}{im}
\newcommand{\sbt}{\,\begin{picture}(-1,1)(-1,-3)\circle*{2.5}\end{picture}\ }
\begin{document}

\title{The Hall module of an exact category with duality}

\author[M.\,B. Young]{Matthew B. Young}
\address{Department of Mathematics\\
The University of Hong Kong\\
Pokfulam, Hong Kong}
\email{myoung@maths.hku.hk}

\date{\today}

\keywords{Representations of quivers, Hall algebras, quantum groups.}
\subjclass[2010]{Primary: 16G20 ; Secondary 17B37}

\begin{abstract}
We construct from a finitary exact category with duality $\mathcal{A}$ a module over its Hall algebra, called the Hall module, encoding the first order self-dual extension structure of $\mathcal{A}$. We study in detail Hall modules arising from the representation theory of a quiver with involution. In this case we show that the Hall module is naturally a module over the specialized reduced $\sigma$-analogue of the quantum Kac-Moody algebra attached to the quiver. For finite type quivers, we explicitly determine the decomposition of the Hall module into irreducible highest weight modules.
\end{abstract}

\maketitle

\setcounter{footnote}{0}

\section*{Introduction}
\label{sec:intro}

Let $\mathcal{A}$ be an abelian category with finite $\Hom$ and $\Ext^1$ sets, called finitary below. In \cite{ringel1990} Ringel defined the Hall algebra $\mathcal{H}_{\mathcal{A}}$, an associative algebra whose multiplication encodes the first order extension structure of $\mathcal{A}$. There is also a coalgebra structure on $\mathcal{H}_{\mathcal{A}}$ which, if $\mathcal{A}$ is hereditary, makes $\mathcal{H}_{\mathcal{A}}$ into a (twisted) bialgebra \cite{green1995}. The category $Rep_{\mathbb{F}_q}(Q)$ of representations of a quiver over a finite field is a finitary hereditary category. The corresponding Hall algebra $\mathcal{H}_Q$ contains a subalgebra isomorphic to the positive part of the quantum Kac-Moody algebra associated to $Q$, specialized at $\sqrt{q}$ \cite{ringel1990v3}, \cite{green1995}. A second example of a finitary hereditary category is the category of coherent sheaves over a smooth projective curve $X$ defined over $\mathbb{F}_q$. In the simplest case, $X=\mathbb{P}^1$, the Hall algebra contains a subalgebra isomorphic to a positive part of the quantum affine algebra $U_{\sqrt{q}}(\hat{\mathfrak{sl}}_2)$ \cite{kapranov1997}. More generally, Hall algebras can be defined for exact categories \cite{hubery2006} and often behave similarly to quantum nilpotent groups \cite{berenstein2012}.

In this paper we introduce an analogue of the Hall algebra when objects of $\mathcal{A}$ are allowed to carry non-degenerate quadratic forms. To do this, we work with exact categories with duality. In this setting, a self-dual object is an object of $\mathcal{A}$ together with a symmetric isomorphism with its dual. Instead of using extensions to define an algebra we use the self-dual extension structure of $\mathcal{A}$ to define a $\mathcal{H}_{\mathcal{A}}$-module, called the Hall module and denoted by $\mathcal{M}_{\mathcal{A}}$; see Theorem \ref{thm:hallModuleExact}. More precisely, a self-dual exact sequence is a diagram
\[
0 \rightarrow U \rightarrow M \dashrightarrow N \rightarrow 0
\]
presenting $U \in \mathcal{A}$ as an isotropic subobject of the self-dual object $M$ and presenting the self-dual object $N$ as the isotropic reduction of $M$ by $U$.  We also show that $\mathcal{M}_{\mathcal{A}}$ is naturally a $\mathcal{H}_{\mathcal{A}}$-comodule. It is important to be able to twist the Hall module so as to obtain modules over the Ringel-twisted Hall algebra. For example, the connection between quantum groups and Hall modules described below is most clear when using this twist. In Theorem \ref{thm:descendtoGroth} we construct such a module twist using an integer valued function $\mathcal{E}$ on the Grothendieck group of $\mathcal{A}$. The function $\mathcal{E}$ plays the role of the Euler form for categories with duality and is therefore of independent interest. In Theorem \ref{thm:sdRiedtmann}, we prove an identity relating $\mathcal{E}$, the Euler form and the stacky number of self-dual extensions in the case that $\mathcal{A}$ is hereditary. The proof develops some basic self-dual homological algebra and uses the combinatorics of self-dual analogues of Grothendieck's \textit{extensions panach\'{e}es} \cite{grothendieck1972}, \cite{bertrand2013}.

In Section \ref{sec:hallModQuantGrp} we study Hall modules arising from the representation theory of a quiver with contravariant involution $(Q,\sigma)$. From the involution and a choice of signs we define a duality structure on $Rep_{\mathbb{F}_q}(Q)$, with $q$ odd. For particular signs, the self-dual objects coincide with the orthogonal and symplectic representations of Derksen and Weyman \cite{derksen2002}. The module and comodule structures are incompatible in that $\mathcal{M}_Q$ is not a Hopf module, even in a twisted sense. In Theorem \ref{thm:redModuleStructure}, we instead show that the action and coaction of the simple representations $[S_i] \in \mathcal{H}_Q$ make $\mathcal{M}_Q$, with its $\mathcal{E}$-twisted module structure, a module over $B_{\sigma}(\mathfrak{g}_Q)$, the specialized reduced $\sigma$-analogue of $U_v(\mathfrak{g}_Q)$.  The proof is combinatorial in nature and involves counting configurations of pairs of self-dual exact sequences, in the spirit of Green's proof of the bialgebra structure of the Hall algebra \cite{green1995}. We describe in Theorem \ref{thm:hallModDecomp} the decomposition of $\mathcal{M}_Q$ into irreducible highest weight $B_{\sigma}(\mathfrak{g}_Q)$-modules. The generators are cuspidal elements of $\mathcal{M}_Q$, i.e. elements that are annihilated by the coaction of each $[S_i]$. The proof relies on a canonically defined non-degenerate bilinear form on the Hall module and a characterization of irreducible highest weight modules due to Enomoto and Kashiwara \cite{enomoto2008}.

In Section \ref{sec:ftHallMod} we restrict attention to finite type quivers. Unlike ordinary quiver representations, self-dual representations in general have non-trivial $\overline{\mathbb{F}}_q \slash \mathbb{F}_q$-forms. We extend results of \cite{derksen2002} (over algebraically closed fields) to explicitly describe all such forms and classify the indecomposable self-dual $\mathbb{F}_q$-representations.  The classification is summarized in Theorem \ref{thm:sdGabriel} where a partial root theoretic interpretation of the indecomposables is given. The main application of this result is to the explicit decomposition of Hall modules of finite type quivers into irreducible highest weight $B_{\sigma}(\mathfrak{g}_Q)$-modules; see Theorems \ref{thm:noKFormIrred} and \ref{thm:noHypCusp}. The generators are written as alternating sums of the $\overline{\mathbb{F}}_q \slash \mathbb{F}_q$-forms of self-dual indecomposables.

In \cite{enomoto2009}, Enomoto proved a result related to Theorems \ref{thm:redModuleStructure} and \ref{thm:hallModDecomp}, showing that induction and restriction operators along $[S_i]$ endow the Grothendieck group of a category of perverse sheaves on the moduli stack of complex orthogonal representations with the structure of a highest weight $B_{\sigma}(\mathfrak{g}_Q)$-module. In the terminology of the present paper, the weight module in \cite{enomoto2009} is generated by the trivial orthogonal representation, whereas Theorems \ref{thm:redModuleStructure} and \ref{thm:hallModDecomp} hold for arbitrary dualities and describe the decomposition of the entire Hall module. The techniques used by Enomoto generalize Lusztig's geometric approach to canonical bases \cite{lusztig1990} and are completely different from those used in this paper. The existence of both approaches suggests a self-dual analogue of Lusztig's purity result \cite{lusztig1998} for multiplicity complexes of perverse sheaves. This would provide a direct link between \cite{enomoto2009} and the present paper.

\textbf{Notations and assumptions:} In this paper, all fields are assumed to have characteristic different from two. In particular, the number of elements $q$ in the finite field $\mathbb{F}_q$ is necessarily odd. All categories are assumed to be essentially small. We write $Iso(\mathcal{A})$ for the set of isomorphism classes of objects of a category $\mathcal{A}$.

\subsubsection*{Acknowledgements}
The author would like to thank Cheng Hao for helpful comments during the preparation of this work and Michael Movshev for his insights and encouragement. The author was partially supported by an NSERC Postgraduate Scholarship.

\section{The Hall algebra of an exact category}
\label{sec:hallAlgExact}

Let $\mathcal{A}$ be an exact category in the sense of Quillen \cite{quillen1973}. In particular, $\mathcal{A}$ is additive and is equipped with a collection $\mathcal{F}$ of kernel-cokernel pairs $(i, \pi)$, called short exact sequences and denoted
\begin{equation}
\label{eq:shortES}
U \overset{i}{\rightarrowtail} X \overset{\pi}{\twoheadrightarrow} V,
\end{equation}
satisfying a collection of axioms \cite{quillen1973}, \cite{buhler2010}. For example, abelian categories and their extension-closed full subcategories have canonical exact structures.  A morphism $i$ is called an admissible monic if it occurs in a pair $(i, \pi) \in \mathcal{F}$.

Denote by $\underline{\mathcal{F}}^X_{U,V}$ the set of short exact sequences of the form \eqref{eq:shortES}. Assume that $\mathcal{A}$ is finitary, that is, for all $U,V \in \mathcal{A}$, the set $\Hom(U,V)$ is finite and $\underline{\mathcal{F}}^X_{U,V}$ is non-empty for only finitely many $X \in Iso(\mathcal{A})$. The Hall numbers are then the cardinalities
\[
F^X_{U,V} = \vert \{ \tilde{U} \subset X \; \vert \; \tilde{U} \simeq U, \; X \slash \tilde{U} \simeq V  \} \vert,
\]
where the subobjects $\tilde{U}$ are required to be admissible. Setting $a(U) = \vert Aut(U) \vert$ we have $\vert \underline{\mathcal{F}}^X_{U,V} \vert = a(U) a(V) F^X_{U,V}$.

Fix an integral domain $R$ containing $\mathbb{Q}$, a unit $\nu \in R$ and a bilinear form $c$ on $K(\mathcal{A})$, the Grothendieck group of $\mathcal{A}$. The Hall algebra of $\mathcal{A}$ is the free $R$-module with basis $Iso(\mathcal{A})$,
\[
\mathcal{H}_{\mathcal{A}} = \bigoplus_{U \in Iso(\mathcal{A}) } R [U].
\]
The associative multiplication is given by \cite{ringel1990}, \cite{hubery2006}
\begin{equation}
\label{eq:hallMult}
[U] [V] = \nu^{c(V,U)}\sum_{X \in Iso(\mathcal{A})} F^X_{U,V} [X].
\end{equation}
Similarly, $\mathcal{H}_{\mathcal{A}}$ is a topological coassociative coalgebra with coproduct \cite{green1995}
\[
\Delta [X] = \sum_{U,V \in Iso(\mathcal{A})} \nu^{c(V,U)}\frac{ a(U) a(V)}{a(X)} F^X_{U,V} [U] \otimes [V].
\]
In general, the coproduct takes values in the completion $\mathcal{H}_{\mathcal{A}} \hat{\otimes}_R \mathcal{H}_{\mathcal{A}}$ consisting of all formal linear combinations $\sum_{U,V} c_{U,V} [U] \otimes [V]$; see \cite{schiffmann2006}. Both the product and coproduct respect the natural $K(\mathcal{A})$-grading of $\mathcal{H}_{\mathcal{A}}$.

Suppose that $\mathcal{A}$ is $\mathbb{F}_q$-linear. If $\mathcal{A}$ has finite homological dimension and finite dimensional $\Ext^i$ groups, $i\geq 0$, then its Euler form is the bilinear form on $K(\mathcal{A})$ defined by
\[
\langle U,V \rangle = \sum_{i \geq 0} (-1)^i \hbox{dim}_{\mathbb{F}_q} \, \Ext^i(U,V).
\]
Its symmetrization is denoted $( \cdot, \cdot)$. With the choices $\nu = \sqrt{q}^{-1}$, $R = \mathbb{Q}[\nu, \nu^{-1}] \subset \mathbb{R}$ and $c= - \langle \cdot, \cdot \rangle$, $\mathcal{H}_{\mathcal{A}}$ is called the Ringel-Hall algebra of $\mathcal{A}$.

The following fundamental result asserts the compatibility of the product and coproduct when $\mathcal{A}$ is hereditary, i.e. of homological dimension at most one.

\begin{Thm}[\cite{green1995}]
\label{thm:green}
Let $\mathcal{H}_{\mathcal{A}}$ be the Ringel-Hall algebra of a hereditary abelian category $\mathcal{A}$. Equip $\mathcal{H}_{\mathcal{A}} \hat{\otimes}_R \mathcal{H}_{\mathcal{A}}$ with the algebra structure given on homogeneous elements by
\[
(x \otimes y)  (z \otimes w) = \nu^{-(y,z)} xz \otimes yw.
\]
Then $\Delta: \mathcal{H}_{\mathcal{A}} \rightarrow \mathcal{H}_{\mathcal{A}} \hat{\otimes}_R \mathcal{H}_{\mathcal{A}}$ is an algebra homomorphism.
\end{Thm}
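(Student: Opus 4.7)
The plan is to verify $\Delta([U][V])=\Delta([U])\Delta([V])$ on basis elements by expanding both sides in the $[M]\otimes[N]$ basis and matching coefficients. On the left, first substitute $[U][V]=\nu^{-\langle V,U\rangle}\sum_X F^X_{U,V}[X]$ and apply $\Delta$ term by term. On the right, expand each of $\Delta[U]$ and $\Delta[V]$ as a sum over admissible filtrations, multiply using the twisted algebra structure on $\mathcal{H}_{\mathcal{A}}\hat{\otimes}_R\mathcal{H}_{\mathcal{A}}$, and then re-expand the Hall products $[A][C]$ and $[B][D]$ as sums over $M$ and $N$.

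After this bookkeeping, equating the coefficients of $[M]\otimes[N]$ reduces the statement to a numerical identity for every $U,V,M,N\in Iso(\mathcal{A})$. Using bilinearity of $\langle\cdot,\cdot\rangle$ together with the Grothendieck-group relations $[U]=[A]+[B]$, $[V]=[C]+[D]$, $[M]=[A]+[C]$, $[N]=[B]+[D]$, a direct calculation shows that the two resulting $\nu$-exponents differ by exactly $-2\langle D,A\rangle$. Since $\nu^{2\langle D,A\rangle}=q^{-\langle D,A\rangle}=|\Ext^1(D,A)|/|\Hom(D,A)|$ in the hereditary case, the problem reduces to Green's combinatorial identity
\begin{equation*}
\sum_X \frac{F^X_{U,V}\,F^X_{M,N}}{a(X)} \;=\; \sum_{A,B,C,D}\frac{|\Ext^1(D,A)|}{|\Hom(D,A)|}\cdot\frac{a(A)a(B)a(C)a(D)}{a(U)a(V)a(M)a(N)}\,F^U_{A,B}\,F^V_{C,D}\,F^M_{A,C}\,F^N_{B,D}.
\end{equation*}

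To establish this identity, I would interpret both sides as a double count of commutative $3\times 3$ diagrams with short exact rows and columns whose middle row is the given exact sequence $U\rightarrowtail X\twoheadrightarrow V$, middle column is $M\rightarrowtail X\twoheadrightarrow N$, and whose four corner objects are $A,B,C,D$. The left side groups such diagrams by the choice of middle object $X$ together with the two compatible admissible filtrations; the right side groups them by the corner data together with the outer rows and columns. The key input is the identification of the set of $3\times 3$ completions of a fixed outer frame with a torsor for a quotient of $\Ext^1(D,A)$, which is an instance of Grothendieck's theory of \emph{extensions panach\'ees}: the vanishing $\Ext^2(D,A)=0$, furnished by the hereditary hypothesis, guarantees the non-emptiness of this torsor, and the factor $|\Ext^1(D,A)|/|\Hom(D,A)|$ arises from quotienting by the action of $\Hom(D,A)$ on the middle term.

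The principal obstacle is precisely this homological step, which is the one place the hereditary hypothesis enters essentially; once the $3\times 3$ completion count is in hand, the remaining verification amounts to routine manipulation of Hall numbers and automorphism group orders, with all Euler-form twists conspiring to produce the correct weight $q^{-\langle D,A\rangle}$.
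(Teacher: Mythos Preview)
The paper does not give its own proof of this theorem: it is stated as a citation of Green's result \cite{green1995} and used as background. So there is no ``paper's proof'' to compare against. Your outline is essentially Green's original argument --- reduce $\Delta([U][V])=\Delta([U])\Delta([V])$ to the combinatorial identity you wrote, and prove that identity by counting $3\times 3$ exact commutative diagrams, with the hereditary hypothesis entering via the \emph{extensions panach\'ees} step to guarantee existence of a completion and to produce the factor $\vert\Ext^1(D,A)\vert/\vert\Hom(D,A)\vert$. The sketch is correct in its broad strokes; one small imprecision is that the set of completions is a torsor for $\Ext^1(D,A)$ itself (not a quotient thereof), and the $\Hom(D,A)$ factor enters when you pass from diagrams to equivalence classes, since $\Hom(D,A)$ acts on the middle object $X$ fixing the outer frame. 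It is worth noting that the paper later (Section~2.3) develops exactly this \emph{extensions panach\'ees} machinery in the self-dual setting to prove Theorem~\ref{thm:sdRiedtmann}, so your instinct to invoke it here is well aligned with the paper's methods.
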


Finally, in \cite{green1995}, Green defined an $R$-valued non-degenerate symmetric bilinear form on $\mathcal{H}_{\mathcal{A}}$ by $\displaystyle ( [U], [V] )_{\mathcal{H}} = \frac{\delta_{U,V}}{a(U)}$. This form satisfies
\[
( x \otimes y , \Delta z )_{\mathcal{H} \otimes \mathcal{H}}  =  ( x y , z )_{\mathcal{H}}, \;\;\;\ x,y,z  \in \mathcal{H}_{\mathcal{A}}
\]
where $(x \otimes y , x^{\prime} \otimes y^{\prime} )_{\mathcal{H} \otimes \mathcal{H}} = (x,x^{\prime})_{\mathcal{H}} (y,y^{\prime})_{\mathcal{H}}$.

The category $Rep_{\mathbb{F}_q}(Q)$ satisfies the conditions of Theorem \ref{thm:green}. Its Hall algebra, discussed in Section \ref{sec:hallModQuantGrp}, is closely related to the quantum Kac-Moody algebra attached to $Q$. We describe here a second example only briefly. The reader can find many examples of Hall algebras in \cite{schiffmann2006}.

\begin{Ex}[\cite{kapranov1997}]
Let $X$ be a smooth projective curve defined over $\mathbb{F}_q$. Theorem \ref{thm:green} applies to the category of coherent sheaves over $X$. The full exact subcategory of vector bundles defines a subalgebra $\mathcal{H}_{Vect_X} \subset \mathcal{H}_{Coh_X}$. Using the ad\`{e}lic description of the stack of vector bundles over $X$, $\mathcal{H}_{Vect_X}$ can be interpreted as the unramified automorphic forms for $GL$ defined over $\mathbb{F}_q(X)$ with multiplication given by the parabolic Eisenstein series map. Incorporating torsion sheaves gives a Hall algebraic realization of Hecke operators. The quadratic identities satisfied by cusp eigenforms become quadratic relations in $\mathcal{H}_{Coh_X}$. These relations imply that $\mathcal{H}_{Coh_{\mathbb{P}^1}}$ is isomorphic to the semidirect product of the Hall algebra of torsion sheaves (a tensor product of classical Hall algebras) with a negative part of the quantum affine algebra $U_{\sqrt{q}^{-1}}(\hat{\mathfrak{sl}}_2)$. In higher genus the quadratic relations no longer determine $\mathcal{H}_{Coh_X}$. See \cite{schiffmann2011}, \cite{kapranov2012} for results in higher genus.
\end{Ex}

\section{The Hall module of an exact category with duality}
\label{sec:hallModExact}

\subsection{Exact categories with duality}
We recall some basics of the theory of exact categories with duality. See \cite{balmer2005} for details.

\begin{Def}
\begin{enumerate}[leftmargin=0cm,itemindent=.6cm,labelwidth=\itemindent,labelsep=0cm,align=left]
\item
An exact category with duality is a triple $(\mathcal{A}, S, \Theta)$ consisting of an exact category $\mathcal{A}$, an exact contravariant functor $S: \mathcal{A} \rightarrow \mathcal{A}$ and an isomorphism $\Theta: 1_{\mathcal{A}} \xrightarrow[]{\sim} S^2$ satisfying $S(\Theta_U) \Theta_{S(U)} = 1_{S(U)}$ for all $U \in \mathcal{A}$.

\item A self-dual object of $(\mathcal{A}, S, \Theta)$ is an object $N \in \mathcal{A}$ together with an isomorphism $\psi_N: N \xrightarrow[]{\sim} S(N)$ satisfying $S(\psi_N) \Theta_N = \psi_N$.
\end{enumerate}
\end{Def}

The notation $(N, \psi_N) \in \mathcal{A}_S$, or just $N \in \mathcal{A}_S$ if $\psi_N$ is understood, will indicate that $(N, \psi_N)$ is a self-dual object. We also sometimes refer to $\mathcal{A}$, instead of $(\mathcal{A}, S, \Theta)$, as an exact category with duality. We say that $N, N^{\prime} \in \mathcal{A}_S$ are isometric, notation $N \simeq_S N^{\prime}$, if there exists an isomorphism $\phi: N \xrightarrow[]{\sim} N^{\prime}$ satisfying $S(\phi) \psi_{N^{\prime}} \phi = \psi_N$. The set of isometry classes of self-dual objects $Iso(\mathcal{A}_S)$ is an abelian monoid under orthogonal direct sum.

\begin{Ex}
Let $X$ be a smooth variety and write $Vect_X$ for the exact category of vector bundles over $X$. Given $s \in \{\pm 1\}$ and a line bundle $\mathcal{L} \rightarrow X$, define a duality on $Vect_X$ by $S(\mathcal{V}) = \mathcal{V}^{\vee} \otimes_{\mathcal{O}_X} \mathcal{L}$, where $\mathcal{V}^{\vee}= \Hom_{\mathcal{O}_X}(\mathcal{V}, \mathcal{O}_X)$. Let $\Theta_{\mathcal{V}}: \mathcal{V} \xrightarrow[]{\sim} \mathcal{V}^{\vee \vee}$ be the signed evaluation map, given at the level of sections by $\Theta(f)(x) = s \cdot f(x)$, $x \in X$. Then $(Vect_X, S, \Theta)$ is an exact category with duality, the self-dual objects being $\mathcal{L}$-valued orthogonal or symplectic vector bundles over $X$.
\end{Ex}

\begin{Def}
Let $N \in \mathcal{A}_S$. An admissible monic $U \overset{i}{\rightarrowtail} N$ is called isotropic if $S(i) \psi_N i=0$ and if the induced monic from $U$ to its orthogonal $U^{\perp}:= \text{\textnormal{ker}}\, S(i)\psi_N $ is also admissible.
\end{Def}

We write $U \overset{\perp}{\subset} N$ when $U$ is an isotropic subobject of $N$. The following categorical version of orthogonal and symplectic reduction will be used throughout the paper.

\begin{Prop}[{\cite[Proposition 5.2]{quebbemann1979}}]
\label{prop:sdReduction}
If $i:U \rightarrowtail N$ is isotropic, then the reduction $N /\!/ U:= U^{\perp} \slash U$ inherits form $N$ a canonical self-dual structure, i.e. there exists a self-dual structure $\tilde{\psi}$ on $N /\!/ U$, unique up to isometry, making the following diagram commute:
\begin{equation}
\label{diag:sdExact}
\begin{tikzpicture}[every node/.style={on grid},  baseline=(current bounding box.center),node distance=1.9]
  \node (A) {$U$};
  \node (B) [right=of A]{$E$}; 
  \node (C) [right=of B]{$N /\!/ U$};
  \node (D) [below=of A]{$U$};
  \node (E) [right=of D]{$N$}; 
  \node (F) [right=of E]{$S(E)$};
  \node (G) [below=of E]{$S(U)$};
  \node (H) [right=of G]{$S(U)$};
  
  \draw[>->] (A)-- node [above] {\footnotesize $j$ }(B);
  \draw[->>] (B)-- node [above] {\footnotesize $\pi$ }  (C);
  \draw[>->] (D)-- node [above] {\footnotesize $i$ }  (E);
  \draw[->>] (E)-- node [above] {\footnotesize $S(k) \psi_N$ }  (F);
  \draw[double equal sign distance] (A)-- (D);
  \draw[double equal sign distance] (G)-- (H);
  \draw[>->] (B)-- node [left] {\footnotesize $k$ }  (E);
  \draw[>->] (C)-- node [right] {\footnotesize $S(\pi) \tilde{\psi}$ }  (F);
  \draw[->>] (E)-- node [left] {\footnotesize $S(i) \psi_N$ }  (G);
  \draw[->>] (F)-- node [right] {\footnotesize $S(j)$ }  (H);
\end{tikzpicture}
\end{equation}
Here $k$ is a kernel of $S(i) \psi_N$ and $\pi$ is a cokernel for the induced monic $j$.
\end{Prop}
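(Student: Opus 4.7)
The plan is to construct $\tilde{\psi}$ as the self-dual structure induced on the subquotient $U^{\perp}/U$ by $\psi_N$, and to verify the required properties via successive universal-property factorizations combined with the identity $S(\psi_N)\Theta_N = \psi_N$ and naturality of $\Theta$.

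First I would show that the middle row $U \overset{i}{\rightarrowtail} N \overset{S(k)\psi_N}{\twoheadrightarrow} S(E)$ is an admissible short exact sequence. Applying the exact contravariant functor $S$ to the middle column $E \overset{k}{\rightarrowtail} N \overset{S(i)\psi_N}{\twoheadrightarrow} S(U)$ produces a short exact sequence $S^2(U) \rightarrowtail S(N) \twoheadrightarrow S(E)$; composing with $\psi_N$ and $\Theta_U$ the computation $\psi_N^{-1} S(S(i)\psi_N) \Theta_U = \psi_N^{-1} S(\psi_N) \Theta_N i = i$ (using naturality of $\Theta$ and the self-duality of $\psi_N$) identifies the resulting monic with $i$.

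The construction of $\tilde{\psi}$ then proceeds in two factorizations. The composite $S(k)\psi_N k \colon E \to S(E)$ satisfies $S(k)\psi_N k j = S(k)\psi_N i = 0$ by exactness of the middle row, so it descends along the cokernel $\pi$ to a unique map $\tilde{\psi}_0 \colon N /\!/ U \to S(E)$ with $\tilde{\psi}_0 \pi = S(k)\psi_N k$. Since $k = \ker(S(i)\psi_N)$, we have $S(j) \tilde{\psi}_0 \pi = S(i)\psi_N k = 0$, hence $S(j)\tilde{\psi}_0 = 0$; as $S(\pi)$ is a kernel of $S(j)$, there is a unique $\tilde{\psi} \colon N /\!/ U \to S(N /\!/ U)$ with $S(\pi)\tilde{\psi} = \tilde{\psi}_0$, which by construction makes the diagram commute.

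To see $\tilde{\psi}$ is an isomorphism, I would note that $S(k)\psi_N k$ has kernel exactly $U$ (exactness of the middle row restricted along $k$) and image $\ker S(j) = \im S(\pi)$, forcing both $\tilde{\psi}_0$ and $\tilde{\psi}$ to be isomorphisms. For the symmetry relation $S(\tilde{\psi})\Theta_{N /\!/ U} = \tilde{\psi}$, I would apply $S$ to the identity $S(\pi)\tilde{\psi}\pi = S(k)\psi_N k$, substitute the naturality identities $S^2(\pi)\Theta_E = \Theta_{N /\!/ U}\pi$ and $S^2(k)\Theta_E = \Theta_N k$ together with $S(\psi_N)\Theta_N = \psi_N$, and invoke that $\pi$ is epic and $S(\pi)$ monic to conclude. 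Uniqueness up to isometry is then automatic, since the defining equations determine $\tilde{\psi}$ once $k$ and $\pi$ are fixed, and any other admissible choice of kernel--cokernel data differs by a unique isomorphism. The main obstacle I anticipate is the bookkeeping required to verify at each step that the factorizations take place within the admissible arrows of the exact category---using exactness of $S$, stability of admissibles under pullback and pushout, and the Noether-type identifications $C \rightarrowtail N/U \twoheadrightarrow S(U)$---rather than merely as abstract universal arrows valid in an ambient abelian setting.
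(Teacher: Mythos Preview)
The paper does not supply its own proof of this proposition; it is stated with a citation to \cite[Proposition 5.2]{quebbemann1979} and used as input. Your argument is essentially the standard proof found there: factor $S(k)\psi_N k$ through the cokernel $\pi$ and then through the kernel $S(\pi)$ of $S(j)$, check symmetry via naturality of $\Theta$ and the self-duality of $\psi_N$, and deduce invertibility from the snake-type identification of kernel and image. The sketch is correct and there is nothing substantive to compare.

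One small comment on the ``isomorphism'' step: in a bare exact category you should phrase this not via ``image'' but by observing that the factorizations exhibit the right column $N/\!/U \xrightarrow{S(\pi)\tilde{\psi}} S(E) \xrightarrow{S(j)} S(U)$ as $S$ applied to the admissible short exact sequence $U \overset{j}{\rightarrowtail} E \overset{\pi}{\twoheadrightarrow} N/\!/U$, so that $S(\pi)\tilde{\psi}$ is an admissible monic with the same cokernel as the admissible monic $S(\pi)$; hence $\tilde{\psi}$ is an isomorphism. This avoids appealing to an ambient abelian category, which is the concern you already flagged.
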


Motivated by Proposition \ref{prop:sdReduction}, we make the following definition.

\begin{Def}
Given $U \in \mathcal{A}$, $M,N \in \mathcal{A}_S$, let $\underline{\mathcal{G}}^N_{U,M}$ be the set of equivalence classes of exact commutative diagrams $(E;i,j,k,\pi)$ of the form \eqref{diag:sdExact}, with $(N /\!/ U, \tilde{\psi})$ replaced with $(M, \psi_M)$. Two such diagrams $E, E^{\prime}$, are equivalent if there exists an isomorphism $E \xrightarrow[]{\sim} E^{\prime}$ making all appropriate diagrams commute.
\end{Def}

Elements of $\underline{\mathcal{G}}^N_{U,M}$  are called self-dual exact sequences and are written
\[
U \overset{i}{\rightarrowtail} N \overset{\pi}{\dashtwoheadrarrow} M.
\]

\subsection{Hall modules}
Let $\mathcal{A}$ be a finitary exact category with duality. For $U \in \mathcal{A}$ and $M,N \in \mathcal{A}_S$, define the self-dual Hall number by
\begin{equation}
\label{eq:sdHallNumSet}
G^N_{U,M} = \vert  \{ \tilde{U} \overset{\perp}{\subset} N \; \vert \; \tilde{U} \simeq U, \;\; N /\!/ \tilde{U} \simeq_S M \} \vert.
\end{equation}
Let $\mathcal{G}^N_{U,M}= \vert \underline{\mathcal{G}}^N_{U,M} \vert$ and $a_S(M) = \vert Aut_S(M) \vert$, the number of isometries of $M$.

\begin{Lem}
\label{lem:diffHallNums}
The equality $\displaystyle G^N_{U,M}= \frac{\mathcal{G}^N_{U,M}}{a(U) a_S(M)}$ holds.
\end{Lem}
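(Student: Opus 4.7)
The plan is to exhibit a natural forgetful map
\[
\Phi: \underline{\mathcal{G}}^N_{U,M} \longrightarrow \{\tilde{U} \overset{\perp}{\subset} N \mid \tilde{U} \simeq U,\; N /\!/ \tilde{U} \simeq_S M\}
\]
sending a self-dual exact sequence $(E; i, j, k, \pi)$ to the equivalence class of the admissible monic $i$, viewed as an isotropic subobject of $N$. Since the equivalence relation on $\underline{\mathcal{G}}^N_{U,M}$ only involves isomorphisms of $E$ and leaves $U$, $N$, $M$, $i$ and $\pi$ untouched at the level of source/target, the morphism $i$ is constant on equivalence classes, so $\Phi$ is well-defined. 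Surjectivity is immediate from Proposition~\ref{prop:sdReduction}: any such $\tilde{U}$ fits in a diagram of the form \eqref{diag:sdExact}, and after composing with a fixed isometry to $(M, \psi_M)$ we obtain a preimage. It then suffices to prove that every fiber of $\Phi$ has cardinality $a(U) a_S(M)$.

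Fix therefore an isotropic $\tilde{U} \overset{\perp}{\subset} N$ with $\tilde{U} \simeq U$ and $N /\!/ \tilde{U} \simeq_S M$. A preimage is assembled from three independent pieces of data. First, one chooses an admissible monic $i: U \rightarrowtail N$ whose image is $\tilde{U}$; such $i$ correspond bijectively to isomorphisms $U \xrightarrow{\sim} \tilde{U}$, contributing a factor of $a(U)$. Second, once $i$ is fixed, a kernel $k: E \rightarrowtail N$ of $S(i)\psi_N$, together with the induced monic $j: U \rightarrowtail E$, is unique up to a unique isomorphism of $E$. Two diagrams arising from different kernel choices are related by such an isomorphism and are therefore equivalent, so this step contributes a factor of $1$.

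The third and only delicate factor is the choice of cokernel $\pi: E \twoheadrightarrow M$ of $j$ such that the self-dual structure induced on $M$ by diagram \eqref{diag:sdExact} is exactly $\psi_M$. Fixing one such cokernel $\pi_0$ with induced structure $\tilde{\psi}_0$, any other cokernel to $M$ has the form $\pi = \phi \pi_0$ with $\phi \in \text{Aut}(M)$. A direct inspection of the bottom-right square of \eqref{diag:sdExact} shows that replacing $\pi_0$ by $\phi \pi_0$ transforms the induced self-dual structure into $S(\phi)^{-1} \tilde{\psi}_0 \phi^{-1}$. The requirement that this equal $\psi_M$ is precisely the condition that $\phi$ be an isometry $(M, \psi_M) \xrightarrow{\sim} (M, \tilde{\psi}_0)$. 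By hypothesis $(M, \tilde{\psi}_0) \simeq_S (M, \psi_M)$, so the set of admissible $\phi$ is a torsor under $\text{Aut}_S(M, \psi_M)$ and hence has cardinality $a_S(M)$.

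Multiplying the three contributions yields $|\Phi^{-1}(\tilde{U})| = a(U) a_S(M)$, whence $\mathcal{G}^N_{U,M} = a(U) a_S(M) \cdot G^N_{U,M}$. The main technical obstacle is the bookkeeping in the third step: tracking how the canonical (only up to isometry) self-dual structure on the reduction transforms under a change of cokernel and identifying the resulting stabilizer with $\text{Aut}_S(M)$.
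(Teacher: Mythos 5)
Your proof is correct and takes essentially the same route as the paper: both count the fibers of the forgetful map $(E;i,j,k,\pi) \mapsto \im(i)$. The paper packages the fiber count as the statement that $Aut(U) \times Aut_S(M)$ acts freely on $\underline{\mathcal{G}}^N_{U,M}$ via $(g,h)\cdot(E;i,j,k,\pi) = (E;ig^{-1},jg^{-1},k,h\pi)$ with quotient the set of isotropic subobjects, whereas you unwind the same count into three explicit steps (choice of $i$, uniqueness of $(E,k,j)$ up to equivalence, choice of $\pi$). One small slip: the condition $S(\phi)^{-1}\tilde\psi_0\phi^{-1} = \psi_M$ rearranges to $S(\phi)\psi_M\phi = \tilde\psi_0$, which says $\phi$ is an isometry $(M,\tilde\psi_0)\to(M,\psi_M)$, not $(M,\psi_M)\to(M,\tilde\psi_0)$ as you wrote; the set of such $\phi$ is still a coset of $Aut_S(M,\psi_M)$, so the count $a_S(M)$ is unaffected.
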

\begin{proof}
The group $Aut(U) \times Aut_S(M)$ acts freely on $\underline{\mathcal{G}}^N_{U,M}$ by
\[
(g,h) \cdot (E;i,j,k,\pi) = (E;ig^{-1},jg^{-1},k,h\pi), \;\;\; (g,h) \in Aut(U) \times Aut_S(M).
\]
The map $(E;i,j,k,\pi) \mapsto  \im (i)$ induces a bijection from $\underline{\mathcal{G}}^N_{U,M} \slash Aut(U) \times Aut_S(M)$ to the set appearing on the right-hand side of equation \eqref{eq:sdHallNumSet}.
\end{proof}

\begin{Lem}
\label{lem:finiteSupport}
For fixed $U \in \mathcal{A}$ and $M \in \mathcal{A}_S$, the set $\underline{\mathcal{G}}^N_{U,M}$ is non-empty for at most finitely many $N \in Iso(\mathcal{A}_S)$.
\end{Lem}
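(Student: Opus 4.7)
The plan is to extract from any self-dual exact sequence representing a class in $\underline{\mathcal{G}}^N_{U,M}$ two ordinary short exact sequences in $\mathcal{A}$ that pin down $N$ up to finitely many possibilities. Reading off diagram \eqref{diag:sdExact} (with $N /\!/ U$ replaced by $M$), the top row supplies a short exact sequence
\[
U \overset{j}{\rightarrowtail} E \overset{\pi}{\twoheadrightarrow} M
\]
in $\mathcal{A}$, while the middle column supplies
\[
E \overset{k}{\rightarrowtail} N \overset{S(i)\psi_N}{\twoheadrightarrow} S(U).
\]
Since $U$, $M$, and hence $S(U)$ are fixed, the first of these constraints is exactly the hypothesis under which the finitariness of $\mathcal{A}$ applies.

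First, I would invoke finitariness on the top row to conclude that only finitely many isomorphism classes of $E$ arise. Then, for each such $E$, I would apply finitariness a second time to the middle column to conclude that only finitely many isomorphism classes of the underlying object $N$ arise. This bounds the number of candidates for $N$ as an object of $\mathcal{A}$.

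Finally, I would pass from isomorphism classes of underlying objects to isometry classes in $Iso(\mathcal{A}_S)$. Fix an isomorphism class of $N$; any self-dual structure on $N$ is an element of $\Hom(N,S(N))$, which is finite by finitariness. Hence at most finitely many $\psi_N$, and a fortiori at most finitely many isometry classes $(N,\psi_N)$, can be supported on that underlying $N$. Combining the three finiteness statements yields the claim.

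The argument is essentially a direct application of the finitariness hypothesis, so no serious obstacle is anticipated; the only point requiring care is the last step, where one must remember that isometry is stronger than isomorphism, and verify that each underlying $N$ still carries only finitely many self-dual structures. This is immediate since $\Hom(N,S(N))$ is finite.
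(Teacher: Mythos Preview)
Your proof is correct and follows essentially the same approach as the paper: bound the isomorphism types of $E$ from the top row, then of $N$ from the middle column, then use $\Hom$-finiteness to bound the self-dual structures on each such $N$. The paper's proof is just a terser statement of exactly this argument.
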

\begin{proof}
Since $\mathcal{A}$ is finitary, for fixed $U$ and $M$ there only finitely many isomorphism types of $E$, and hence $N$, that can appear in diagram \eqref{diag:sdExact}. By $\Hom$-finiteness, any such $N$ admits at most finitely many self-dual structures.
\end{proof}

Let $\mathcal{M}_{\mathcal{A}}$ be the free $R$-module with basis $Iso(\mathcal{A}_S)$,
\[
\displaystyle \mathcal{M}_{\mathcal{A}}= \bigoplus_{M \in Iso(\mathcal{A}_S)} R [M].
\]
The next result defines a $\mathcal{H}_{\mathcal{A}}$-(co)module structure on $\mathcal{M}_{\mathcal{A}}$, which we call the Hall module of $\mathcal{A}$. For now, take $c=0$ in equation \eqref{eq:hallMult}.

\begin{Thm}
\label{thm:hallModuleExact}
The formulae
\[
[U] \star [M] = \sum_{N \in Iso(\mathcal{A}_S)} G^N_{U,M} [N]
\]
and
\[
\rho[N] = \sum_{U \in Iso(\mathcal{A})} \sum _{M \in Iso(\mathcal{A}_S)} \frac{a(U) a_S(M)}{a_S(N)} G^N_{U,M} [U] \otimes [M]
\]
make $\mathcal{M}_{\mathcal{A}}$ a left $\mathcal{H}_{\mathcal{A}}$-module and topological left $\mathcal{H}_{\mathcal{A}}$-comodule.
\end{Thm}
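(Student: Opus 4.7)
The plan is to verify the module axioms for $\star$ and the comodule axioms for $\rho$ directly from the definitions. Well-definedness is essentially already established: for fixed $U$ and $M$, Lemma \ref{lem:finiteSupport} bounds the support of $[U] \star [M]$, while $\rho[N]$ is by design allowed to have infinite support in the topological completion. The unit axiom $[0] \star [M] = [M]$ is immediate from $G^N_{0,M} = \delta_{[N],[M]}$, the only isotropic monic out of the zero object being the canonical one with reduction equal to $M$ itself. Using the counit $\epsilon[U] = \delta_{[U],0}$ of $\mathcal{H}_{\mathcal{A}}$, the same observation together with $a(0) = 1$ shows $(\epsilon \otimes 1)\rho[N] = [N]$.

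The essential content is associativity of $\star$ and coassociativity of $\rho$. Expanding the definitions (with $c=0$, so no $\nu$-twisting factor appears), the coefficient of $[N]$ in $([U][V]) \star [M]$ is $\sum_W F^W_{U,V}\, G^N_{W,M}$, whereas the coefficient in $[U] \star ([V] \star [M])$ is $\sum_X G^N_{U,X}\, G^X_{V,M}$. A parallel expansion of $(\Delta \otimes 1)\rho[N]$ and $(1 \otimes \rho)\rho[N]$ attaches to each basis vector $[A] \otimes [B] \otimes [M]$ the common weight $\tfrac{a(A)a(B)a_S(M)}{a_S(N)}$ times the same two expressions (with $(U,V) = (A,B)$). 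Hence the entire theorem reduces to the single combinatorial identity
\[
\sum_{W \in Iso(\mathcal{A})} F^W_{U,V}\, G^N_{W,M} \;=\; \sum_{X \in Iso(\mathcal{A}_S)} G^N_{U,X}\, G^X_{V,M}
\]
for all $U, V \in \mathcal{A}$ and $M, N \in \mathcal{A}_S$.

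To prove this identity I would exhibit a bijection between the underlying configurations. The left side counts pairs $(\tilde{U}, \tilde{W})$ where $\tilde{W} \overset{\perp}{\rightarrowtail} N$ is isotropic with $N /\!/ \tilde{W} \simeq_S M$, and $\tilde{U} \rightarrowtail \tilde{W}$ is an admissible subobject with $\tilde{U} \simeq U$ and $\tilde{W}/\tilde{U} \simeq V$. The right side counts pairs $(\tilde{U}, \bar{V})$ where $\tilde{U} \overset{\perp}{\rightarrowtail} N$ is isotropic with $\tilde{U} \simeq U$, and $\bar{V} \overset{\perp}{\rightarrowtail} N /\!/ \tilde{U}$ is isotropic with $\bar{V} \simeq V$ and $(N /\!/ \tilde{U}) /\!/ \bar{V} \simeq_S M$. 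The candidate bijection sends $(\tilde{U}, \tilde{W})$ to $(\tilde{U}, \tilde{W}/\tilde{U})$, where $\tilde{W}/\tilde{U}$ is viewed inside $\tilde{U}^{\perp}/\tilde{U} = N /\!/ \tilde{U}$ via the correspondence theorem for admissible subobjects; the inverse lifts an admissible subobject of $N /\!/ \tilde{U}$ to an admissible subobject of $\tilde{U}^{\perp}$ containing $\tilde{U}$.

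The technical crux, and what I expect to be the main obstacle, is a self-dual avatar of the third isomorphism theorem: given admissible monics $\tilde{U} \rightarrowtail \tilde{W} \rightarrowtail N$ with $\tilde{U}$ isotropic in $N$, one must show that $\tilde{W}$ is isotropic in $N$ if and only if $\tilde{W}/\tilde{U}$ is isotropic in $N /\!/ \tilde{U}$, and that in this case there is a canonical isometry $(N /\!/ \tilde{U}) /\!/ (\tilde{W}/\tilde{U}) \simeq_S N /\!/ \tilde{W}$. I would attempt this via a diagram chase juxtaposing the reduction diagram \eqref{diag:sdExact} applied to $\tilde{U} \subset N$ with the one applied to $\tilde{W} \subset N$, linking them through the induced admissible monic $\tilde{W}/\tilde{U} \rightarrowtail \tilde{U}^{\perp}/\tilde{U}$; the existence and uniqueness (up to isometry) of the reduction datum from Proposition \ref{prop:sdReduction} should force the required compatibility. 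Once this is in place, verifying that the bijection preserves the prescribed isomorphism and isometry types is routine, and the theorem follows.
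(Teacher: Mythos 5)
Your proposal follows essentially the same route as the paper: reduce both associativity of $\star$ and coassociativity of $\rho$ to the single identity $\sum_W F^W_{U,V}\,G^N_{W,M} = \sum_X G^N_{U,X}\,G^X_{V,M}$, then interpret this via the bijection $(\tilde U,\tilde W) \leftrightarrow (\tilde U, \tilde W/\tilde U)$, whose correctness is the self-dual isomorphism theorem for isotropic reduction (the paper calls it the ``self-dual Second Isomorphism theorem'' and cites Quebbemann--Scharlau--Schulte \cite[Proposition 6.5]{quebbemann1979} for abelian categories, extending to exact categories by checking admissibility, rather than carrying out the diagram chase from scratch as you propose). The one point you elide that the paper handles explicitly is that the composite $(1\otimes\rho)\circ\rho$ (and likewise $(\Delta\otimes 1)\circ\rho$) is well-defined into the triple completion, i.e.\ that the coefficient of each $[U_1]\otimes[U_2]\otimes[M]$ is a finite sum --- this requires invoking Lemma \ref{lem:finiteSupport} once more, not merely the fact that $\rho$ itself lands in the completion.
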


\begin{proof}
Lemmas \ref{lem:diffHallNums} and \ref{lem:finiteSupport} imply that the above formulae are well-defined. A direct calculation shows that the $\mathcal{H}_{\mathcal{A}}$-action is associative if and only if
\begin{equation}
\label{eq:associativity}
\sum_{W \in Iso(\mathcal{A})} F^W_{U,V} G^N_{W,M} = \sum_{P \in Iso(\mathcal{A}_S)} G^N_{U, P} G^P_{V,M}, \;\;\;\;\;\; U, V \in \mathcal{A}, \; M,N \in \mathcal{A}_S.
\end{equation}
Interpreting this equation in terms of isotropic filtrations shows that it is equivalent to a self-dual version of the Second Isomorphism theorem. Precisely, this theorem says that, for fixed $U \overset{\perp}{\subset} N$, the map $V \mapsto V \slash U$ defines a bijection
\[
\{ V \overset{\perp}{\subset} N \;  \vert \; U \subset V \} \longleftrightarrow \{ \tilde{V} \overset{\perp}{\subset} N /\!/ U  \}
\]
satisfying
\[
(N /\!/ U) /\!/ (V \slash U) \simeq_S N /\!/ V.
\]
The self-dual Second Isomorphism theorem is proved in \cite[Proposition 6.5]{quebbemann1979} for abelian categories. The same argument applies to exact categories once admissibility of all subobjects involved is verified, which is straightforward.

Turning to the comodule structure, we first show that the composition
\[
(1 \otimes \rho) \circ \rho : \mathcal{M}_{\mathcal{A}} \rightarrow \mathcal{H}_{\mathcal{A}} \hat{\otimes} \mathcal{H}_{\mathcal{A}} \hat{\otimes} \mathcal{M}_{\mathcal{A}}
\]
is well-defined, the completion consisting again of all formal linear combinations. The map $(\Delta \otimes 1) \circ \rho$ is dealt with similarly. For any $\xi \in \mathcal{M}_{\mathcal{A}}$, the terms in $\rho(\xi)$ contributing to the coefficient of $[U_1] \otimes [U_2] \otimes [M]$ in $(1 \otimes \rho) \circ \rho (\xi)$ are proportional to $[U_1] \otimes [N]$ where $N /\!/ U_2 \simeq_S M$, and are therefore finite in number by Lemma \ref{lem:finiteSupport}. A direct calculation now shows that coassociativity is equivalent to equation \eqref{eq:associativity}, which has already been established.
\end{proof}

We now introduce a generalized grading on the Hall module. Recall that $N \in \mathcal{A}_S$ is called metabolic if it contains a Lagrangian, i.e. an isotropic subobject $U$ with $U^{\perp} = U$. For example, the hyperbolic object on $U \in \mathcal{A}$, defined as
\[
H(U)= \left( U \oplus S(U), \left( \begin{smallmatrix} 0 & 1_{S(U)} \\ \Theta_U & 0 \end{smallmatrix} \right) \right) \in \mathcal{A}_S,
\]
is metabolic, having both $U$ and $S(U)$ as Lagrangians.

\begin{Def}[see \cite{balmer2005}]
\begin{enumerate}[leftmargin=0cm,itemindent=.6cm,labelwidth=\itemindent,labelsep=0cm,align=left]
\item The Grothendieck-Witt group $GW(\mathcal{A})$ is the Grothendieck group of $Iso(\mathcal{A}_S)$ modulo the relation $\vert N \vert = \vert H(U) \vert$ whenever $N$ is metabolic with Lagrangian $U$.

\item The Witt group $W(\mathcal{A})$ is the abelian monoid $Iso(\mathcal{A}_S)$ modulo the submonoid of metabolic objects.
\end{enumerate}
\end{Def}

The groups $GW(\mathcal{A})$ and $W(\mathcal{A})$ give two $R$-module decompositions of $\mathcal{M}_{\mathcal{A}}$,
\[
\mathcal{M}_{\mathcal{A}} = \bigoplus_{\gamma \in GW(\mathcal{A})} \mathcal{M}_{\mathcal{A}}(\gamma), \;\;\;\;\;\; \mathcal{M}_{\mathcal{A}} = \bigoplus_{w \in W(\mathcal{A})} \mathcal{M}_{\mathcal{A}}(w),
\]
where $\mathcal{M}_{\mathcal{A}}(\gamma)$ is spanned by self-dual objects of class $\gamma \in GW(\mathcal{A})$ and similarly for $\mathcal{M}_{\mathcal{A}}(w)$. Denote by 
\[
H:K(\mathcal{A}) \rightarrow GW(\mathcal{A})
\]
the group homomorphism defined at the level of objects by $U \mapsto H(U)$.

\begin{Prop}
\label{prop:wittGradings}
The homomorphism $H$ intertwines the $K(\mathcal{A})$ and $GW(\mathcal{A})$-gradings of $\mathcal{H}_{\mathcal{A}}$ and $\mathcal{M}_{\mathcal{A}}$: for all $\alpha \in K(\mathcal{A})$ and $\gamma \in GW(\mathcal{A})$
\[
\mathcal{H}_{\mathcal{A}}(\alpha) \star \mathcal{M}_{\mathcal{A}}(\gamma) \subset \mathcal{M}_{\mathcal{A}}(H(\alpha) + \gamma).
\]
Moreover, for each $w \in W(\mathcal{A})$, $\mathcal{M}_{\mathcal{A}}(w) \subset \mathcal{M}_{\mathcal{A}}$ is an $\mathcal{H}_{\mathcal{A}}$-submodule.  Analogous statements hold for the comodule structure.
\end{Prop}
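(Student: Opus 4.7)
\begin{proofsketch}
My plan is to reduce the entire proposition to a single auxiliary identity: for every self-dual exact sequence $U \overset{i}{\rightarrowtail} N \overset{\pi}{\dashtwoheadrarrow} M$,
\[
[N] = H([U]) + [M] \quad \text{in } GW(\mathcal{A}).
\]
Granting this, both parts follow at once from the formulas in Theorem \ref{thm:hallModuleExact}. The sum defining $[U] \star [M]$ is supported on those $N$ admitting such a sequence, so every $[N]$ that appears lies in $\mathcal{M}_{\mathcal{A}}(H([U]) + [M])$; passing to $W(\mathcal{A})$ collapses $H([U])$ to zero (the hyperbolic object $H(U)$ is metabolic with Lagrangian $U$) and gives the Witt submodule statement. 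The comodule statement is the same identity read in reverse: every $[U] \otimes [M]$ occurring in $\rho[N]$ with nonzero coefficient arises from a self-dual exact sequence and therefore satisfies $H([U]) + [M] = [N]$ in $GW(\mathcal{A})$, and $[M] = [N]$ in $W(\mathcal{A})$.

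To prove the auxiliary identity I would exhibit two metabolic relations in $GW(\mathcal{A})$. Write $M^- := (M, -\psi_M)$, which is again a self-dual object. The first relation is that $E := U^\perp$ embeds in $N \perp M^-$ as a Lagrangian via $(k, \pi)$, using the notation of diagram \eqref{diag:sdExact}. Isotropy is immediate from Proposition \ref{prop:sdReduction}: the restriction of $\psi_N$ to $E$ factors through $\pi$ as $\psi_M$, so the two summand contributions cancel. Lagrangianness, i.e. $E^\perp = E$ inside $N \perp M^-$, is verified by an elementary diagram chase: an element $(n,m)$ lies in $E^\perp$ iff $S(k)\psi_N(n) = S(\pi)\psi_M(m)$, which forces $n$ into $\ker S(i)\psi_N = E$ (via $k$) and then determines $m = \pi(e)$ for the preimage $e$. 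This yields
\[
[N] + [M^-] = [H(E)] = H([E]) = H([U]) + H([M])
\]
in $GW(\mathcal{A})$, using that $H$ is a homomorphism and $[E] = [U] + [M]$ in $K(\mathcal{A})$.

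The second metabolic relation is that the diagonal $\Delta: M \rightarrowtail M \perp M^-$ is Lagrangian, which is immediate since $\psi_M$ is an isomorphism; hence $[M] + [M^-] = [H(M)]$ in $GW(\mathcal{A})$. Subtracting the two identities cancels both $[M^-]$ and $H([M])$, leaving $[N] = H([U]) + [M]$, as required.

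The main technical obstacle I anticipate is the Lagrangian verification for $(k, \pi)$: one cannot rely on a naive ``rank count'' in a general exact category, because an object of trivial class in $K(\mathcal{A})$ need not vanish, so the explicit kernel-image chase using the defining property of $k$ is unavoidable. Everything else in the proof---$R$-linearity, the definitions of the two gradings, the passage from $GW$ to $W$, and the comodule case---is routine bookkeeping.
\end{proofsketch}
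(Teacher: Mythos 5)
Your proof is correct and follows the same overall reduction as the paper: both derive the proposition from the $GW(\mathcal{A})$-identity $|N| = H(|U|) + |M|$ for a self-dual exact sequence $U \rightarrowtail N \dashtwoheadrightarrow M$, and both obtain the Witt-grading statement by noting that $H(|U|)$ dies in $W(\mathcal{A})$ (the paper phrases this via the exact sequence $K(\mathcal{A}) \xrightarrow{H} GW(\mathcal{A}) \to W(\mathcal{A}) \to 0$, which is the same observation). The difference is that the paper simply cites this identity from Quebbemann--Scharlau--Schulte \cite{quebbemann1979}, whereas you reprove it from scratch via two metabolic relations --- $E = U^\perp$ as a Lagrangian of $N \perp M^-$ through $(k,\pi)$, and the diagonal as a Lagrangian of $M \perp M^-$ --- and subtract. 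Your Lagrangian verification for $(k,\pi)$ is sound: isotropy is exactly the commutativity of the middle square of diagram \eqref{diag:sdExact} (namely $S(k)\psi_N k = S(\pi)\psi_M \pi$), and the chase showing $E^\perp \subset E$ uses $S(j)S(k)\psi_N = S(i)\psi_N$ (bottom-right square) together with $\pi j = 0$ to force $S(i)\psi_N n = 0$, then injectivity of $S(\pi)\psi_M$ to recover $m = \pi e$; admissibility of $(k,\pi)$ follows since it factors as a split monic followed by $k \oplus 1_M$. So you have produced a self-contained proof of the input the paper treats as black-box, at the cost of about a paragraph of diagram-chasing; the trade-off is purely one of length versus reliance on the reference.

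Two minor points of hygiene: when you write ``$(n,m)\in E^\perp$'' you are implicitly invoking generalized elements or a Yoneda-style argument, which is fine but worth flagging in a category that need not be abelian; and you should briefly note why $(k,\pi)$ is an \emph{admissible} isotropic subobject (as above), since that is part of the definition of metabolic.
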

\begin{proof}
The first statement follows from that fact that if $U \overset{\perp}{\subset} N$, then in $GW(\mathcal{A})$ we have the identity
\[
\vert N \vert= \vert N /\!/ U \vert + \vert H(U) \vert.
\]
See \cite{quebbemann1979}. The second statement now follows from the exact sequence of abelian groups
\begin{equation}
\label{eq:GWexact}
K(\mathcal{A}) \xrightarrow[]{H} GW(\mathcal{A}) \rightarrow W(\mathcal{A}) \rightarrow 0.
\end{equation}
\end{proof}

To extend Theorem \ref{thm:hallModuleExact} to $c$-twisted Hall algebras, suppose that we are given a function $\tilde{c}: GW(\mathcal{A}) \times K(\mathcal{A}) \rightarrow \mathbb{Z}$ satisfying, for all $\alpha,\beta \in K(\mathcal{A})$ and $\gamma \in GW(\mathcal{A})$,
\begin{equation}
\label{eq:twistCocycle}
c(\alpha, \beta) + \tilde{c}(\gamma, \alpha + \beta) = \tilde{c}(\gamma, \alpha) + \tilde{c}(\gamma + H(\alpha), \beta).
\end{equation}
This guarantees that the twisted action
\[
[U] \star [M] = \nu^{\tilde{c}(M,U)}\sum_{N \in Iso(\mathcal{A}_S)} G^N_{U,M} [N]
\]
makes $\mathcal{M}_{\mathcal{A}}$ a module over the $c$-twisted Hall algebra. The comodule structure is similarly modified.

In this paper we are primarily interested in the Ringel-Hall algebra. We therefore seek a function $\tilde{c}$ compatible with $c= - \langle \cdot, \cdot \rangle$. For each $U \in \mathcal{A}$ and $i \geq 0$, the pair $(S, \Theta)$ generates a linear action of $\mathbb{Z}_2$ on $\Ext^i(S(U),U)$. We write $\Ext^i(S(U),U)^{ p S}$ for the subspace of symmetric ($p=1$) or skew-symmetric ($p=-1$) extensions with respect to this action.

\begin{Thm}
\label{thm:descendtoGroth}
Let $\mathcal{A}$ be a $k$-linear abelian category of finite homological dimension with finite dimensional  $\Ext^i$-groups. Then the function $\mathcal{E}: Iso(\mathcal{A}) \rightarrow \mathbb{Z}$ given by
\[
\mathcal{E}(U) = \sum_{i \geq 0} (-1)^i \dim_k \Ext^i(S(U),U)^{(-1)^{i+1} S}
\]
descends to $K(\mathcal{A})$. Moreover, the function
\[
\tilde{c}(M,U) = -\langle M,U \rangle - \mathcal{E}(U)
\]
satisfies equation \eqref{eq:twistCocycle} with $c = -\langle \cdot, \cdot \rangle$.
\end{Thm}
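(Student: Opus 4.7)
The plan is to reduce both claims to a single quadratic identity for $\mathcal{E}$. Substituting the candidate $\tilde{c}(\gamma, \alpha) = -\langle \gamma, \alpha \rangle - \mathcal{E}(\alpha)$, where $\gamma \in GW(\mathcal{A})$ is viewed in $K(\mathcal{A})$ through the natural forgetful map (under which $H(\alpha) \mapsto \alpha + S(\alpha)$), into \eqref{eq:twistCocycle} with $c = -\langle \cdot, \cdot \rangle$ and using the bilinearity of the Euler form, a short bookkeeping shows that the cocycle condition is equivalent to the single identity
\[
\mathcal{E}(\alpha + \beta) = \mathcal{E}(\alpha) + \mathcal{E}(\beta) + \langle S(\alpha), \beta \rangle, \qquad \alpha, \beta \in K(\mathcal{A}).
\]
Establishing this identity on classes of objects is therefore the main task; descent of $\mathcal{E}$ to $K(\mathcal{A})$ will then follow from the special case of short exact sequences.

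First I would handle the direct-sum case. Decomposing $\Ext^i(S(U \oplus V), U \oplus V)$ into four blocks $\Ext^i(S(X), Y)$ with $X, Y \in \{U, V\}$, the involution induced by $(S, \Theta)$ preserves each diagonal block, acting on it by its own natural $\mathbb{Z}_2$-action, and swaps the two off-diagonal blocks $\Ext^i(S(U), V) \leftrightarrow \Ext^i(S(V), U)$ via $\Theta$. Since a swap action on $W \oplus W$ has $\pm$-eigenspaces each of dimension $\dim W$, one obtains
\[
\dim \Ext^i(S(U \oplus V), U \oplus V)^{p S} = \dim \Ext^i(S(U), U)^{pS} + \dim \Ext^i(S(V), V)^{pS} + \dim \Ext^i(S(U), V)
\]
for both signs $p = \pm 1$. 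Substituting into the defining formula for $\mathcal{E}$ and identifying $\sum_i (-1)^i \dim \Ext^i(S(U), V)$ with $\langle S(U), V \rangle$ yields the identity for $\alpha = [U]$, $\beta = [V]$.

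The main work is extending the identity to a general short exact sequence $0 \to U' \to U \to U'' \to 0$. Exactness of $S$ provides a dual sequence $0 \to S(U'') \to S(U) \to S(U') \to 0$, and applying $\Hom$ in both arguments yields a $3 \times 3$ grid of long exact sequences among the Ext groups $\Ext^i(S(X), Y)$ for $X, Y \in \{U', U, U''\}$. Ordinary Euler-characteristic additivity in this grid, together with the $S$-invariance $\langle S(U''), U' \rangle = \langle S(U'), U'' \rangle$ (coming from the isomorphism $\Ext^i(A, B) \cong \Ext^i(S(B), S(A))$), recovers the identity $\langle S(U), U \rangle = \langle S(U'), U' \rangle + 2 \langle S(U'), U'' \rangle + \langle S(U''), U'' \rangle$. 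To lift this to the $\mathbb{Z}_2$-equivariant refinement, I would construct a two-step filtration on $\Ext^\bullet(S(U), U)$ compatible with the involution $\sigma$ whose associated graded is $\Ext^\bullet(S(U'), U') \oplus \Ext^\bullet(S(U''), U'') \oplus \bigl(\Ext^\bullet(S(U''), U') \oplus \Ext^\bullet(S(U'), U'')\bigr)$, the first two summands carrying their intrinsic $\mathbb{Z}_2$-actions and the third a swap action exactly as in the direct-sum case. Because the base field has characteristic different from two, Maschke's theorem guarantees that $\pm$-eigenspace dimensions are additive along this filtration, so the direct-sum computation of the previous paragraph transfers verbatim. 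The hardest step will be making the compatibility of $\sigma$ with the filtration fully rigorous; this is best handled by choosing projective and injective resolutions of $S(U)$ and $U$ that respect both the two-step subobject/quotient filtrations and the self-duality, and then tracking the induced action on the resulting filtered $\Hom^\bullet$ complex.

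Combining the direct-sum identity with the short-exact-sequence identity gives $\mathcal{E}(U) = \mathcal{E}(U' \oplus U'')$ for every short exact sequence $0 \to U' \to U \to U'' \to 0$, so $\mathcal{E}(U)$ depends only on the class $[U] \in K(\mathcal{A})$. This is the descent claim, and the cocycle condition then follows from the reduction in the first paragraph applied to classes of objects.
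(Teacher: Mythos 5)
Your first two paragraphs are sound and recover what the paper does at the end of its proof: the cocycle condition with $c = -\langle\cdot,\cdot\rangle$ and $\tilde{c}(\gamma,\alpha) = -\langle\gamma,\alpha\rangle - \mathcal{E}(\alpha)$ is equivalent to $\mathcal{E}(\alpha+\beta) = \mathcal{E}(\alpha)+\mathcal{E}(\beta)+\langle S(\alpha),\beta\rangle$, and the block decomposition of $\Ext^i(S(U\oplus V),U\oplus V)$ with the swap action on the off-diagonal pair gives this identity for direct sums.

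The gap is in the third paragraph. You assert the existence of a two-step filtration \emph{on the cohomology} $\Ext^\bullet(S(U),U)$ whose associated graded is $\Ext^\bullet(S(U'),U')\oplus\Ext^\bullet(S(U''),U'')\oplus\bigl(\Ext^\bullet(S(U''),U')\oplus\Ext^\bullet(S(U'),U'')\bigr)$. This is not true in general: filtering at the chain level yields a spectral sequence whose $E_1$-page consists of those four blocks, but the filtration induced on the actual $\Ext$-groups has associated graded equal to the $E_\infty$-page, which can differ from $E_1$ when the differentials are nonzero. So there is in general no filtration on the $\Ext$-groups themselves of the form you describe, and the claim that the direct-sum computation ``transfers verbatim'' degree by degree is not available. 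What \emph{does} survive the spectral sequence is the virtual $\mathbb{Z}_2$-character, i.e.\ the alternating sum of the traces of the involution --- and since $\mathcal{E}$ is exactly such an alternating sum, a carefully executed chain-level version of your argument would still close the proof. But that version would also need $\mathcal{A}$ to have enough projectives and injectives in order to produce the filtered $\Hom^\bullet$ complex you invoke, an assumption that is not in the hypotheses of the theorem.

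The paper sidesteps both issues by working entirely with long exact sequences of $\Ext$-groups (Yoneda $\Ext$, no resolutions): applying $\Hom(-,-)$ to the short exact sequence and its $S$-dual produces six long exact sequences that interlock into a bounded grid with anticommuting squares, the total complex of that grid is exact, and the generator of $\mathbb{Z}_2$ acts on it by signed $S$ so that the vanishing of the virtual character directly gives $0 = \mathcal{E}(U') - \langle S(U'),U\rangle + \langle S(U'),U''\rangle + \langle S(U),U\rangle - \mathcal{E}(U) - \langle S(U),U''\rangle + \mathcal{E}(U'')$, which rearranges to the desired identity. This makes the alternating-sum nature of the conclusion manifest and needs no degeneration hypothesis and no resolutions. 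If you want to keep your filtration language, you should phrase it at the level of a filtered chain complex and conclude only the Euler-characteristic identity, and either add the ``enough projectives/injectives'' hypothesis or replace the chain-level construction by the six-long-exact-sequences argument.
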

\begin{proof}
Applying the bifunctor $\Hom(-, -)$ to the short exact sequence
\[
0 \rightarrow U \rightarrow W \rightarrow V \rightarrow 0
\]
and its dual gives six long exact sequences fitting into the following diagram:
\begin{center}
\begin{tikzpicture}[descr/.style={fill=white,inner sep=1.3pt}]
        \matrix (m) [
            matrix of math nodes,
            row sep=1em,
            column sep=2.5em,
            text height=1.5ex, text depth=0.25ex
        ]
        {  & 0 & 0 & 0 & \hbox{} \\
        	 0 & \Hom(S(U),U) & \Hom(S(U),W) & \Hom(S(U),V) & \hbox{} \\
           0 & \Hom(S(W),U) & \Hom(S(W),W) & \Hom(S(W),V) & \\
           0 & \Hom(S(V),U) & \Hom(S(V),W) & \Hom(S(V),V) & \\
           & \Ext^1(S(U),U) & \Ext^1(S(U),W) & \Ext^1(S(U),V) & \\
           & \Ext^1(S(W),U) & \Ext^1(S(W),W) & \Ext^1(S(W),V) & \\
           & \vdots & \vdots & \vdots & \\
        };
        \path[->]
        (m-2-1) edge (m-2-2)
        (m-2-2) edge (m-2-3)
        (m-2-3) edge (m-2-4)
        (m-3-1) edge (m-3-2)
        (m-3-2) edge (m-3-3)
        (m-3-3) edge (m-3-4)
        (m-4-1) edge (m-4-2)
        (m-4-2) edge (m-4-3)
        (m-4-3) edge (m-4-4)        
        (m-5-2) edge (m-5-3)
        (m-5-3) edge (m-5-4)
        (m-6-2) edge (m-6-3)
        (m-6-3) edge (m-6-4)        
        
        (m-1-2) edge (m-2-2)
        (m-2-2) edge (m-3-2)
        (m-3-2) edge (m-4-2)
        (m-4-2) edge (m-5-2)
        (m-5-2) edge (m-6-2)
        (m-6-2) edge (m-7-2)
        (m-1-3) edge (m-2-3)
        (m-2-3) edge node [left] {\footnotesize $-$ }   (m-3-3)
        (m-3-3) edge node [left] {\footnotesize $-$ }   (m-4-3)
        (m-4-3) edge node [left] {\footnotesize $-$ }   (m-5-3)
        (m-5-3) edge node [left] {\footnotesize $-$ }   (m-6-3)
        (m-1-4) edge (m-2-4)
        (m-2-4) edge (m-3-4)
        (m-3-4) edge (m-4-4)
        (m-4-4) edge (m-5-4)
        (m-5-4) edge (m-6-4)
        (m-6-4) edge (m-7-4) 
        (m-6-3) edge (m-7-3);
        
\draw [->, rounded corners] (m-2-4) -- node [above] {\footnotesize $-$ }  +(2,0) -- +(2,-0.3) -- +(0,-0.3) -- +(-9.55,-0.3) -- +(-9.55,-2.5) -- +(-8.25,-2.5);
\draw [->, rounded corners] (m-3-4) --  +(2,0) -- +(2,-0.3) -- +(0,-0.3) -- +(-9.75,-0.3) -- +(-9.75,-2.5) -- +(-8.25,-2.5);
\draw [-, rounded corners] (m-4-4) -- +(2,0) -- +(2,-0.3) -- +(1,-0.3);
\draw [dotted,thick] (m-4-4) +(1,-0.3) -- +(0.5,-0.3);
\draw [-, rounded corners] (m-5-4) -- +(2,0) -- +(2,-0.3) -- +(1,-0.3);
\draw [-, rounded corners] (m-6-4) -- node [above] {\footnotesize $-$ }  +(2,0) -- +(2,-0.3) -- +(1,-0.3);
\draw [dotted,thick] (m-5-4) +(1,-0.3) -- +(0.5,-0.3);
\draw [dotted,thick] (m-6-4) +(1,-0.3) -- +(0.5,-0.3);
\end{tikzpicture}
\end{center}
The minus signs, indicating that negatives of the canonical maps are used, ensure that each square of the diagram anti-commutes. Consider the total complex, obtained by summing over the diagonal. Its first few terms are
\[
0 \rightarrow \Hom(S(U),U) \rightarrow \begin{array}{c} \Hom(S(U),W) \\ \oplus \\ \Hom(S(W),U) \end{array} \rightarrow \begin{array}{c} \Hom(S(U),V) \\ \oplus \\ \Hom(S(W),W) \\ \oplus \\ \Hom(S(V),U)  \end{array}  \rightarrow \cdots
\]
There is an action of $\mathbb{Z}_2$ on the total complex, commuting with all differentials, defined by letting the generator act by $(-1)^iS$ on $\Ext^i(S(W),W)$ and by $(-1)^{i+1}S$ on the remaining $i$th extension groups. Viewed as a virtual representation of $\mathbb{Z}_2$, the character of the total complex is zero. This implies the following relation between virtual dimensions of $\mathbb{Z}_2$-invariants,
\[
0 = \mathcal{E}(U) - \langle S(U), W \rangle  + \langle S(U),V \rangle + \left( \langle S(W),W \rangle - \mathcal{E}(W) \right) - \langle S(W),V \rangle + \mathcal{E}(V),
\]
which can be rewritten as 
\begin{equation}
\label{eq:kIden}
\mathcal{E}(W) = \mathcal{E}(U) + \mathcal{E}(V) + \langle S(U),V \rangle.
\end{equation}
The right-hand side of this equation is equal to $\mathcal{E}(U \oplus V)$, proving that $\mathcal{E}$ descends to $K(\mathcal{A})$.

Finally, it is straightforward to verify equation \eqref{eq:twistCocycle} using equation \eqref{eq:kIden}.
\end{proof}

\begin{Def}
With $c$ and $\tilde{c}$ as in Theorem \ref{thm:descendtoGroth}, $\mathcal{M}_{\mathcal{A}}$ is called the Ringel-Hall module.
\end{Def}

We have the following analogue of Green's bilinear form.

\begin{Lem}
\label{lem:sdGreenForm}
The $R$-valued symmetric bilinear form on $\mathcal{M}_{\mathcal{A}}$ defined by
\[
( [M], [N] )_{\mathcal{M}} = \frac{\delta_{M,N}}{a_S(M)}
\]
is non-degenerate and satisfies
\[
( x \otimes \xi , \rho( \zeta) )_{\mathcal{H} \otimes \mathcal{M}}  =  ( x \star \xi , \zeta )_{\mathcal{M}}, \;\;\;\ x \in \mathcal{H}_{\mathcal{A}}, \; \xi, \zeta \in \mathcal{M}_{\mathcal{A}}.
\]
\end{Lem}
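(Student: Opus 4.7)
The plan is to reduce both statements to direct verifications on the basis $Iso(\mathcal{A}_S)$, using only the explicit formulae for $\star$, $\rho$, and the twist $\tilde{c}$.

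For non-degeneracy, I observe that $(\cdot,\cdot)_{\mathcal{M}}$ is diagonal in the basis $\{[M]\}_{M\in Iso(\mathcal{A}_S)}$, with nonzero diagonal entries $1/a_S(M)\in R$ (which are units in the fraction field of $R$ and in particular nonzero). Hence the Gram matrix is diagonal with nonzero entries and the form is non-degenerate.

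For the adjointness identity, by bilinearity it suffices to evaluate both sides on homogeneous basis elements $x = [U]\in\mathcal{H}_{\mathcal{A}}$, $\xi = [M]$ and $\zeta=[N]$ in $\mathcal{M}_{\mathcal{A}}$. On the right-hand side, the formula for the twisted action gives
\[
([U]\star[M],[N])_{\mathcal{M}} \;=\; \nu^{\tilde c(M,U)}\sum_{N'} G^{N'}_{U,M}\, ([N'],[N])_{\mathcal{M}} \;=\; \nu^{\tilde c(M,U)}\,\frac{G^N_{U,M}}{a_S(N)}.
\]
On the left-hand side, expanding the (twisted) coaction on $[N]$ and using that $(\cdot,\cdot)_{\mathcal{H}\otimes\mathcal{M}}$ is diagonal, only the term indexed by $(U,M)$ contributes, yielding
\[
([U]\otimes[M],\rho[N])_{\mathcal{H}\otimes\mathcal{M}} \;=\; \nu^{\tilde c(M,U)}\,\frac{a(U)\,a_S(M)}{a_S(N)}\,G^N_{U,M}\cdot\frac{1}{a(U)}\cdot\frac{1}{a_S(M)} \;=\; \nu^{\tilde c(M,U)}\,\frac{G^N_{U,M}}{a_S(N)}.
\]
The two expressions agree, which proves the identity.

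There is no real obstacle here: the construction of $\rho$ (specifically, the weight $a(U)a_S(M)/a_S(N)$) is precisely rigged so that the automorphism factors in $(\cdot,\cdot)_{\mathcal{H}}$ and $(\cdot,\cdot)_{\mathcal{M}}$ cancel against it, leaving matching factors of $1/a_S(N)$. The only point requiring a brief check is that the twist $\nu^{\tilde c(M,U)}$ attached to the coaction is taken to be identical to the one attached to the action (this is the implicit convention in the phrase "the comodule structure is similarly modified" in the paragraph following equation \eqref{eq:twistCocycle}); with that convention, the prefactor appears symmetrically on both sides and drops out of the comparison.
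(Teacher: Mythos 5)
Your proof is correct and is exactly the direct verification on basis elements one expects; the paper states the lemma without proof, so there is no alternative argument to compare against. The one point worth flagging — that the twist $\nu^{\tilde c(M,U)}$ attached to $\rho$ must match the one attached to $\star$ for the prefactors to cancel — you handle correctly, and this reading of ``the comodule structure is similarly modified'' is forced anyway by analogy with the Hall algebra case, where the same factor $\nu^{c(V,U)}$ appears in both $\Delta$ and the product so that Green's adjunction $(x\otimes y,\Delta z)_{\mathcal{H}\otimes\mathcal{H}}=(xy,z)_{\mathcal{H}}$ goes through.
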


We now give two examples.

Given an exact category $\mathcal{A}$, the triple $(H\mathcal{A}, S_H, 1_{H \mathcal{A}})$, with $H\mathcal{A} = \mathcal{A} \times \mathcal{A}^{op}$ and $S_H(A, B) = (B, A)$, is called the hyperbolic exact category with duality; all of its self-dual objects are hyperbolic. Let $\mathcal{H}_{\mathcal{A}}^{op-cop}$ be the (co)algebra obtained from $\mathcal{H}_{\mathcal{A}}$ by taking the opposite (co)multiplication. Then $\mathcal{H}_{H\mathcal{A}} \simeq \mathcal{H}_{\mathcal{A}} \otimes_R \mathcal{H}_{\mathcal{A}}^{op-cop}$. Note that $\mathcal{H}_{\mathcal{A}}$ is in a canonical way a left $\mathcal{H}_{\mathcal{A}} \otimes_R \mathcal{H}_{\mathcal{A}}^{op-cop}$-(co)module.

\begin{Prop}
\label{prop:disjUnion}
The map
\[
\mathcal{H}_{\mathcal{A}} \rightarrow \mathcal{M}_{H\mathcal{A}} , \;\;\;\; [X] \mapsto [H(X)]
\]
extends to an isomorphism of $\mathcal{H}_{\mathcal{A}} \otimes_R \mathcal{H}_{\mathcal{A}}^{op-cop}$-(co)modules preserving Grothendieck-Witt gradings and Green forms.
\end{Prop}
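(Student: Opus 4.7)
The plan is to classify self-dual objects of $H\mathcal{A}$, express the self-dual Hall numbers of $H\mathcal{A}$ as iterated ordinary Hall numbers of $\mathcal{A}$, and then recognize the resulting structure as the regular bimodule $\mathcal{H}_{\mathcal{A}}$. A self-dual structure on $(A,B) \in H\mathcal{A}$ is a morphism $\psi = (\psi_1, \psi_2): (A,B) \to (B,A)$ with both components isomorphisms, and the symmetry condition $S_H(\psi)\Theta = \psi$ with $\Theta = 1_{H\mathcal{A}}$ forces $\psi_1 = \psi_2$. Using $\psi_1$ to identify $B$ with $A =: X$ presents $((A,B), \psi)$ as isometric to $H(X) \in H\mathcal{A}_{S_H}$, whose underlying object is $(X,X)$, so $H$ induces a bijection $Iso(\mathcal{A}) \xrightarrow[]{\sim} Iso(H\mathcal{A}_{S_H})$. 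A short computation then shows $Aut_{S_H}(H(X)) \simeq Aut(X)$ via $\phi \leftrightarrow (\phi, \phi^{-1})$, so $a_{S_H}(H(X)) = a(X)$ and the Green forms match term-by-term.

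Next, I would unpack a self-dual exact sequence $(U_1, U_2) \overset{i}{\rightarrowtail} H(X) \dashtwoheadrarrow H(Y)$. Writing $i = (i_1, i_2)$ with $i_1: U_1 \rightarrowtail X$ an admissible monic and $i_2: X \twoheadrightarrow U_2$ an admissible epic in $\mathcal{A}$, the composition rule in $\mathcal{A} \times \mathcal{A}^{op}$ reduces the isotropy condition to $i_2 \circ i_1 = 0$, yields $(U_1, U_2)^\perp = (\ker i_2, X/U_1)$, and identifies both components of $H(X) /\!\!/ (U_1, U_2)$ with $\ker(i_2)/i_1(U_1)$. A self-dual exact sequence is therefore the same datum as an admissible filtration $U_1 \subset Z \subset X$ in $\mathcal{A}$ with $Z/U_1 \simeq Y$ and $X/Z \simeq U_2$ (where $Z = \ker i_2$), giving
\[
G^{H(X)}_{(U_1, U_2), H(Y)} = \sum_{Z \in Iso(\mathcal{A})} F^Z_{U_1, Y}\, F^X_{Z, U_2},
\]
which is precisely the coefficient of $[X]$ in the triple product $[U_1][Y][U_2] \in \mathcal{H}_{\mathcal{A}}$.

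Third, the componentwise splitting of ordinary exact sequences in $H\mathcal{A}$ into an exact sequence in $\mathcal{A}$ and one in $\mathcal{A}^{op}$ yields the algebra isomorphism $\mathcal{H}_{H\mathcal{A}} \simeq \mathcal{H}_{\mathcal{A}} \otimes_R \mathcal{H}_{\mathcal{A}}^{op-cop}$ sending $[(A, B)] \mapsto [A] \otimes [B]$, and under this identification the canonical left action on $\mathcal{H}_{\mathcal{A}}$ is $([U_1] \otimes [U_2]) \cdot [Y] = [U_1][Y][U_2]$. Combined with the previous step, the map $[X] \leftrightarrow [H(X)]$ intertwines this action with the Hall-module action on $\mathcal{M}_{H\mathcal{A}}$, and the formally dual count handles the comodule compatibility. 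Finally, every self-dual object of $H\mathcal{A}$ is hyperbolic, so $W(H\mathcal{A}) = 0$, and the metabolic relations on $GW(H\mathcal{A})$, each arising from a Lagrangian $(U_1, U_2) \subset H(X)$ in the form of an exact sequence $U_1 \rightarrowtail X \twoheadrightarrow U_2$ in $\mathcal{A}$, are exactly the defining relations of $K(\mathcal{A})$; by \eqref{eq:GWexact}, $H: K(\mathcal{A}) \to GW(H\mathcal{A})$ is then an isomorphism and identifies the two gradings.

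The main obstacle is the $\mathcal{A}^{op}$ bookkeeping in the second component: one must check carefully that kernels, cokernels, orthogonals and reductions computed in $H\mathcal{A}$ translate into the claimed subquotient data in $\mathcal{A}$ after reversing arrows. Once this dictionary is fixed, the equality of self-dual Hall numbers with iterated ordinary Hall numbers follows from the Second Isomorphism theorem in $\mathcal{A}$, and no additional twist correction is needed provided the bilinear form twisting $\mathcal{H}_{H\mathcal{A}}$ and $\mathcal{M}_{H\mathcal{A}}$ is induced from the one twisting $\mathcal{H}_{\mathcal{A}}$.
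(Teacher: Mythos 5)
Your proposal is correct and follows essentially the same route as the paper's proof: both identify $Iso(H\mathcal{A}_{S_H})$ with $Iso(\mathcal{A})$ via the hyperbolic functor, compute $Aut_{S_H}(H(X))\simeq Aut(X)$, express the self-dual Hall numbers as a sum of products of ordinary Hall numbers (yours $\sum_Z F^Z_{U_1,Y}F^X_{Z,U_2}$ and the paper's $\sum_W F^X_{U_1,W}F^W_{Y,U_2}$ being the two standard filtration parametrizations of the coefficient of $[X]$ in $[U_1][Y][U_2]$), and observe $K(\mathcal{A})\xrightarrow{\sim}GW(H\mathcal{A})$. Your spelling-out of the metabolic relations as the defining relations of $K(\mathcal{A})$ is a touch more explicit than the paper, but it is the same argument.
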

\begin{proof}
For simplicity set $c=\tilde{c}=0$. That the above map is an $R$-module isomorphism follows from the fact that a self-dual object of $H\mathcal{A}$ can be written as $U \oplus S_H(U)$ for a unique $U \in \mathcal{A}$. A subobject of $H(X)$, $X \in \mathcal{A}$, is necessarily of the form $U_1 \oplus S_H(U_2)$ for some $U_1, U_2 \in \mathcal{A}$, and is isotropic if and only if $S_H(U_2)  \subset  S_H(X \slash U_1)$. Summing over isomorphism types of $X \slash U_1$ shows
\[
G^{H(X)}_{U_1 \oplus S_H(U_2), H(Y)} = \sum_{W \in Iso(\mathcal{A})} F^X_{U_1, W} F^W_{Y, U_2},
\]
where we have used the equality 
\[
F^{S_H(W)}_{S_H(U_2), S_H(Y)} = F^{W}_{Y, U_2}.
\]
This implies that $G^{H(X)}_{U_1 \oplus S_H(U_2), H(Y)}$ is the coefficient of $[X]$ in $[U_1] [Y][U_2]$, all multiplication in $\mathcal{H}_{\mathcal{A}}$, establishing the $\mathcal{H}_{\mathcal{A}} \otimes_R \mathcal{H}_{\mathcal{A}}^{op}$-module isomorphism $\mathcal{H}_{\mathcal{A}} \simeq \mathcal{M}_{H\mathcal{A}} $.  Using
\[
Aut(U_1 \oplus S_H(U_2)) \simeq Aut(U_1) \times Aut (U_2), \;\;\; Aut_S(H(X)) \simeq Aut(X),
\]
similar arguments give the comodule isomorphism and show that Green forms are preserved. That the gradings are respected follows from the fact that the restriction of the hyperbolic functor to $\mathcal{A} \subset H \mathcal{A}$ induces an isomorphism $K(\mathcal{A}) \xrightarrow[]{\sim} GW(H\mathcal{A})$.
\end{proof}

\begin{Ex}
Let $\mathcal{H}_{Vect_X}$ and $\mathcal{M}_{Vect_X}$ be the Hall algebra and module associated to a smooth projective curve $X$ over $\mathbb{F}_q$, with duality determined by a line bundle $\mathcal{L}$ and a sign $s$. Following the automorphic interpretation of $\mathcal{H}_{Vect_X}$, $\mathcal{M}_{Vect_X}$ is identified with the space of $\mathcal{L}$-twisted unramified automorphic forms for symplectic or orthogonal groups over $\mathbb{F}_q(X)$. The Witt group of $(Vect_X, \mathcal{L}, \Theta)$ is finite \cite{arason1994} and therefore provides a finite $\mathcal{H}_{Vect_X}$-module decomposition of $\mathcal{M}_{Vect_X}$. As the duality does not extend to $Coh_X$ there is no obvious Hall module interpretation of Hecke operators on $\mathcal{M}_{Vect_X}$.
\end{Ex}

\subsection{An identity for self-dual Hall numbers}
In this section we prove the following theorem.

\begin{Thm}
\label{thm:sdRiedtmann}
Let $\mathcal{A}$ be a $\mathbb{F}_q$-linear hereditary finitary abelian category with duality. For all $U \in \mathcal{A}$ and $M \in \mathcal{A}_S$, the following identity holds:
\[
\sum_{N \in Iso(\mathcal{A}_S)} \frac{ \mathcal{G}^N_{U,M}}{a_S(N)} =q^{-\langle M, U \rangle - \mathcal{E}(U)}.
\]
\end{Thm}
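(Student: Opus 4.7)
The plan is to prove this as a self-dual analogue of Riedtmann's classical formula, by combining an orbit-stabilizer argument on the groupoid of self-dual exact sequences with a self-dual version of Grothendieck's extensions panach\'{e}es. I would let $Aut_S(N)$ act on $\underline{\mathcal{G}}^N_{U,M}$ by acting on the admissible embedding $k: E \hookrightarrow N$ (and the induced maps $i$ and $S(i)\psi_N$), and identify the orbits as isometry classes of self-dual Yoneda extensions of $M$ by $U$ with middle term isometric to $N$. A direct block-matrix computation with respect to the filtration $U \subset E \subset N$, imposing the self-adjointness condition $S(g)\psi_N g = \psi_N$, shows that the stabilizer of any diagram is the unipotent radical of the parabolic of $Aut_S(N)$ stabilising the isotropic $U$; as a set this unipotent group is
\[
\Hom(M,U) \times \Hom(S(U), U)^{-S},
\]
the antisymmetric constraint on the $S(U) \to U$ block arising from the linearised self-adjointness (exactly as for the unipotent radicals in classical orthogonal and symplectic groups). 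Orbit-stabilizer therefore yields
\[
\sum_{N \in Iso(\mathcal{A}_S)} \frac{\mathcal{G}^N_{U,M}}{a_S(N)} = \frac{|\Ext^1_{SD}(M,U)|}{|\Hom(M,U)| \cdot |\Hom(S(U), U)^{-S}|},
\]
where $\Ext^1_{SD}(M,U)$ denotes the total set (over all $N \in Iso(\mathcal{A}_S)$) of isometry classes of self-dual Yoneda extensions.

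To compute $|\Ext^1_{SD}(M,U)|$ I would forget the self-dual structure: the map $\Ext^1_{SD}(M,U) \to \Ext^1(M,U)$ sends a self-dual extension to the underlying ordinary extension $0 \to U \to E \to M \to 0$ with $E = U^\perp$. For a fixed class $[\epsilon] \in \Ext^1(M,U)$, the dual extension $[\epsilon^*]: 0 \to M \to S(E) \to S(U) \to 0$ is determined by $\psi_M$, and a self-dual refinement to a 3-step filtration $U \subset E \subset N$ is a Grothendieck extension panach\'{e}e equipped with a compatible self-duality on $N$. The classical panach\'{e}es over $[\epsilon]$ and $[\epsilon^*]$ form a torsor over $\Ext^1(S(U), U)$ and are non-empty by the hereditary hypothesis (so that the $\Ext^2$-obstruction vanishes). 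The involution induced by $(S,\Theta)$ acts equivariantly on this torsor and on its structure group, and the self-dual refinements are exactly its fixed locus, hence form a sub-torsor over the symmetric subgroup $\Ext^1(S(U), U)^{+S}$. Thus $|\Ext^1_{SD}(M,U)| = |\Ext^1(M,U)| \cdot |\Ext^1(S(U), U)^{+S}|$, and assembling with the hereditary identity $\langle M, U\rangle = \dim_{\mathbb{F}_q} \Hom(M,U) - \dim_{\mathbb{F}_q} \Ext^1(M,U)$ together with the specialisation $\mathcal{E}(U) = \dim_{\mathbb{F}_q} \Hom(S(U),U)^{-S} - \dim_{\mathbb{F}_q} \Ext^1(S(U),U)^{+S}$ of Theorem \ref{thm:descendtoGroth} to the hereditary case (where only $i=0,1$ contribute) gives
\[
\sum_N \frac{\mathcal{G}^N_{U,M}}{a_S(N)} = \frac{|\Ext^1(M,U)|}{|\Hom(M,U)|} \cdot \frac{|\Ext^1(S(U), U)^{+S}|}{|\Hom(S(U), U)^{-S}|} = q^{-\langle M, U \rangle - \mathcal{E}(U)},
\]
as desired.

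The main obstacle is the self-dual panach\'{e}es step: one must develop enough self-dual homological algebra to verify that the $(S,\Theta)$-involution on the panach\'{e}e torsor is compatible with its induced action on $\Ext^1(S(U), U)$, guarantee non-emptiness of the fixed locus (for instance by constructing a metabolic base point over $E$ via the hyperbolic functor together with the given extension $[\epsilon]$), and identify this fixed locus with an $\Ext^1(S(U), U)^{+S}$-torsor. This is precisely the combinatorial input alluded to in the introduction, and is also the step most sensitive to the admissibility conditions inherent in the exact category $\mathcal{A}$.
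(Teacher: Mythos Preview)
Your strategy matches the paper's: orbit--stabilizer for the $Aut_S(N)$-action on $\underline{\mathcal{G}}^N_{U,M}$, followed by a self-dual extensions panach\'{e}es analysis of the forgetful map to $\Ext^1(M,U)$. However, two of your intermediate claims are false in general, and it is only a cancellation between the two errors that produces the correct final count.

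First, the stabilizer. Your identification of $\text{Stab}_{Aut_S(N)}(\mathfrak{s})$ with the unipotent radical of the parabolic fixing $U$ is correct, but the asserted bijection of that radical with $\Hom(M,U)\times\Hom(S(U),U)^{-S}$ is not. The block-matrix heuristic from classical groups fails because an automorphism of $N$ must be a morphism in $\mathcal{A}$, not merely a linear isometry of the underlying form. Given $\beta\in\Hom(M,U)$, the existence of a lift to an isometry of $N$ is obstructed by a class $\delta_\xi^S(\beta)\in\Ext^1(S(U),U)^S$, where $\delta_\xi^S$ is the symmetrized connecting homomorphism built from the long exact sequences through $\xi=[\mathfrak{s}_-]$ and its dual. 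The correct count (the paper's Lemma~\ref{lem:stab}) is
\[
\vert\text{Stab}_{Aut_S(N)}(\mathfrak{s})\vert=\vert\ker\delta_\xi^S\vert\cdot\vert\Hom(S(U),U)^{-S}\vert,
\]
which genuinely depends on $\xi$, not only on $U$ and $M$; the Remark following Lemma~\ref{lem:stab} flags exactly this point.

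Second, the fibre over $\xi$. You assert that the self-dual refinements form a torsor for $\Ext^1(S(U),U)^S$. The action of this group is indeed transitive, and your fixed-locus reasoning correctly singles it out, but on \emph{isometry classes} the action is not free: two panach\'{e}es differing by an element of $\im\delta_\xi^S$ become identified under $Aut_S(N)$ (Lemma~\ref{lem:fibre}), so $\vert\tilde{T}^{-1}(\xi)\vert=\vert\Ext^1(S(U),U)^S\vert/\vert\im\delta_\xi^S\vert$.

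The two corrections cancel by rank--nullity for $\delta_\xi^S$, recovering your displayed formula after summing over $\xi$. So the architecture is right, but a rigorous proof must introduce and track $\delta_\xi^S$ throughout; your ``direct block-matrix computation'' would, if carried out carefully in $\mathcal{A}$ rather than in $Vect$, reveal precisely this obstruction.
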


We proceed in steps. Fix $\mathfrak{s} = (E; i, j, k, \pi) \in \underline{\mathcal{G}}^N_{U,M}$. Applying $\Hom(S(U), -)$ to the exact sequence $\mathfrak{s}_-=(j, \pi)$ gives the long exact sequence
\[
\cdots \rightarrow \Hom(S(U),M) \xrightarrow{\delta_-} \Ext^1(S(U),U) \xrightarrow[]{j_*} \Ext^1(S(U),E) \rightarrow \cdots
\]
Similarly, applying $\Hom(-, U)$ to $\mathfrak{s}_{\vert}=(S(\pi) \psi_M, S(j))$ gives
\[
\cdots \rightarrow \Hom(M,U) \xrightarrow{\delta_{\vert}} \Ext^1(S(U),U) \xrightarrow[]{S(j)^*} \Ext^1(S(E),U) \rightarrow \cdots
\]
Define $\delta_{\mathfrak{s}}^S: \Hom(M,U)  \rightarrow  \Ext^1(S(U),U)^S$ by
\[
\beta \mapsto \delta_{\vert} \beta + \delta_- (\psi_M^{-1} S(\beta)).
\]
Note that $\delta^S_{\mathfrak{s}}$ depends only on the class $[\mathfrak{s}_-]  \in \Ext^1(M,U)$.

Define the set of self-dual extensions of $M$ by $U$ by
\[
\leftexp{S}{\Ext}^1(M,U) = \bigsqcup_{N \in Iso(\mathcal{A}_S)} \underline{\mathcal{G}}^N_{U,M} \slash Aut_S(N),
\]
where $\phi \in Aut_S(N)$ acts by $\phi \cdot (E; i, j, k, \pi) = (E; \phi i, j, \phi k, \pi)$. The assignment $\mathfrak{s} \mapsto [\mathfrak{s}_-]$ defines maps $T: \underline{\mathcal{G}}^N_{U,M} \rightarrow \Ext^1(M,U)$ and $\tilde{T}: \leftexp{S}{\Ext}^1(M,U) \rightarrow \Ext^1(M,U)$.

\begin{Lem}
\label{lem:stab}
If $\mathfrak{s} \in \underline{\mathcal{G}}^N_{U,M}$ satisfies $T(\mathfrak{s})=\xi$, then
\[
\vert \text{\textnormal{Stab}}_{Aut_S(N)} (\mathfrak{s}) \vert = \vert \ker \delta_{\xi}^S \vert \vert \Hom(S(U),U)^{-S} \vert.
\]
\end{Lem}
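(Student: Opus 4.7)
The plan is to compute $|\text{Stab}_{Aut_S(N)}(\mathfrak{s})|$ by filtering the stabilizer relative to the flag $U \subset E \subset N$. First I would unpack $\phi \cdot \mathfrak{s} \sim \mathfrak{s}$ concretely: an equivalence between $(E; \phi i, j, \phi k, \pi)$ and $(E; i, j, k, \pi)$ requires an isomorphism $\alpha \colon E \to E$ with $\alpha j = j$, $\pi \alpha = \pi$, and $k \alpha = \phi k$. Since $k$ is monic, $\alpha$ is forced to equal $\bar\phi := \phi|_E$, which is defined because $\phi i = i$ together with $\phi \in Aut_S(N)$ makes $\phi$ preserve $E = U^{\perp}$. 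The conditions $\bar\phi j = j$ (automatic from $\phi i = i$) and $\pi \bar\phi = \pi$ then force $\bar\phi = 1_E + j\gamma\pi$ for a unique $\gamma \in \Hom(M, U)$. This defines a group homomorphism $\Phi \colon \text{Stab}(\mathfrak{s}) \to \Hom(M,U)$, and the proof reduces to computing $|\ker \Phi|$ and $|\text{image}(\Phi)|$.

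For the kernel, $\phi \in \ker \Phi$ satisfies $\phi k = k$, so writing $q = S(i)\psi_N$ I have $\phi = 1_N + \beta q$ for a unique $\beta \in \Hom(S(U), N)$. The naturality of $\Theta$ yields the identity $S(q) = \psi_N i \Theta_U^{-1}$; together with the isotropy relations $qi = 0$ and $qk = 0$, this lets me expand the isometry equation $S(\phi)\psi_N \phi = \psi_N$. Postcomposing the resulting obstruction with $S(i)$ on the left forces $q\beta = 0$, so $\beta = k\beta'$ for some $\beta'$; postcomposing once more with $S(k)$ forces $\pi\beta' = 0$, so $\beta = i\beta''$ for a unique $\beta'' \in \Hom(S(U), U)$. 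The cubic term disappears because $qi = 0$, and the residual linear condition collapses to $\beta'' + \Theta_U^{-1} S(\beta'') = 0$, which is exactly the defining relation of the $(-1)$-eigenspace $\Hom(S(U), U)^{-S}$ under the $\mathbb{Z}_2$-action induced by $(S, \Theta)$. Hence $|\ker \Phi| = |\Hom(S(U), U)^{-S}|$.

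The hard step will be identifying $\text{image}(\Phi) = \ker \delta_\xi^S$. The guiding principle is the diagram identity $\pi_*[E \rightarrowtail N \twoheadrightarrow S(U)] = [\mathfrak{s}_\vert] \in \Ext^1(S(U), M)$, which is a direct consequence of the $3 \times 3$ diagram of Proposition \ref{prop:sdReduction}: pushing out the vertical sequence along $\pi$ produces the right column. This identity implies that $\delta_\vert(\gamma) = \gamma_*[\mathfrak{s}_\vert]$ is precisely the obstruction to lifting $\gamma\pi \colon E \to U$ to a morphism $N \to U$ along the vertical sequence, while $\delta_-(\psi_M^{-1} S(\gamma))$ is the dual obstruction recorded along the horizontal sequence $\mathfrak{s}_-$ and transported via $(S, \tilde\psi)$. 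Their sum $\delta_\xi^S(\gamma) \in \Ext^1(S(U), U)^S$ is the self-dual obstruction to producing a symmetric pair of lifts, which is exactly what is needed to write $\phi = 1_N + \epsilon$ with $\epsilon k = i\gamma\pi$ and $S(\phi)\psi_N\phi = \psi_N$.

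In one direction, given $\phi \in \text{Stab}(\mathfrak{s})$ with parameter $\gamma$, I would read $\delta_\xi^S(\gamma)$ off the Yoneda class of the symmetric tensor $\psi_N \epsilon + S(\epsilon)\psi_N \in \Hom(N, S(N))^S$ after decomposing it along the two natural subquotients of $N$ (using $U \subset E \subset N$ and its dual filtration); the two contributions to the obstruction class are precisely $\delta_\vert(\gamma)$ and $\delta_-(\psi_M^{-1} S(\gamma))$. In the reverse direction, given $\gamma$ with $\delta_\xi^S(\gamma) = 0$, I would choose a splitting witnessing the vanishing to build $\epsilon$ and verify that $\phi = 1_N + \epsilon$ is an isometry, possibly correcting by an element of $\ker \Phi$ as in the previous paragraph. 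Combining both halves yields $|\text{Stab}(\mathfrak{s})| = |\ker \Phi| \cdot |\text{image}(\Phi)| = |\Hom(S(U), U)^{-S}| \cdot |\ker \delta_\xi^S|$, as claimed.
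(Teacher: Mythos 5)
Your proposal is correct and follows essentially the same strategy as the paper: both project the stabilizer onto $\Hom(M,U)$ (the paper via the parameter $\beta$ in $r_\beta = 1_E - j\beta\pi$, you via the homomorphism $\Phi$), identify the image with $\ker\delta_\xi^S$ through the lifting diagram for the pair $(\mathfrak{s}_-, \mathfrak{s}_\vert)$, and identify the fiber over each $\beta$ with $\Hom(S(U),U)^{-S}$. Your direct unipotent computation of $\ker\Phi$ via $\phi = 1_N + i\beta''q$ with $q = S(i)\psi_N$ is a slightly cleaner packaging of what the paper phrases as a $\Hom(S(U),U)^{-S}$-torsor structure on the set of isometry-producing lifts $\tau$, but the underlying count and the role of the symmetrized connecting map $\delta_\xi^S$ are identical.
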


\begin{proof}
An element $\phi \in Aut_S(N)$ fixes $\mathfrak{s}$ if and only if there exists $r \in Aut(E)$ with
\[
j =rj, \;\;\; \pi = \pi r^{-1},  \;\;\; \phi k = k r^{-1}.
\]
These equations imply that
\[
r =r_{\beta}=1_E - j \beta \pi, \;\;\;\;\;\;  \phi = \phi_{\tau}= 1_N + k \tau S(k) \psi_N
\]
for unique $\beta \in \Hom(M,U)$ and $\tau \in \Hom(S(E),E)$. The condition that $\phi_{\tau}$ is an isometry is equivalent to 
\[
\Theta_E^{-1} S(\tau) + \tau + \Theta_E^{-1} S(\pi \tau) \psi_M \pi \tau =0.
\]
Using this equation, it is straightforward to show that the above data fit into the commutative diagram
\[
\begin{tikzpicture}[every node/.style={on grid},  baseline=(current bounding box.center),node distance=1.8]
  \node (A) {$M$};
  \node (B) [right=of A]{$S(E)$}; 
  \node (C) [right=of B]{$S(U)$};
  \node (D) [below=of A]{$U$};
  \node (E) [right=of D]{$E$}; 
  \node (F) [right=of E]{$M$};
  \node (X) [left=of A]{$0$};
  \node (Y) [left=of D]{$0$};
  \node (Z) [right=of C]{$0$};
  \node (W) [right=of F]{$0$};
  \draw[->] (A)-- node [above] {\footnotesize $S(\pi) \psi_M$ }(B);
  \draw[->] (B)-- node [above] {\footnotesize $S(j)$ }  (C);
  \draw[->] (D)-- node [below] {\footnotesize $j$ }  (E);
  \draw[->] (E)-- node [below] {\footnotesize $\pi$ }  (F);
  \draw[->] (A)-- node [left] {\footnotesize $\beta$ }  (D);
  \draw[->] (C)-- node [right] {\footnotesize $-\psi_M^{-1} S(\beta)$ }  (F);
  \draw[->] (B)-- node [left] {\footnotesize $\tau$ }  (E);  
  \draw[->] (X)--  (A);  
  \draw[->] (Y)--  (D);  
  \draw[->] (C)--  (Z);  
  \draw[->] (F)--  (W);  
\end{tikzpicture}
\]

Conversely, for fixed $\beta$, the existence of a lift $\tau$, as in the above diagram, is equivalent to the condition $\beta \in \ker \, \delta_{\xi}^S$. Moreover, if $\beta \in \ker \, \delta_{\xi}^S$, then the set of its lifts is a $\Hom(S(U),U)$-torsor. Fix a lift $\tau_0$. Then $\Theta_E^{-1} S(\tau_0) + \tau_0 = j \mu S(j)$ for a unique $\mu \in \Hom(S(U),U)^S$. Putting
\[
\tau_1 = \tau_0 - \frac{1}{2}j \left(\mu +  \beta \psi_M^{-1} S(\beta) \right) S(j)
\]
gives $\phi_{\tau_1} \in \mbox{Stab}_{Aut_S(N)} (\mathfrak{s})$ lifting $\beta$. Finally, the set of lifts of $\beta$ to $\mbox{Stab}_{Aut_S(N)} (\mathfrak{s})$ is a torsor for the subgroup $\Hom(S(U),U)^{-S} \subset \Hom(S(U),U)$.
\end{proof}

\begin{Rem}
The ordinary version of Theorem \ref{thm:sdRiedtmann} is a corollary of the formula \cite{riedtmann1994}
\[
\frac{\mathcal{F}^X_{U,V} }{a(X)} = \frac{\vert \Ext^1(V,U)_X \vert}{\vert \Hom(V,U) \vert},
\]
where $\Ext^1(V,U)_X \subset \Ext^1(V,U)$ are the extensions with middle term isomorphic to $X$. This formula follows from the fact that all elements of $\mathcal{F}^X_{U,V}$ have $Aut(X)$-stabilizer isomorphic to $\Hom(V,U)$. Theorem \ref{thm:sdRiedtmann} is complicated by the fact that $\text{\textnormal{Stab}}_{Aut_S(N)} (\mathfrak{s})$ depends on more data than just $U$ and $M$.
\end{Rem}

Turning to the problem of describing the fibres of $\tilde{T}$, suppose that
\[
\mathfrak{t}_- : \;\;\;\;\;\;   U \overset{j}{\rightarrowtail} E \overset{\pi}{\twoheadrightarrow} M
\]
represents a class $\xi \in \Ext^1(M,U)$. Since $\mathcal{A}$ is hereditary, there exists an exact commutative diagram $\mathfrak{t}$ extending $\mathfrak{t}_-$:
\[
\begin{tikzpicture}[every node/.style={on grid},  baseline=(current bounding box.center),node distance=1.45]
  \node (A) {$U$};
  \node (B) [right=of A]{$E$}; 
  \node (C) [right=of B]{$M$};
  \node (D) [below=of A]{$U$};
  \node (E) [right=of D]{$N$}; 
  \node (F) [right=of E]{$S(E)$};
  \node (G) [below=of E]{$S(U)$};
  \node (H) [right=of G]{$S(U)$};
  
  \draw[>->] (A)-- node [above] {\footnotesize $j$ }(B);
  \draw[->>] (B)-- node [above] {\footnotesize $\pi$ }  (C);
  \draw[>->] (D)-- node [above] {\footnotesize $i$ }  (E);
  \draw[->>] (E)-- node [above] {\footnotesize $\rho$ }  (F);
  \draw[double equal sign distance] (A)-- (D);
  \draw[double equal sign distance] (G)-- (H);
  \draw[>->] (B)-- node [right] {\footnotesize $k$ }  (E);
  \draw[>->] (C)-- node [right] {\footnotesize $S(\pi) \psi_M$ }  (F);
  \draw[->>] (E)-- node [right] {\footnotesize $\tau$ }  (G);
  \draw[->>] (F)-- node [right] {\footnotesize $S(j)$ }  (H);
\end{tikzpicture}
\]
In \cite{grothendieck1972} (see also \cite{bertrand2013}) such a diagram is called an \textit{extension panach\'{e}e} of $S(E)$ by $E$. After using $\Theta$, the dual $S(\mathfrak{t})$ is another such extension panach\'{e}e. By \cite[\S 9.3.8.b]{grothendieck1972} there exists a unique $\gamma_{\mathfrak{t}} \in \Ext^1(S(U),U)$ such that $S(\mathfrak{t})$ and $\mathfrak{t} \sbt \gamma_{\mathfrak{t}}$ are isomorphic extensions panach\'{e}es. Here $\sbt$ denotes the simply transitive action of $\Ext^1(S(U),U)$ on isomorphism classes of extensions panach\'{e}es of $S(E)$ by $E$. Precisely, $\mathfrak{t}\sbt \gamma_{\mathfrak{t}}$ is the canonical lift of the Baer sum $N + j_* \gamma_{\mathfrak{t}} \in \Ext^1(S(U),E)$ to an extension panach\'{e}e of $S(E)$ by $E$.

\begin{Lem}
\label{lem:reductionStruct}
There exists a self-dual structure on $N$ satisfying $\rho = S(k) \psi_N$ (i.e. $\mathfrak{t} \in \underline{\mathcal{G}}^N_{U,M}$) if and only if $\gamma_{\mathfrak{t}} =0$. Moreover, if $\psi_N$ exists, then it is unique up to isometry.
\end{Lem}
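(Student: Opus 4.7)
My plan is to treat the three assertions of the lemma---necessity of $\gamma_{\mathfrak{t}}=0$, sufficiency, and uniqueness up to isometry---in turn, with the sufficiency direction doing the real work. Throughout I use the simply transitive action of $\Ext^1(S(U),U)$ on isomorphism classes of extensions panach\'ees of $S(E)$ by $E$ from \cite[\S 9.3.8.b]{grothendieck1972}, which is how $\gamma_{\mathfrak{t}}$ was defined; the whole argument reduces to comparing $S(\mathfrak{t})$ with $\mathfrak{t}$ under this action.

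For necessity, suppose $\psi_N$ is a self-dual structure with $\rho = S(k)\psi_N$. Then I would verify that $\psi_N$, together with the identity maps on $E$ and $S(E)$, assembles into an isomorphism of extensions panach\'ees $\mathfrak{t}\xrightarrow{\sim} S(\mathfrak{t})$: the right-hand square is the hypothesis, while the left-hand commutativity $\psi_N\circ i = S(\tau)\Theta_U$ follows from symmetry of $\psi_N$ and naturality of $\Theta$, using $\tau = S(i)\psi_N$ to rewrite $S(\tau)\Theta_U = S(\psi_N)\Theta_N i = \psi_N i$. By simple transitivity of the $\sbt$-action this gives $S(\mathfrak{t}) \simeq \mathfrak{t}\sbt 0$, i.e.\ $\gamma_{\mathfrak{t}}=0$.

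For sufficiency, assume $\gamma_{\mathfrak{t}}=0$. By the same reference there is an isomorphism of extensions panach\'ees $\mathfrak{t}\xrightarrow{\sim} S(\mathfrak{t})$ whose middle component is some $\phi_N : N \xrightarrow{\sim} S(N)$. This $\phi_N$ satisfies every compatibility required of a self-dual structure making $\mathfrak{t}$ self-dual-exact, except possibly the symmetry $S(\phi_N)\Theta_N = \phi_N$. To repair this, set $\bar\phi_N := S(\phi_N)\Theta_N$; using $S(\Theta_N)\Theta_{S(N)}=1_{S(N)}$ and naturality of $\Theta$, one checks that $\overline{\bar\phi_N} = \phi_N$ and that $\bar\phi_N$ is itself an isomorphism of extensions panach\'ees. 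Since the characteristic is not $2$, the average
\[
\psi_N := \tfrac{1}{2}\bigl(\phi_N + \bar\phi_N\bigr)
\]
is symmetric by construction and still satisfies $\rho = S(k)\psi_N$. Invertibility follows because $g := \phi_N^{-1}\bar\phi_N$ is an automorphism of the extension panach\'ee $\mathfrak{t}$ fixing both corners, hence restricts to the identity on the three successive quotients $U$, $M$, $S(U)$ of the filtration $0\subset i(U) \subset k(E) \subset N$; such an automorphism is unipotent, so $\tfrac{1}{2}(1+g)$ is invertible, and thus so is $\psi_N = \phi_N\cdot\tfrac{1}{2}(1+g)$.

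For uniqueness, two self-dual structures $\psi_N, \psi_N'$ realizing the same $\mathfrak{t}$ must differ by an automorphism of $N$ of the form $1_N + k\alpha S(k)\psi_N$ for a unique $\alpha \in \Hom(S(E),E)$; the common symmetry of $\psi_N$ and $\psi_N'$ forces the skew-symmetric part of $\alpha$ under $\alpha\mapsto \Theta_E^{-1}S(\alpha)$ to vanish, so in characteristic $\ne 2$ one can solve the linear equation $\beta + \Theta_E^{-1}S(\beta) = \alpha$ and produce an isometry $\lambda = 1_N + k\beta S(k)\psi_N$ with $S(\lambda)\psi_N'\lambda = \psi_N$, mirroring the averaging argument in the second half of the proof of Lemma \ref{lem:stab}. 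The main obstacle throughout is the symmetrization step in sufficiency: one must carefully unwind the structure of $S(\mathfrak{t})$ as an extension panach\'ee of $S(E)$ by $E$ (via $\Theta$) to confirm that the involution $\phi_N\mapsto \bar\phi_N$ genuinely preserves all four commutativities in diagram \eqref{diag:sdExact}, not merely the condition $\rho = S(k)\psi_N$. Once this verification is in place, invertibility of $\psi_N$ and uniqueness up to isometry are formal consequences of the characteristic hypothesis.
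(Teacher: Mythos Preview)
Your argument is correct and runs parallel to the paper's. For sufficiency the paper also starts from an isomorphism $\psi:N\to S(N)$ of extensions panach\'ees coming from $\gamma_{\mathfrak t}=0$, but rather than averaging it identifies the symmetry defect explicitly as $S(\psi)\Theta_N-\psi=S(\tau)\Theta_U\mu\tau$ for a unique $\mu\in\Hom(S(U),U)^{-S}$ and then corrects multiplicatively, setting $\psi_N=\psi\bigl(1_N+\tfrac12\, i\mu\tau\bigr)$. Since $\psi i=S(\tau)\Theta_U$, this is your average $\tfrac12(\phi_N+\bar\phi_N)$ in disguise; your packaging trades the explicit factorization of the defect for the separate unipotence check of invertibility. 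The paper does not spell out the uniqueness clause. In your sketch of it, note that because both $\psi_N$ and $\psi_N'$ automatically satisfy $\psi k=S(\rho)\Theta_E$ as well, their ratio actually lies in the smaller group $1+i\,\Hom(S(U),U)\,\tau$ rather than all of $1+k\,\Hom(S(E),E)\,S(k)\psi_N$; with that refinement (and the correct sign on $\beta$) your averaging-style construction of the isometry goes through.
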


\begin{proof}
The implication is clear. Conversely, if $\gamma_{\mathfrak{t}}=0$ then $\mathfrak{t} \simeq S(\mathfrak{t})$, that is, there exists an isomorphism $\psi: N \rightarrow S(N)$ satisfying $\psi k = S(\rho) \Theta_E$ and $S(k) \psi= \rho$. These equations imply that there exists a unique $\mu \in \Hom(S(U),U)^{-S}$ such that
\[
S(\psi) \Theta_N - \psi = S(\tau) \Theta_U \mu \tau.
\]
Then $\psi_N=\psi( 1_N + \frac{1}{2} i \mu \tau)$ is the desired self-dual structure.
\end{proof}

\begin{Lem}[see also {\cite[Lemme 3]{bertrand2013}}]
\label{lem:symmObstr}
The class $\gamma_{\mathfrak{t}}$ satisfies $\gamma_{\mathfrak{t}} + \Theta_{U *}^{-1} S(\gamma_{\mathfrak{t}}) =0$.
\end{Lem}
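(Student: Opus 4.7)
The plan is to exploit the fact that the duality $S$, combined with the canonical identification $\Theta$, defines an involution $\sigma$ on isomorphism classes of extensions panach\'{e}es of $S(E)$ by $E$, and that this involution is equivariant with respect to the simply transitive $\Ext^1(S(U),U)$-action up to the natural order-two operation $\gamma \mapsto \Theta_{U*}^{-1} S(\gamma)$. Once this compatibility is in hand, iterating the defining relation $S(\mathfrak{t}) \cong \mathfrak{t} \sbt \gamma_{\mathfrak{t}}$ forces the desired anti-symmetry.

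First, I would check that the assignment $\mathfrak{t} \mapsto S(\mathfrak{t})$, with $S^2(E)$ and $S^2(U)$ identified with $E$ and $U$ via $\Theta_E$ and $\Theta_U$, produces a well-defined involution $\sigma$ on isomorphism classes of extensions panach\'{e}es of $S(E)$ by $E$. The axiom $S(\Theta_U)\Theta_{S(U)} = 1_{S(U)}$, together with its analogue for $E$, ensures that $\sigma^2 = \mathrm{id}$ at the level of isomorphism classes.

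The key technical step is to establish the equivariance formula
\[
\sigma(\mathfrak{t} \sbt \gamma) \cong \sigma(\mathfrak{t}) \sbt \Theta_{U*}^{-1} S(\gamma), \qquad \gamma \in \Ext^1(S(U),U).
\]
This should follow by unpacking $\sbt$ as a Baer sum performed on the left column $0 \to U \to N \to S(U) \to 0$ of the extension panach\'{e}e, applying $S$ to that Baer sum, and then using $\Theta_U^{-1}$ to reinterpret the resulting extension back inside $\Ext^1(S(U),U)$. This bookkeeping, parallel in spirit to \cite[\S 9.3.8]{grothendieck1972} and \cite{bertrand2013}, is where I expect the main work to lie, since one must carefully track how the duality interacts with the pushout defining the Baer sum and with the identifications at the top and bottom rows of the extension panach\'{e}e.

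Given the equivariance and $\sigma^2 = \mathrm{id}$, the conclusion is immediate: applying $\sigma$ to the defining relation $\sigma(\mathfrak{t}) \cong \mathfrak{t} \sbt \gamma_{\mathfrak{t}}$ yields
\[
\mathfrak{t} \;\cong\; \sigma(\mathfrak{t}) \sbt \Theta_{U*}^{-1} S(\gamma_{\mathfrak{t}}) \;\cong\; \mathfrak{t} \sbt \bigl(\gamma_{\mathfrak{t}} + \Theta_{U*}^{-1} S(\gamma_{\mathfrak{t}})\bigr),
\]
and simple transitivity of the $\Ext^1(S(U),U)$-action on isomorphism classes of extensions panach\'{e}es of $S(E)$ by $E$ forces $\gamma_{\mathfrak{t}} + \Theta_{U*}^{-1} S(\gamma_{\mathfrak{t}}) = 0$.
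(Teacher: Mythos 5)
Your proposal follows essentially the same strategy as the paper: the paper's proof is precisely the instance of your equivariance formula $\sigma(\mathfrak{t} \sbt \gamma) \cong \sigma(\mathfrak{t}) \sbt \Theta_{U*}^{-1} S(\gamma)$ with $\gamma = \gamma_{\mathfrak{t}}$, establishing $S(\mathfrak{t}) \sbt \Theta_{U*}^{-1}S(\gamma_{\mathfrak{t}}) = \mathfrak{t}$ via Bertrand's Lemme A.1 and a comparison of middle horizontal exact sequences, after which freeness gives the conclusion exactly as you describe. The only difference is that the paper proves the single needed instance directly rather than the general equivariance statement, but the underlying bookkeeping with $\Theta$ and the Baer sum is identical.
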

\begin{proof}
We claim that $S(\mathfrak{t} )\sbt \Theta_{U*}^{-1} S(\gamma_{\mathfrak{t}}) = \mathfrak{t}$. The lemma will then follow from the definition of $\gamma_{\mathfrak{t}}$ and the freeness of the $\Ext^1(S(U),U)$-action. According to \cite[Lemme A.1]{bertrand2013}, $S(\mathfrak{t}) \sbt \Theta_{U*}^{-1} S(\gamma_{\mathfrak{t}})$ can also be described as the canonical lift of
\[
S(N) + S(j)^* \Theta_{U*}^{-1} S(\gamma_{\mathfrak{t}}) \in \Ext^1(S(E),U)
\]
to an extension panach\'{e}e. Since this extension is also the class of the middle horizontal exact sequence of $S(\mathfrak{t} \sbt \gamma_{\mathfrak{t}})$, the claim follows.
\end{proof}

The proof of \cite[Th\'{e}or\`{e}me 1]{bertrand2013} shows
\[
\gamma_{\mathfrak{t} \sbt \lambda} = \gamma_{\mathfrak{t}} + \lambda - \Theta_{U*}^{-1} S(\lambda), \;\;\; \lambda \in \Ext^1(S(U),U).
\]
From Lemma \ref{lem:symmObstr}, we conclude that $\gamma_{\mathfrak{t} \sbt (-\frac{1}{2} \gamma_{\mathfrak{t}})} =0$ and, by Lemma \ref{lem:reductionStruct}, that the diagram $\mathfrak{t} \sbt (-\frac{1}{2} \gamma_{\mathfrak{t}})$ extends to an element of $\underline{\mathcal{G}}^N_{U,M}$. In particular, $\tilde{T}^{-1}(\xi)$ is non-empty.

\begin{Lem}
\label{lem:fibre}
The action of $\Ext^1(S(U),U)^S$ on $\tilde{T}^{-1}(\xi)$ is transitive with stabilizer $\text{\textnormal{im}}\, \delta_{\xi}^S$. In particular,
\[
\vert \tilde{T}^{-1}(\xi) \vert = \frac{\vert \Ext^1(S(U),U)^S \vert }{\vert \im \delta^S_{\xi} \vert}.
\]
\end{Lem}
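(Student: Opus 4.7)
The plan is to parametrize $\tilde{T}^{-1}(\xi)$ via extensions panach\'{e}es of a fixed representative $\mathfrak{t}_-$ of $\xi$, and to transport the simply transitive $\Ext^1(S(U),U)$-action on isomorphism classes of such extensions panach\'{e}es to the self-dual setting. First I would define the $\Ext^1(S(U),U)^S$-action on $\tilde{T}^{-1}(\xi)$. Given $[\mathfrak{s}] \in \tilde{T}^{-1}(\xi)$ with underlying extension panach\'{e}e $\mathfrak{t}$ (which satisfies $\gamma_{\mathfrak{t}} = 0$ by Lemma \ref{lem:reductionStruct}) and $\lambda \in \Ext^1(S(U),U)^S$, the formula $\gamma_{\mathfrak{t} \sbt \lambda} = \gamma_{\mathfrak{t}} + \lambda - \Theta_{U*}^{-1} S(\lambda)$ together with the symmetry of $\lambda$ gives $\gamma_{\mathfrak{t} \sbt \lambda} = 0$, so $\mathfrak{t} \sbt \lambda$ lifts to a self-dual extension, uniquely up to isometry by Lemma \ref{lem:reductionStruct}. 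Setting $\lambda \cdot [\mathfrak{s}] := [\mathfrak{t} \sbt \lambda]$ and checking independence of the choice of representative of $[\mathfrak{s}]$ makes $\tilde{T}^{-1}(\xi)$ an $\Ext^1(S(U),U)^S$-set.

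For transitivity, suppose $[\mathfrak{s}], [\mathfrak{s}'] \in \tilde{T}^{-1}(\xi)$. Their lower sequences both represent $\xi$, so after replacing $\mathfrak{s}'$ by an equivalent self-dual extension one may arrange $\mathfrak{s}_- = \mathfrak{s}'_-$. The underlying extensions panach\'{e}es $\mathfrak{t}, \mathfrak{t}'$ then extend this common $\mathfrak{t}_-$, so by the freeness of the $\Ext^1(S(U),U)$-action, $\mathfrak{t}' = \mathfrak{t} \sbt \lambda$ for a unique $\lambda \in \Ext^1(S(U),U)$. The equalities $\gamma_{\mathfrak{t}} = \gamma_{\mathfrak{t}'} = 0$ combined with the same formula used above force $\lambda - \Theta_{U*}^{-1} S(\lambda) = 0$, i.e., $\lambda \in \Ext^1(S(U),U)^S$.

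For the stabilizer, $\lambda$ fixes $[\mathfrak{s}]$ iff $\mathfrak{t}$ and $\mathfrak{t} \sbt \lambda$ become isomorphic as extensions panach\'{e}es of $\mathfrak{t}_-$ after applying some $\phi \in Aut_S(N)$ and some $r \in Aut(E)$. Paralleling the analysis in the proof of Lemma \ref{lem:stab}, such an $r$ restricting to the identity on $U$ and $M$ must be of the form $r = 1_E - j \beta \pi$ for a unique $\beta \in \Hom(M,U)$. Chasing this $\beta$ through the two long exact sequences used to define $\delta_{\xi}^S$ and comparing with the description of the Baer-sum action on extensions panach\'{e}es (as in \cite{bertrand2013}) identifies the resulting $\lambda$ with $\delta_{\xi}^S(\beta)$. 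Hence the stabilizer equals $\im \delta_{\xi}^S$, and the orbit-stabilizer theorem yields the claimed cardinality.

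I expect the main obstacle to be the stabilizer computation: one must simultaneously keep track of the isomorphism of extensions panach\'{e}es at the level of the whole commutative diagram, including the self-dual compatibility imposed by $\phi \in Aut_S(N)$, and read off the resulting class in $\Ext^1(S(U),U)$ in a way that matches the explicit formula $\delta_{\vert}\beta + \delta_-(\psi_M^{-1} S(\beta))$ defining $\delta_{\xi}^S$. The symmetric part of the construction (i.e., why one lands in $\Ext^1(S(U),U)^S$ rather than all of $\Ext^1(S(U),U)$) is dictated by the self-dual compatibility of $\phi$, and is precisely the reason $\delta_{\xi}^S$ is a sum of the two boundary maps rather than just one.
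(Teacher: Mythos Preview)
Your proposal is correct and follows essentially the same route as the paper. Both arguments parametrize $\tilde{T}^{-1}(\xi)$ by extensions panach\'{e}es over a fixed $\mathfrak{t}_-$ with vanishing obstruction $\gamma$, use the formula $\gamma_{\mathfrak{t}\sbt\lambda}=\gamma_{\mathfrak{t}}+\lambda-\Theta_{U*}^{-1}S(\lambda)$ for transitivity, and identify the stabilizer with $\im\delta_{\xi}^S$ by tracking how the automorphism $r_\beta=1_E-j\beta\pi$ shifts the underlying extension panach\'{e}e; the paper makes the last step explicit via \cite[Lemme A.2]{bertrand2013}, showing that replacing $k$ by $kr_\beta^{-1}$ (and hence, by self-duality, $\rho$ by $S(r_\beta^{-1})\rho$) realizes exactly the translate $\mathfrak{t}\sbt\delta_{\xi}^S\beta$. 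One small point: your ``check independence of the representative'' for the action is not separate from the stabilizer computation---it \emph{is} the inclusion $\im\delta_{\xi}^S\subset\mathrm{Stab}$, so it is cleanest to establish that inclusion first and then deduce well-definedness.
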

\begin{proof}
Transitivity can be verified as in \cite[Th\'{e}or\`{e}m 1]{bertrand2013}. Consider the diagrams $\mathfrak{t}$ and
\[
\mathfrak{t} \sbt \delta_{\xi}^S \beta = (\mathfrak{t} \sbt \delta_- \beta) \sbt \delta_{\vert} \psi_M^{-1} S(\beta).
\]
By \cite[Lemme A.2]{bertrand2013}, $\mathfrak{t} \sbt \delta_- \beta$ is obtained from $\mathfrak{t}$ by replacing $k$ with $k r_{\beta}^{-1}$. Similarly, $\mathfrak{t} \sbt \delta_{\xi}^S \beta$ is obtained from $\mathfrak{t} \sbt \delta_- \beta$ by replacing $S(\rho)$ with $S(r_{\beta}^{-1}) S(\rho)$. Hence, $\mathfrak{t}$ and $\mathfrak{t} \sbt \delta_{\xi}^S \beta$ differ by an automorphism of $\mathfrak{t}_-$ and are therefore equal as self-dual exact sequences. So, $\im \delta_{\xi}^S$ acts trivially.

On the other hand, suppose that $\mathfrak{t}, \mathfrak{t}^{\prime} \in \mathcal{G}^N_{U,M}$ are equal in $\leftexp{S}{\Ext}^1(M,U)$. Without loss of generality, we can assume $\mathfrak{t}_-= \mathfrak{t}^{\prime}_-$. Then, by assumption, there exists $\phi \in Aut_S(N)$ satisfying $\phi k = k^{\prime} r^{-1}_{\beta}$ for some $\beta\in \Hom(M,U)$.  The discussion in the previous paragraph show that replacing $k^{\prime}$ with $k^{\prime} r^{-1}_{\beta}$ gives the same self-dual exact sequence but a different extension panach\'{e}e, namely $\mathfrak{t}^{\prime} \sbt \delta_{\xi}^S \beta$. The map $\phi$ is now an isomorphism of extensions panach\'{e}es $\mathfrak{t} \xrightarrow[]{\sim} \mathfrak{t}^{\prime} \sbt \delta_{\xi}^S \beta$. In particular, if $\mathfrak{t} = \mathfrak{t} \sbt \gamma$ in $\leftexp{S}{\Ext}^1(M,U)$, then $\gamma \in \im \delta_{\xi}^S$.
\end{proof}

\begin{proof}[Proof of Theorem \ref{thm:sdRiedtmann}]
We compute using Burnside's lemma
\begin{align*}
\sum_{N \in Iso(\mathcal{A}_S)} \frac{\mathcal{G}^N_{U,M} }{a_S(N)} &= \sum_{[\mathfrak{s}] \in \leftexp{S}{\Ext}^1(M,U)} \frac{1}{\vert \mbox{Stab}_{Aut_S(N)}(\mathfrak{s}) \vert} \\
&= \sum_{\xi \in \Ext^1(M,U)} \frac{\vert \tilde{T}^{-1}(\xi) \vert}{  \vert \ker \delta^S_{\xi} \vert \vert \Hom(S(U),U)^{-S} \vert }  \;\;\;\;\;\;\;\;\;\; \;\;\;\;\;\; (\hbox{Lemma \ref{lem:stab}} )\\
&= \sum_{\xi \in \Ext^1(M,U)} \frac{\vert \Ext^1(S(U),U)^S \vert}{ \vert \im \delta^S_{\xi} \vert  \vert \ker \delta^S_{\xi} \vert \vert \Hom(S(U),U)^{-S} \vert } \;\;\;\;\;  (\hbox{Lemma \ref{lem:fibre}})\\
&= \sum_{\xi \in \Ext^1(M,U)} \frac{\vert \Ext^1(S(U),U)^S \vert }{ \vert \Hom(M,U) \vert \vert \Hom(S(U),U)^{-S} \vert } \\
&= q^{-\langle M, U \rangle - \mathcal{E}(U)}.
\end{align*}
\end{proof}

\section{Hall modules from quivers with involution}
\label{sec:hallModQuantGrp}
For the remainder of the paper we assume that Hall algebras and modules are given the Ringel twist.

\subsection{Quantum groups and the Hall algebra of a quiver}
\label{subsec:quantGrpSum}
Let $A=(a_{ij})_{i,j=1}^n$ be a symmetric generalized Cartan matrix with associated derived Kac-Moody algebra $\mathfrak{g}$.  The root lattice $\Phi = \bigoplus_{i=1}^n \mathbb{Z} \epsilon_i$ is generated by the simple roots $\epsilon_1, \dots, \epsilon_n$. The Cartan form $(-, -) : \Phi \times \Phi \rightarrow \mathbb{Z}$ satisfies $(\epsilon_i, \epsilon_j) = a_{ij}$.

Let $\mathbb{Q}(v)$ be the field of rational functions in an indeterminate $v$. Define elements of the subring of Laurent polynomials $\mathbb{Z}[v,v^{-1}] \subset \mathbb{Q}(v)$ by
\[
[n]_v = \frac{v^n - v^{-n}}{v-v^{-1}}, \;\;\;\; [n]_v! = \prod_{i =1}^n [i]_v , \;\;\;\;\; \left[ \begin{array}{c} n \\ k \end{array} \right]_v = \frac{[n]_v! }{[k]_v! [n-k]_v!}, \;\;\;\; n, k \in \mathbb{Z}_{\geq 0}.
\]

\begin{Def}
The quantum Kac-Moody algebra $U_v(\mathfrak{g})$ is the $\mathbb{Q}(v)$-algebra generated by symbols $E_i, F_i, T_i, T_i^{-1}$, for $i =1, \dots, n$, subject to the relations
\begin{enumerate}[itemsep=1pt,parsep=1pt]
\item $[T_i ,T_j ]=0$ and $T_i T_i^{-1} = 1$ for $i=1, \dots, n$,

\item$T_i E_j  T_i^{-1} = v^{(\epsilon_i, \epsilon_j)} E_j$ and $T_i F_j  T_i^{-1} = v^{-(\epsilon_i, \epsilon_j)} F_j$ for $i,j=1, \dots, n$,

\item $[E_i, F_j] = \delta_{ij}\frac{T_i - T_i^{-1}}{v - v^{-1}}$ for $i,j=1, \dots, n$, and

\item (quantum Serre relations) for any $i,j =1, \dots, n$, with $i\neq j$, 
\[
\sum_{p=0}^a (-1)^p \left[ \begin{array}{c} a \\ p \end{array} \right]_v   F_i^p F_j F_i^{a-p} =0 , \;\;\;\;\;\;\;\;
\sum_{p=0}^a (-1)^p \left[ \begin{array}{c} a \\ p \end{array} \right]_v  E_i^p E_j E_i^{a-p} =0 
\]
where $a= 1- (\epsilon_i, \epsilon_j)$.
\end{enumerate}
\end{Def}

Let $U^-_v(\mathfrak{g})$ be the subalgebra of $U_v(\mathfrak{g})$ generated by $F_i$, $i=1, \dots, n$. For $\nu \in \mathbb{C}^{\times}$ not a root of unity, the specialized quantum groups $U_{\nu}(\mathfrak{g}), U^-_{\nu}(\mathfrak{g})$ are the $\mathbb{Q}[\nu, \nu^{-1}]$-algebras with generators and relations as above but with $v$ replaced with $\nu$.

We now recall the connection between quantum groups and Hall algebras of quivers. Consider a quiver $Q$ with finite sets of nodes $Q_0$ and arrows $Q_1$ and head and tail maps $h,t:  Q_1 \rightarrow Q_0$. A representation of $Q$ is a pair $(V, v)$ consisting of a finite dimensional $Q_0$-graded vector space $V= \bigoplus_{i \in Q_0} V_i$ and a collection $v = \{ v_{\alpha} \}_{\alpha \in Q_1}$ of linear maps $V_{t(\alpha)} \xrightarrow[]{v_{\alpha}} V_{h(\alpha)}$.  The category $Rep_k(Q)$ of $k$-representations of $Q$ is abelian and hereditary.  The abelian group $\mathbb{Z}^{Q_0}$ of virtual dimension vectors has a natural basis $\{\epsilon_i\}_{i \in Q_0}$ consisting of unit vectors supported at $i\in Q_0$. The simple representation with dimension vector $\epsilon_i$ and all structure maps zero is denoted by $S_i$.

When $Q$ has no loops its symmetrized Euler form in the basis $\{\epsilon_i \}_{i \in Q_0}$ is a symmetric generalized Cartan matrix. Denote by $\mathfrak{g}_Q$ the corresponding derived Kac-Moody algebra and let  $\mathcal{H}_Q$ be the Hall algebra of $Rep_{\mathbb{F}_q}(Q)$.

\begin{Thm}[\cite{ringel1990v3}, \cite{green1995}]
\label{thm:quantumHall}
If $Q$ has no loops, then the subalgebra of $\mathcal{H}_Q$ generated by $[S_i]$, for all $i \in Q_0$, is isomorphic to $U^-_{\nu}(\mathfrak{g}_Q)$.
\end{Thm}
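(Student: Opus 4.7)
\medskip

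\textbf{Proof proposal.} The plan is to construct a homomorphism $\Psi : U^-_\nu(\mathfrak{g}_Q) \to \mathcal{H}_Q$ by $F_i \mapsto [S_i]$, show it is well-defined (i.e.\ the $[S_i]$ satisfy the quantum Serre relations), and then establish injectivity; surjectivity onto the subalgebra $\langle [S_i] \rangle$ is tautological. To set up the target of the Serre check, write
\[
\Sigma_{ij} \;=\; \sum_{p=0}^{a} (-1)^p \left[\begin{array}{c} a \\ p\end{array}\right]_\nu [S_i]^p [S_j] [S_i]^{a-p}, \qquad a = 1 - (\epsilon_i,\epsilon_j),
\]
where the Euler form used in the Ringel twist agrees on simples with the Cartan form of $\mathfrak{g}_Q$ thanks to the absence of loops. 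The defining Hall relation \eqref{eq:hallMult} together with the no-loop hypothesis ensures that all $\nu$-weights on the left-hand side match those of $U^-_\nu(\mathfrak{g}_Q)$, so the compatibility is clean at the level of the relevant twisting factors.

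The most efficient route to $\Sigma_{ij} = 0$ uses Green's bialgebra structure (Theorem \ref{thm:green}) and his non-degenerate form. Since $\mathcal{A} = Rep_{\mathbb{F}_q}(Q)$ is hereditary, the Hopf pairing identity $([S_i][S_j]\cdots, z)_\mathcal{H} = ([S_i]\otimes[S_j]\otimes \cdots, \Delta^{(n-1)} z)_{\mathcal{H}^{\otimes n}}$ reduces the vanishing of $(\Sigma_{ij}, z)_\mathcal{H}$ for arbitrary $z$ to the vanishing of an explicit sum of tensor products of $[S_i], [S_j]$ inside $\Delta^{(a+1)} z$. Because the $[S_i]$ are simple, the only summands of $\Delta^{(a+1)} z$ that pair nontrivially with $[S_i]^{\otimes p}\otimes [S_j]\otimes [S_i]^{\otimes(a-p)}$ come from filtrations of $z$ by simples $S_i,S_j$; these live in the rank-two subcategory $Rep_{\mathbb{F}_q}(Q')$ for the subquiver $Q'$ on the nodes $i,j$. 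Thus the Serre check is reduced to a purely rank-two computation, which one carries out either directly by enumerating representations of the quivers $A_2$ (when $a=2$) or by exploiting the fact that the category of representations of $Q'$ is either of finite type or free of Serre-type obstructions. Once $(\Sigma_{ij}, z)_\mathcal{H} = 0$ for all $z$ in the composition subalgebra, the restriction of $(\cdot,\cdot)_\mathcal{H}$ to that subalgebra — which remains non-degenerate because distinct isoclasses have disjoint support — forces $\Sigma_{ij} = 0$.

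For injectivity, the plan is to compare graded dimensions. Both $U^-_\nu(\mathfrak{g}_Q)$ and the subalgebra $\langle [S_i]\rangle \subset \mathcal{H}_Q$ are $\mathbb{Z}_{\geq 0}^{Q_0}$-graded, and $\Psi$ preserves this grading (the image of a degree-$\alpha$ monomial in the $F_i$ lands in the span of $[V]$ with $\underline{\dim} V = \alpha$). Upper bounds on the graded pieces of $U^-_\nu(\mathfrak{g}_Q)$ are given by Kostant-type partition counts, and a matching lower bound on the Hall side is produced by exhibiting, via PBW-type ordered products of the $[S_i]$ (ordered along an admissible/reduced expression when $Q$ is of finite type, and via a direct induction involving preprojective, regular, and preinjective layers in general), a linearly independent family in each graded piece. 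Linear independence in $\mathcal{H}_Q$ can be detected by pairing against specific $[V]$'s using Green's form. The dimensions then agree, and $\Psi$ is an isomorphism onto its image.

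\emph{Main obstacle.} The delicate step is the Serre relation: the bookkeeping of signs and $\nu$-powers in $\Delta^{(a+1)}[V]$ must exactly match the quantum binomial coefficients in $\Sigma_{ij}$. I would isolate this difficulty by working entirely in the rank-two subcategory and leveraging the explicit classification of its indecomposables (semisimple, $A_2$-type, or Kronecker-type, according as $a = 2, 1, 0$ or $a \geq 3$), reducing the identity to a finite, verifiable set of Hall-number computations that Green's compatibility then bootstraps to the general quantum Serre relation.
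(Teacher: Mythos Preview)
The paper does not supply its own proof of this theorem; it is stated with attribution to Ringel and Green and then used as a black box. So there is no in-paper argument to compare against.

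Your proposal is a reasonable reconstruction of the Ringel--Green strategy, and the Serre-relation half is essentially sound: the reduction to the rank-two subquiver on $\{i,j\}$ is correct (since $\Sigma_{ij}$ is supported on isoclasses whose composition factors are $S_i,S_j$ only), and the resulting computation is finite because the degree $a\epsilon_i+\epsilon_j$ contains only finitely many isoclasses. One quibble: you do not need non-degeneracy of the \emph{restriction} of Green's form to the composition subalgebra to conclude $\Sigma_{ij}=0$; pairing against all $[V]$ in the full Hall algebra already suffices, since the form is diagonal (indeed positive-definite over $\mathbb{R}$) in the isoclass basis.

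The genuine gap is in your injectivity step. Your plan is to match graded dimensions by exhibiting PBW-type ordered monomials in the $[S_i]$ that are linearly independent in $\mathcal{H}_Q$. For finite type this is Ringel's original argument and works. For general (tame or wild) $Q$, however, no such direct PBW construction is available, and the appeal to ``preprojective, regular, and preinjective layers'' does not produce one; in the wild case there is no adequate classification of indecomposables to support a dimension count. Green's actual argument bypasses dimension counting: Lusztig characterized $U^-_\nu(\mathfrak{g}_Q)$ as the quotient of the free algebra on the $F_i$ by the radical of a canonical bilinear form, and Green shows this form coincides (up to harmless rescaling) with the pullback of the Hall form along $F_i\mapsto[S_i]$. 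Since the kernel of the map from the free algebra to the composition subalgebra is exactly that radical, injectivity follows. If you want your sketch to cover the non-finite-type case, this is the mechanism you should invoke rather than a PBW/dimension comparison.
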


\subsection{Representations of a quiver with involution}
\label{subsec:quivRep}

\begin{Def}
An involution of $Q$ is a pair of involutions $Q_i \xrightarrow[]{\sigma} Q_i$, $i=0,1$, such that for all $\alpha \in Q_1$, $h(\sigma(\alpha)) = \sigma(t(\alpha))$ and if $\sigma(t(\alpha)) = h(\alpha)$ then $\sigma(\alpha)=\alpha$.
\end{Def}

Not every quiver admits an involution. For example, a simply laced Dynkin quiver that admits an involution is necessarily of type $A$. On the other hand, the double of a quiver always admits an involution.

Let $(Q,\sigma)$ be a quiver with involution. To construct a duality on $Rep_k(Q)$, let $\iota$ be an involutive field automorphism of $k$ with fixed subfield $k_0$. Fix also functions $s: Q_0 \rightarrow \{ \pm 1 \}$ and $\tau: Q_1 \rightarrow \{ \pm 1 \}$ satisfying $s_i=s_{\sigma(i)}$ and $\tau_{\alpha} \tau_{\sigma(\alpha)} = s_i s_j$ for all $i \xrightarrow[]{\alpha} j$. The functor $S: Rep_k(Q) \rightarrow Rep_k(Q)$ is defined by setting $S(U,u)$ equal to
\[
S(U)_i =\overline{U}_{\sigma(i)}, \;\;\;\;\;\; S(u)_{\alpha} = \tau_{\alpha} u_{\sigma(\alpha)}^{\vee},
\]
where
\[
\overline{U}_{\sigma(i)} = \{ f \in \Hom_{k_0}(U_{\sigma(i)}, k) \; \vert \; f ( c v) = \iota(c) f(v), \;\; v \in U_{\sigma(i)}, \;\; c \in k \}.
\]
Given a morphism $U \xrightarrow[]{\phi} U^{\prime}$, the $i$th component of $S(U^{\prime}) \xrightarrow[]{S(\phi)} S(U)$ is $\phi^{\vee}_{\sigma(i)}$. Put $\Theta= \bigoplus_{i \in Q_0} s_i \cdot \overline{\mbox{ev}}_i$, where $\overline{\mbox{ev}}$ is the composition of the evaluation map with $\iota$. Then $(Rep_k(Q), S, \Theta)$ is a $k_0$-linear abelian category with duality.

Geometrically, a self-dual structure $\psi_M$  on $M \in Rep_k(Q)$ defines a non-degenerate form on $M$ by $\langle v, w \rangle = \psi_M(v)(w)$. This form is linear in the first variable and $\iota$-linear in the second variable. Moreover,  $M_i$ and $M_j$ are orthogonal unless $i = \sigma(j)$, in which case the restriction of the form to $M_i + M_{\sigma(i)}$ is $s_i$-symmetric (resp. $s_i$-hermitian) if $\iota$ is trivial (resp. non-trivial). Finally, the structure maps satisfy
\[
\langle m_{\alpha} v, w \rangle - \tau_{\alpha}  \langle v, m_{\sigma(\alpha)} w \rangle =0, \;\;\;\;\;\; v \in M_{t(\alpha)}, \; w \in M_{\sigma(h(\alpha))}.
\]

When $\iota$ is the identity, $\tau = -1$ and $s$ is constant, self-dual objects are the orthogonal and symplectic representations (referred to as the $s=1$ and $-1$ cases, respectively) introduced by Derksen and Weyman \cite{derksen2002}. For other choices of $(s, \tau)$ the self-dual objects were introduced in \cite{zubkov2005}.

When $\iota$ is non-trivial and $(s, \tau) = (1,-1)$, self-dual objects will be called unitary representations (referred to as the $s=0$ case). In particular, if $k= \mathbb{F}_q$ then $q$ must be a perfect square. We then regard $Rep_{\mathbb{F}_q}(Q)$ as a $\mathbb{F}_{\sqrt{q}}$-linear category and take $\nu_0=\sqrt[4]{q}^{-1}$ and $R=\mathbb{Q}[\nu_0,\nu_0^{-1}]$ in the definition of $\mathcal{H}_Q$. We also rescale $\mathcal{E}$ by a factor of $\frac{1}{2}$. While $\mathcal{E}$ is then only half-integral, the quantity $\nu^{-\mathcal{E}(U)}= \sqrt{q}^{\mathcal{E}(U)}$ remains integral.

\begin{Ex}
The quiver $\begin{tikzpicture}[thick,scale=.33,decoration={markings,mark=at position 0.6 with {\arrow{>}}}]
\draw[postaction={decorate}] (0,0) to  (2,0);
\fill (0,0) circle (4pt);
\fill (2,0) circle (4pt);
\end{tikzpicture}$ has a unique involution, swapping the nodes and fixing the arrow. An orthogonal representation is a skew-symmetric map $V \rightarrow V^{\vee}$. Isometry classes of orthogonal representations are parameterized by $\Lambda^2 k^n \slash GL_n$.
\end{Ex}

\begin{Ex}
The Jordan quiver $\hspace*{-10pt} \begin{tikzpicture}[thick, scale=.33,decoration={markings,mark=at position 0.55 with {\arrow{>}}}]
\path (0,0) edge[postaction={decorate},loop above, out=140, in=50,looseness=0.8, distance=2cm] (0,0);
\fill (0,0) circle (4pt);
\end{tikzpicture} \hspace*{-8pt}$ has a unique involution, the trivial involution. A symplectic representation consists of a symplectic vector space $M$ and $m \in \mathfrak{sp}(M)$. Isometry classes of symplectic representations are parametrized by the adjoint orbits of $Sp_{2n}$ on $\mathfrak{sp}_{2n}$.
\end{Ex}

\begin{Ex}
For any quiver $Q$, let $Q^{op}$ be the opposite quiver, obtained by reversing the orientations of all arrows of $Q$. Then $Q^{\sqcup} = Q \sqcup Q^{op}$ has an involution that sends a node (arrow) of $Q$ to the corresponding node (arrow) of $Q^{op}$. For any duality $(S, \Theta)$ on $Rep_{\mathbb{F}_q}(Q^{\sqcup})$, there are $\mathcal{H}_{Q} \otimes_R \mathcal{H}_Q^{op-cop}$-(co)module isomorphisms
\[
\mathcal{M}_{Q^{\sqcup}} \simeq \mathcal{M}_{H Rep(Q)} \simeq \mathcal{H}_Q.
\]
Indeed, the functor $H Rep_k(Q) \rightarrow (Rep_k(Q^{\sqcup}),S,\Theta)$ sending $(A,B)$ to $A \oplus S(B)$ together with the isomorphism $\left( \begin{smallmatrix}  0 & 1 \\ \Theta &0 \end{smallmatrix} \right) : F \circ S_H \rightarrow S \circ F$ define an equivalence of categories with duality. This gives the first isomorphism. The second follows from Proposition \ref{prop:disjUnion}.
\end{Ex}

The Grothendieck group $K(Rep_k(Q))$ is the free abelian group with basis the set $\mathfrak{S}$ of isomorphism classes of simple representations. Write $\mathfrak{S}= \mathfrak{S}^+ \sqcup \mathfrak{S}^S \sqcup \mathfrak{S}^-$ where $\mathfrak{S}^S$ consists of simples fixed by $S$ and $S(\mathfrak{S}^+)=\mathfrak{S}^-$.

\begin{Prop}
\label{prop:gwGroupQuivers}
There are canonical group isomorphisms
\[
GW(Rep_k(Q)) \simeq \mathbb{Z} \mathfrak{S}^+ \oplus  \bigoplus_{U \in \mathfrak{S}^S} GW(\mathcal{A}_U), \;\;\;\;\;\;\; W(Rep_k(Q)) \simeq \bigoplus_{U \in \mathfrak{S}^S} W(\mathcal{A}_U)
\]
where $\mathcal{A}_U$ is the semisimple abelian subcategory with duality generated by $U$.
\end{Prop}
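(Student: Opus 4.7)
The plan is to construct an explicit map $\Phi$ from right to left, prove it is surjective by a length induction via isotropic reduction, and prove it is injective using the forgetful map to the Grothendieck group. Define $\Phi(\epsilon_i) = [H(S_i)]$ for $i \in \mathfrak{S}^+$, and on each factor $GW(\mathcal{A}_U)$ let $\Phi$ be the map induced by the exact duality-preserving inclusion $\mathcal{A}_U \hookrightarrow Rep_k(Q)$; well-definedness is immediate. Once the Grothendieck-Witt statement is established, the Witt statement will follow from the exact sequence \eqref{eq:GWexact}: the image of $H$ in $GW(Rep_k(Q)) \simeq \mathbb{Z}\mathfrak{S}^+ \oplus \bigoplus GW(\mathcal{A}_U)$ is generated by the hyperbolic classes of the simples, which (using $H(S_i) = H(S_{\sigma(i)})$ in $GW$) consume the entire $\mathbb{Z}\mathfrak{S}^+$ summand and reduce each $GW(\mathcal{A}_U)$ modulo $\mathbb{Z}\cdot [H(U)]$ to produce $W(\mathcal{A}_U)$.

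For surjectivity, I induct on the length of a self-dual $M$. The decisive observation is that any simple subobject $S_j \rightarrowtail M$ with $j \in \mathfrak{S}^+ \cup \mathfrak{S}^-$ is automatically isotropic, because the composition $S_j \to M \to S(M) \to S(S_j) = S_{\sigma(j)}$ is a morphism between non-isomorphic simples and must vanish. Applying the identity $[M] = [H(S_j)] + [M /\!/ S_j]$ in $GW$ from the proof of Proposition \ref{prop:wittGradings} places the hyperbolic correction in the image of $\Phi$ and shortens $M$. If no such simple subobject exists, then $\mathrm{soc}(M) \subset \bigoplus_{U \in \mathfrak{S}^S} \mathcal{A}_U$, and one analyses the restricted form $\mathrm{soc}(M) \to S(\mathrm{soc}(M))$. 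If it is degenerate, its kernel is a nonzero isotropic subobject contained in the semisimple subcategory $\bigoplus \mathcal{A}_U$, so reducing by it places the hyperbolic correction in the image of $\Phi$ and again shortens $M$. If it is non-degenerate, the composition $\mathrm{soc}(M) \hookrightarrow M \twoheadrightarrow S(\mathrm{soc}(M))$ is an isomorphism, yielding an orthogonal splitting $M = \mathrm{soc}(M) \oplus \mathrm{soc}(M)^{\perp}$; since $\mathrm{soc}(M)^\perp$ has zero socle and socle is essential in finite-length categories, $\mathrm{soc}(M)^\perp = 0$, so $M = \mathrm{soc}(M)$, which decomposes into isotypic components $\bigoplus_U M_U$ with each $M_U \in \mathcal{A}_U$ self-dual (the form decomposes diagonally because $\Hom(V, S(U)) = 0$ for distinct fixed simples $U, V$).

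For injectivity, suppose $\sum_{i \in \mathfrak{S}^+} n_i [H(S_i)] + \sum_U [M_U] = 0$ in $GW(Rep_k(Q))$ with $M_U \in \mathcal{A}_U$ self-dual. Apply the forgetful map $GW(Rep_k(Q)) \to K(Rep_k(Q))$, which is well-defined because a metabolic $N$ with Lagrangian $L$ satisfies $[N]_K = [L] + [S(L)] = [H(L)]_K$. The image is
\[
\sum_{i \in \mathfrak{S}^+} n_i\big([S_i] + [S_{\sigma(i)}]\big) + \sum_{U \in \mathfrak{S}^S} \dim_U(M_U) \cdot [U] \;=\; 0
\]
in $K(Rep_k(Q)) = \bigoplus_{j \in Q_0} \mathbb{Z}[S_j]$, and the three groups $\mathfrak{S}^+, \mathfrak{S}^-, \mathfrak{S}^S$ contribute to disjoint subsets of basis elements. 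Hence each $n_i = 0$ and each $\dim_U(M_U) = 0$; the latter forces $M_U = 0$ in $\mathcal{A}_U$ because $\mathcal{A}_U$ is the semisimple category generated by $U$. This establishes injectivity.

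The main obstacle is the length induction in the surjectivity step, specifically verifying that the orthogonal decomposition $M = \mathrm{soc}(M) \oplus \mathrm{soc}(M)^\perp$ obtained when the restricted form on the socle is non-degenerate is a decomposition of representations (not merely of underlying vector spaces). The splitting must be constructed inside the exact category $Rep_k(Q)$: the non-degeneracy of the restricted form gives an iso $\mathrm{soc}(M) \to S(\mathrm{soc}(M))$, and composing the surjection $M \twoheadrightarrow S(\mathrm{soc}(M))$ with its inverse produces the required retraction $M \to \mathrm{soc}(M)$. One must also verify that $\mathrm{soc}(M)^\perp = \ker(M \to S(\mathrm{soc}(M)))$ is an admissible subobject inheriting a self-dual structure, after which the socle essentiality argument applies.
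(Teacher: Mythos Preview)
Your surjectivity argument is correct but more elaborate than the paper's. The paper takes \emph{any} simple subobject $i:U\rightarrowtail N$ and applies Schur's lemma to the map $S(i)\psi_N i:U\to S(U)$: either it vanishes, so $U$ is isotropic and $|N|=|H(U)|+|N/\!/U|$, or it is an isomorphism, giving a self-dual structure on $U$ and an orthogonal splitting $N\simeq_S U\oplus U^\perp$. There is no need to separate the cases $U\in\mathfrak{S}^{\pm}$ versus $U\in\mathfrak{S}^S$, nor to pass to the whole socle; your case split and socle analysis rediscover exactly this dichotomy in a roundabout way.

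Your injectivity argument, however, has a genuine gap. You assume a general element of the source has the form $\sum_{i\in\mathfrak{S}^+} n_i[H(S_i)]+\sum_U[M_U]$ with $M_U$ an honest self-dual object of $\mathcal{A}_U$, but an arbitrary element of $GW(\mathcal{A}_U)$ is a formal difference and need not be of this shape. Equivalently, your argument reduces injectivity to the claim that the forgetful (rank) map $GW(\mathcal{A}_U)\to K(\mathcal{A}_U)\simeq\mathbb{Z}$ is injective, and this is false in general: as the paper itself records immediately after the proposition, $GW(\mathrm{Vect}_{\mathbb{F}_q})\simeq\mathbb{Z}^2$ for orthogonal duality ($s=1$), so two rank-one forms with distinct square classes give a nonzero rank-zero element of $GW(\mathcal{A}_U)$. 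Thus ``$\dim_U(M_U)=0$ forces $M_U=0$'' does not settle the question for the $GW(\mathcal{A}_U)$ components. A correct injectivity argument must produce, for each $U\in\mathfrak{S}^S$, a retraction $GW(Rep_k(Q))\to GW(\mathcal{A}_U)$ (for instance via a d\'evissage/localisation argument with respect to the duality-stable Serre subcategory of representations with no composition factor isomorphic to $U$), rather than relying on the forgetful map to $K$.
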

\begin{proof}
Let $i: U \rightarrowtail N$ be a simple subrepresentation. By Schur's lemma, $S(i) \psi_N i$ is zero or an isomorphism. In the former case $U$ is isotropic and $\vert N \vert = \vert H(U) \vert + \vert N /\!/ U \vert$ in $GW(Rep_k(Q))$. In the latter case $S(i) \psi_N i$ is a self-dual structure on $U$. Therefore $N \simeq_S U \oplus \tilde{N}$, which implies $\vert N \vert = \vert U \vert + \vert \tilde{N} \vert$. As every representation has a finite composition series we can repeatedly apply the above procedure, giving the description of $GW(Rep_k(Q))$. The description of $W(Rep_k(Q))$ now follows from the exact sequence \eqref{eq:GWexact}.
\end{proof}

If $Q$ is acyclic then $\mathfrak{S}=\{S_i \}_{i \in Q_0}$ and $\mathcal{A}_U \simeq Vect_k$ with duality determined by $s$. When $k= \mathbb{F}_q$, $GW(Vect_{\mathbb{F}_q})$ is isomorphic to $\mathbb{Z}$ (resp. $\mathbb{Z}^2$) if $s\in \{-1,0\}$ (resp. $s=1$) and
\[
W (Vect_{\mathbb{F}_q}) \simeq \left\{ \begin{array}{ll} \{1 \}, & \hbox{ if } s=-1 \\ \mathbb{Z}_2, & \hbox{ if } s=0 \\ \mathbb{Z}_4, & \hbox{ if } s=1 \hbox{ and } q \equiv 3 \; (\hbox{mod} \, 4)  \\   \mathbb{Z}_2 \times \mathbb{Z}_2, & \hbox{ if } s=1 \hbox{ and } q \equiv 1 \; (\hbox{mod} \, 4).  \end{array} \right.
\]
In particular, for $s=1$, this is the classical Witt group $W(\mathbb{F}_q)$ of orthogonal forms. Note that the Grothendieck-Witt class of a representation is essentially its dimension vector, with additional decorations at $\sigma$-fixed vertices with $s_i=1$.

We now determine explicitly the function $\mathcal{E}$ for $Rep_k(Q)$. Given representations $V$ and $W$ define
\[
A^0(V,W) = \bigoplus_{i \in Q_0} \hbox{Hom}_k(V_i,W_i), \;\;\;\;\; A^1(V,W) = \bigoplus_{i \xrightarrow[]{\alpha} j \in Q_1} \Hom_k(V_i,W_j).
\]
There is a differential $A^0(V,W) \xrightarrow[]{\delta} A^1(V,W)$ given by $\delta \{f_i \}_i = \{ w_{\alpha} f_i - f_j v_{\alpha} \}_{\alpha}$. The cohomology of $\delta$ fits into the exact sequence (e.g. \cite{crawley1992})
\begin{equation}
\label{eq:ordRes}
0 \rightarrow \Hom(V,W) \rightarrow A^0(V,W) \xrightarrow[]{\delta} A^1(V,W) \rightarrow \Ext^1(V,W) \rightarrow 0.
\end{equation}
It follows that the Euler form depends only on the dimension vectors of its arguments:
\[
\langle d, d^{\prime} \rangle = \sum_{i \in Q_0} d_i d^{\prime}_i - \sum_{i \xrightarrow[]{\alpha} j } d_i d^{\prime}_j, \;\;\;\;\; d, d^{\prime} \in \mathbb{Z}^{Q_0}.
\]

\begin{Prop}
\label{prop:explicitEulerMod}
Let $U \in Rep_k(Q)$. Then
\[
\mathcal{E}(U)=\sum_{i \in Q_0^{\sigma}} \frac{u_i(u_i -s_i)}{2}  + \sum_{i \in Q_0^+} u_{\sigma(i)} u_i - \sum_{\sigma(i) \xrightarrow[]{\alpha} i \in Q_1^{\sigma}} \frac{u_i(u_i + \tau_{\alpha} s_i)}{2} -\sum_{i \xrightarrow[]{\alpha} j  \in Q_1^+} u_{\sigma(i)} u_j
\]
where $u=\mathbf{dim}\, U$. Here $Q_0= Q_0^+ \sqcup Q_0^{\sigma} \sqcup Q_0^+$, where $Q_0^{\sigma}$ consists of the $\sigma$-fixed vertices and $\sigma(Q_0^+) = Q_0^-$. The decomposition of $Q_1$ is analogous.
\end{Prop}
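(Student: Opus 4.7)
The plan is to exploit that $\mathcal{E}$ descends to $K(Rep_k(Q)) = \mathbb{Z}^{Q_0}$ (Theorem \ref{thm:descendtoGroth}), reducing the computation to values on the simples $S_i$. Writing $\tilde{\mathcal{E}}(\mathbf{dim}\, U) := \mathcal{E}(U)$ and applying equation \eqref{eq:kIden} to $W = U \oplus V$ gives the functional equation
\[
\tilde{\mathcal{E}}(d+e) - \tilde{\mathcal{E}}(d) - \tilde{\mathcal{E}}(e) = \langle S(d), e \rangle,
\]
where $S(d)_i = d_{\sigma(i)}$ and $\langle \cdot, \cdot \rangle$ is the Euler form of $Q$. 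A direct reindexing using $h(\sigma(\alpha)) = \sigma(t(\alpha))$ and $t(\sigma(\alpha)) = \sigma(h(\alpha))$ shows $\langle S(d), e \rangle = \langle S(e), d \rangle$; this is forced anyway by the symmetry of the left hand side, and it allows me to conclude that $\tilde{\mathcal{E}}(d) = \tfrac{1}{2}\langle S(d), d\rangle + L(d)$ for a unique $\mathbb{Z}$-linear $L$, uniquely determined by the values $\mathcal{E}(S_i)$.

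Let $q(u)$ denote the right hand side of the proposition. A term-by-term expansion, combined with collapsing sums over $\sigma$-orbits in $Q_0$ and $Q_1$, confirms that the purely quadratic part of $q$ equals $\tfrac{1}{2}\langle S(u), u\rangle$. It therefore suffices to verify that $q(e_i) = \mathcal{E}(S_i)$ for every $i \in Q_0$.

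For the latter, I would compute $\mathcal{E}(S_i)$ directly, using the vanishing of the structure maps of $S_i$. The space $\Hom(S(S_i), S_i)$ vanishes unless $i \in Q_0^{\sigma}$, in which case it is one-dimensional with the involution $\phi \mapsto \Theta^{-1} S(\phi)$ acting by the scalar $s_i$, so $\dim \Hom^{-S} = (1 - s_i)/2$. The space $\Ext^1(S(S_i), S_i)$ decomposes as a direct sum indexed by arrows $\sigma(i) \to i$; on each $\sigma$-fixed summand the involution acts as multiplication by $\tau_\alpha s_i$, while each non-fixed pair $\{\alpha, \sigma(\alpha)\}$ is swapped (with the induced sign). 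Summing over eigenspaces, and noting that each non-fixed pair contributes exactly one arrow to $Q_1^+$, one obtains exactly $q(e_i)$ in both cases $i \in Q_0^{\sigma}$ and $i \in Q_0^+$.

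The main obstacle is the explicit identification of the involution on $\Ext^1(S(S_i), S_i)$, which requires tracking signs through the duality functor $S$ (with its $\tau_\alpha$-twist on structure maps) and through the isomorphism $\Theta$. Once these signs are pinned down, the conclusion follows from the additivity on the Grothendieck group and the match on simples.
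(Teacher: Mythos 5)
Your proof is correct, and it takes a genuinely different route from the paper's. The paper works directly with an arbitrary $U$: it defines the involution on $A^{\bullet}(S(U),U)$ via $\Theta_{U*}^{-1}\circ S$, notes that it anticommutes with $\delta$, extracts the four-term exact sequence
\[
0 \rightarrow \Hom(S(U),U)^{-S} \rightarrow A^0(S(U),U)^{-S} \xrightarrow{\delta} A^1(S(U),U)^S \rightarrow \Ext^1(S(U),U)^S \rightarrow 0,
\]
and reads off $\mathcal{E}(U)=\dim A^0(S(U),U)^{-S}-\dim A^1(S(U),U)^S$, which becomes the displayed formula after splitting the sums over $\sigma$-orbits of $Q_0$ and $Q_1$. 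You instead reuse the already-established additivity $\mathcal{E}(U\oplus V)=\mathcal{E}(U)+\mathcal{E}(V)+\langle S(U),V\rangle$ from equation \eqref{eq:kIden}, observe that the bilinear form $\langle S(\cdot),\cdot\rangle$ on $\mathbb{Z}^{Q_0}$ is symmetric, and thereby reduce to (a) matching the quadratic part $\tfrac{1}{2}\langle S(u),u\rangle$ with the quadratic part of the right-hand side by collapsing sums over $\sigma$-orbits, and (b) checking $q(\epsilon_i)=\mathcal{E}(S_i)$ on simples. Both reductions are sound, and (b) amounts to the same sign-tracking through $S$ and $\Theta$ that the paper does on all of $A^{\bullet}$, only restricted to a single vertex. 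Your approach modularizes the argument nicely by exploiting Theorem \ref{thm:descendtoGroth}; the paper's is more self-contained and does not need the polarization step.

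One small inaccuracy worth flagging: you allow for ``non-fixed pairs $\{\alpha,\sigma(\alpha)\}$ getting swapped'' in $\Ext^1(S(S_i),S_i)$, but every arrow $\alpha:\sigma(i)\to i$ is automatically $\sigma$-fixed, because the axiom ``if $\sigma(t(\alpha))=h(\alpha)$ then $\sigma(\alpha)=\alpha$'' applies verbatim. So all summands of $\Ext^1(S(S_i),S_i)$ are preserved by the involution and carry the scalar $\tau_\alpha s_i$; the non-fixed case is vacuous. This is harmless and consistent with the fact that, on the other side, the $Q_1^{+}$ term of $q(\epsilon_i)$ vanishes: arrows $\sigma(i)\to i$ never lie in $Q_1^{+}$.
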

\begin{proof}
The involution of $A^{\sbt}(S(U),U)$ defined by the composition
\[
A^i(S(U),U) \xrightarrow[]{S} A^i(S(U),S^2(U)) \xrightarrow[]{\Theta_{U *}^{-1}} A^i(S(U),U)
\]
anticommutes with $\delta$. The cohomology of the subcomplex of (anti-)fixed points fits into the exact sequence
\[
0 \rightarrow \Hom(S(U),U)^{-S} \rightarrow A^0(S(U),U)^{- S} \xrightarrow[]{\delta} A^1(S(U),U)^S \rightarrow \Ext^1(S(U),U)^S \rightarrow 0.
\]
The Euler characteristic of this sequence, after some routine calculation, gives the claimed formula for $\mathcal{E}$.
\end{proof}

\subsection{\texorpdfstring{$B_{\sigma}(\mathfrak{g}_Q)$}-module structure of \texorpdfstring{$\mathcal{M}_Q$}.}

In this section we assume that $Q$ has no loops. Theorems \ref{thm:hallModuleExact} and \ref{thm:quantumHall} imply that $\mathcal{M}_Q$ is a representation of $U^-_{\nu}(\mathfrak{g}_Q)$. Since $\mathcal{H}_Q$ itself is a quantum Borcherds algebra \cite[Theorem 1.1]{sevenhant2001}, $\mathcal{M}_Q$ is also a representation of a much larger quantum group. However, without a better understanding of the full structure of $\mathcal{H}_Q$ it is difficult to use this to say much about $\mathcal{M}_Q$. Instead, we focus on incorporating the comodule structure. The na\"{i}ve guess that $\mathcal{M}_Q$ is a Hopf module, possibly after a twist as in Theorem \ref{thm:green}, is already false for the quiver consisting of a single node and no arrows. We therefore seek a replacement of the Hopf module condition.

To begin, we recall a modification of Kashiwara's $q$-boson algebra. With the notation of Section \ref{subsec:quantGrpSum}, suppose we are also given an involution $\sigma$ of the set of simple roots of $\mathfrak{g}$ that preserves the Cartan form.

\begin{Def}[\cite{enomoto2008}]
The reduced $\sigma$-analogue $B_{\sigma}(\mathfrak{g})$ is the $\mathbb{Q}(v)$-algebra generated by symbols $E_i, F_i,T_i, T_i^{-1}$, for  $i=1, \dots, n$, subject to the relations
\begin{enumerate}[itemsep=1pt,parsep=1pt]
\item $[T_i, T_j]=0$, $T_i T_i^{-1} =1$ and $T_i = T_{\sigma(i)}$ for $i=1, \dots, n$,

\item $T_i E_j = v^{(\epsilon_j + \epsilon_{\sigma(j)} , \epsilon_i)} E_j T_i$ and $T_i F_j = v^{-(\epsilon_j + \epsilon_{\sigma(j)} , \epsilon_i)} F_j T_i$ for $i,j=1, \dots, n$,

\item $E_i F_j = v^{-(\epsilon_i, \epsilon_j)} F_j E_i + \delta_{i,j} + \delta_{i, \sigma(j)}T_i$ for $i,j=1, \dots, n$, and

\item the quantum Serre relations for $E_i$ and $F_i$.
\end{enumerate}
\end{Def}

We will use the following characterization of highest-weight $B_{\sigma}(\mathfrak{g})$-modules.

\begin{Prop}[{\cite[Proposition 2.11]{enomoto2008}}]
\label{prop:Vsigma}
Let $\lambda \in \Hom(\Phi, \mathbb{Z})$ be a $\sigma$-invariant integral weight of $\mathfrak{g}$. Then there exists a $B_{\sigma}(\mathfrak{g})$-module $V_{\sigma}(\lambda)$ generated by a non-zero vector $\phi_{\lambda}$ such that $T_i \phi_{\lambda} = v^{\lambda(\epsilon_i)} \phi_{\lambda}$ for all $i =1, \dots, n$ and
\[
\left\{ x \in V_{\sigma}(\lambda) \; \vert \; E_i x =0, \;\;  i=1, \dots, n \right\} = \mathbb{Q}(v) \phi_{\lambda}.
\]
Moreover, $V_{\sigma}(\lambda)$ is irreducible and is unique up to isomorphism.
\end{Prop}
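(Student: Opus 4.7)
The plan is to construct $V_{\sigma}(\lambda)$ as the irreducible quotient of a Verma-type module. First I would define the subalgebra $B^{\geq 0}_{\sigma}(\mathfrak{g}) \subset B_{\sigma}(\mathfrak{g})$ generated by the $E_i$ and $T_i^{\pm 1}$. The $\sigma$-invariance of $\lambda$ ensures that $T_i \mapsto v^{\lambda(\epsilon_i)}$ is compatible with $T_i = T_{\sigma(i)}$, so setting $E_i \cdot 1 = 0$ endows $\mathbb{Q}(v)$ with the structure of a $B^{\geq 0}_{\sigma}(\mathfrak{g})$-module, which I denote $\mathbb{Q}(v)_{\lambda}$. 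Form the induced module $M(\lambda) := B_{\sigma}(\mathfrak{g}) \otimes_{B^{\geq 0}_{\sigma}(\mathfrak{g})} \mathbb{Q}(v)_{\lambda}$ and let $\phi^M_{\lambda} := 1 \otimes 1$. A PBW-type argument using the defining relations yields a triangular decomposition of $B_{\sigma}(\mathfrak{g})$, so $M(\lambda)$ is free of rank one as a module over the subalgebra $B^-_{\sigma}$ generated by the $F_i$. In particular $M(\lambda)$ carries a root lattice grading with $\phi^M_{\lambda}$ spanning the weight-$\lambda$ line and satisfying $T_i \phi^M_{\lambda} = v^{\lambda(\epsilon_i)} \phi^M_{\lambda}$.

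Next I would introduce a contravariant bilinear form on $M(\lambda)$. The defining relations admit an anti-involution $\omega$ of $B_{\sigma}(\mathfrak{g})$ determined by $\omega(E_i) = F_i$, $\omega(F_i) = E_i$ and $\omega(T_i) = T_i$; preservation of relation (3) uses the symmetry of the Cartan form, $\sigma^2 = 1$, and the equality $T_{\sigma(i)} = T_i$, while the Serre relations are manifestly $\omega$-symmetric. Normalizing $(\phi^M_{\lambda}, \phi^M_{\lambda})_{\lambda} = 1$ and imposing $(x u, w)_{\lambda} = (u, \omega(x) w)_{\lambda}$ recursively determines a symmetric bilinear form on $M(\lambda)$ whose radical $R(\lambda) \subset M(\lambda)$ is a $B_{\sigma}(\mathfrak{g})$-submodule not containing $\phi^M_{\lambda}$.

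Set $V_{\sigma}(\lambda) := M(\lambda)/R(\lambda)$ and let $\phi_{\lambda}$ be the image of $\phi^M_{\lambda}$. To verify the annihilator description, suppose $x \in V_{\sigma}(\lambda)$ satisfies $E_i x = 0$ for all $i$. Then for any word with $r \geq 1$, $(x, F_{i_1} \cdots F_{i_r} \phi_{\lambda})_{\lambda} = (E_{i_r} \cdots E_{i_1} x, \phi_{\lambda})_{\lambda} = 0$, so $x$ is orthogonal to every weight space of weight strictly less than $\lambda$; by non-degeneracy of the induced form on the quotient, $x$ lies in the weight-$\lambda$ line $\mathbb{Q}(v)\phi_{\lambda}$. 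Irreducibility follows by a standard argument: a nonzero submodule contains a vector of maximal weight with respect to the root-lattice partial order, such a vector is annihilated by every $E_i$ and hence is a scalar multiple of $\phi_{\lambda}$, forcing the submodule to be all of $V_{\sigma}(\lambda)$. Uniqueness up to isomorphism is automatic: any module with the stated properties is a quotient of $M(\lambda)$ whose unique maximal proper submodule must coincide with $R(\lambda)$.

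The main obstacle is the consistent construction of the bilinear form, i.e.\ verifying that the adjointness prescription does not produce contradictions when an element of $B_{\sigma}(\mathfrak{g})$ is rewritten using the defining relations. The delicate check is that relation (3), $E_i F_j = v^{-(\epsilon_i, \epsilon_j)} F_j E_i + \delta_{i,j} + \delta_{i,\sigma(j)} T_i$, supplies exactly the correction needed for $(E_i F_j u, w)_{\lambda}$ and $(v^{-(\epsilon_i,\epsilon_j)} F_j E_i u, w)_{\lambda}$ to differ by the expected scalar term, which is where the reduced $\sigma$-analogue differs essentially from the $q$-boson algebra. Once this consistency is established, the remaining steps are formal analogues of the classical construction of irreducible highest-weight modules over $U_v(\mathfrak{g})$.
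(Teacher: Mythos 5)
The paper does not supply its own proof of this proposition; it is quoted as \cite[Proposition 2.11]{enomoto2008} and used as a black box, so there is no internal argument to compare your proposal against. Your proposal is a classical highest-weight construction---Verma module $M(\lambda)$, Shapovalov form built from the anti-involution $\omega(E_i)=F_i$, $\omega(F_i)=E_i$, $\omega(T_i)=T_i$, then quotient by the radical---and your verification that $\omega$ preserves relation (3) (using symmetry of the Cartan form, $\sigma^2=1$, and $T_i=T_{\sigma(i)}$) is correct, as are the weight-orthogonality argument for the annihilator and the unique-maximal-submodule argument for irreducibility and uniqueness.

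The gap is the unproved assertion that ``a PBW-type argument using the defining relations yields a triangular decomposition of $B_\sigma(\mathfrak{g})$''; that is exactly where the content of the proposition lies. Without it you cannot conclude that $M(\lambda)\neq 0$, that the $\lambda$-weight space of $M(\lambda)$ is one-dimensional, or that the recursively defined Shapovalov pairing is consistent; your later remark that relation (3) ``supplies exactly the correction needed'' restates the problem rather than resolving it. A direct diamond-lemma check is feasible but not routine: the right-hand side of relation (3) is $\delta_{i,j}+\delta_{i,\sigma(j)}T_i$ rather than the $U_v(\mathfrak{g})$-style $\delta_{i,j}(T_i-T_i^{-1})/(v-v^{-1})$, and this changes the overlap resolutions. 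The cited source avoids this issue by constructing the module explicitly: $V_\sigma(\lambda)$ is realized on the underlying vector space of $U_v^-(\mathfrak{g})$, with $F_i$ acting by left multiplication and $E_i$ by a twisted derivation, which both establishes non-vanishing and yields the triangular decomposition for free, since the subalgebra generated by the $F_i$ then visibly acts freely. To repair your proof you should either carry out the PBW verification honestly or, more efficiently, first exhibit that concrete realization and then run your Verma-module and Shapovalov arguments on top of it.
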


We require two straightforward variations of Proposition \ref{prop:Vsigma}. The first is the extension to $\sigma$-invariant half-integral weights $\lambda \in \Hom(\Phi, \frac{1}{2}\mathbb{Z})$, in which case $V_{\sigma}(\lambda)$ is a $B_{\sigma}(\mathfrak{g}) \otimes_{\mathbb{Q}(v)} \mathbb{Q}(v^{\frac{1}{2}})$-module. The second is an extension to representations of generic specializations $B_{\sigma}(\mathfrak{g})_{\nu}$, the resulting modules written $V_{\sigma}(\lambda)_{\nu}$. The proof in \cite{enomoto2008}  carries over directly in both cases.

Returning to Hall modules, define operators $E_i, F_i, T_i \in \mbox{End}_R(\mathcal{M}_Q)$ as follows (see also \cite{enomoto2009}). Put
\[
F_i [M] = [S_i] \star [M] = \nu^{-\langle M, S_i \rangle - \mathcal{E}(S_i)} \sum_N  G_{S_i,M}^N [N]
\]
and let $E_i$ be the projection of $\rho$ onto $[S_i] \otimes \mathcal{M}_Q \subset \mathcal{H}_Q \otimes \mathcal{M}_Q$:
\[
E_i [N] = \sum_M \nu^{-\langle M, S_i \rangle - \mathcal{E}(S_i)} \frac{a(S_i) a_S(M)}{a_S(N)} G_{S_i,M}^N [M].
\]
Finally,
\[
T_i [M] = \nu^{-( \mathbf{dim} \, M, \epsilon_i) - \mathcal{E}(\epsilon_i) - \mathcal{E}(\epsilon_{\sigma(i)})}[M].
\]

Abusing notation slightly, let
\[
B_{\sigma}(\mathfrak{g}_Q)_{\nu_0} = B_{\sigma}(\mathfrak{g}_Q)_{\nu} \otimes_{\mathbb{Q}[\nu,\nu^{-1}]} \mathbb{Q}[\nu_0, \nu_0^{-1}].
\]
If $\nu_0= \nu$ there is no conflict of notation, but if $\nu_0 = \sqrt{\nu}$, then $B_{\sigma}(\mathfrak{g}_Q)_{\nu_0}$ is different from the specialization of $B_{\sigma}(\mathfrak{g}_Q)$ to $\nu_0$.

\begin{Thm}
\label{thm:redModuleStructure}
The operators $E_i, F_i, T_i$, for $i\in Q_0$, give $\mathcal{M}_Q$ the structure of a $B_{\sigma}(\mathfrak{g}_Q)_{\nu_0}$-module.
\end{Thm}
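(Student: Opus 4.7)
The plan is to verify the four families of defining relations of $B_\sigma(\mathfrak{g}_Q)_{\nu_0}$ for the operators $E_i, F_i, T_i$ acting on the basis $\{[M]\}_{M \in Iso(Rep_{\mathbb{F}_q}(Q)_S)}$ of $\mathcal{M}_Q$.

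The Cartan-type relations are essentially formal. Each $T_i$ is diagonal, so $[T_i, T_j] = 0$ and the invertibility are clear. For the equality $T_i = T_{\sigma(i)}$, note that any self-dual $M$ satisfies $M \cong S(M)$, whence $\mathbf{dim}\,M$ is a $\sigma$-symmetric vector; combined with $\mathcal{E}(\epsilon_i) + \mathcal{E}(\epsilon_{\sigma(i)})$ being manifestly $\sigma$-invariant, the scalars defining $T_i$ and $T_{\sigma(i)}$ agree. The commutation relations of type (2) follow from Proposition \ref{prop:wittGradings}: $F_j$ sends $\mathcal{M}_Q(\gamma)$ into $\mathcal{M}_Q(\gamma + H(\epsilon_j))$, and using the identity $H(\epsilon_j) = \epsilon_j + \epsilon_{\sigma(j)}$ at the level of dimension vectors, the scalar by which $T_i$ is rescaled across this shift is exactly $\nu_0^{-(\epsilon_j + \epsilon_{\sigma(j)}, \epsilon_i)}$; the cocycle identity \eqref{eq:twistCocycle} of Theorem \ref{thm:descendtoGroth} ensures that the $\mathcal{E}$-twists behave correctly. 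The relation for $E_j$ is the mirror calculation, using that $E_j$ shifts the $GW$-grading by $-H(\epsilon_j)$.

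The quantum Serre relations for the $F_i$ reduce to those in $\mathcal{H}_Q$: since $F_i$ is left multiplication by $[S_i]$ and the action $\star$ is associative (Theorem \ref{thm:hallModuleExact}), the Serre relations of Theorem \ref{thm:quantumHall} transport from $U^-_{\nu_0}(\mathfrak{g}_Q) \subset \mathcal{H}_Q$ to $\mathrm{End}_R(\mathcal{M}_Q)$. The Serre relations for the $E_i$ follow by the dual argument, using that $\rho$ is a coaction and that the coproduct on $\mathcal{H}_Q$ also satisfies the Serre relations (the coalgebra is dual to an algebra of the same type).

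The main obstacle is relation (3), the modified commutator
\[
E_i F_j = \nu_0^{-(\epsilon_i,\epsilon_j)} F_j E_i + \delta_{i,j} + \delta_{i,\sigma(j)} T_i.
\]
Expanding both sides on $[M]$ produces sums indexed by configurations consisting of a self-dual exact sequence with isotropic quotient piece $S_j$ together with an isotropic subobject isomorphic to $S_i$ at an intermediate or ambient stage. The strategy, modelled on Green's proof of Theorem \ref{thm:green}, is to reorganise these sums according to the relative position of two simples inside the relevant self-dual object $N$. Schur's lemma forces exactly three mutually exclusive configurations: (a) the copies of $S_i$ and $S_j$ are transverse in the sense that they generate an admissible subobject of the form $S_i \oplus S_j$, giving a bijective correspondence with pairs of isotropic subobjects in the opposite order and producing the $\nu_0^{-(\epsilon_i,\epsilon_j)} F_j E_i$ term; (b) $i = j$ and the inner $S_i$ coincides with the outer one, in which case the isotropic reduction cancels and contributes $\delta_{i,j}$; (c) $i = \sigma(j)$ and the two simples are paired non-trivially by the self-dual form $\psi_N$, forming a hyperbolic plane $H(S_j)$ whose reduction produces the $\delta_{i,\sigma(j)} T_i$ term, with the $T_i$-scalar arising from the $-\langle M, S_i \rangle - \mathcal{E}(S_i)$ twist together with Theorem \ref{thm:sdRiedtmann} counting the self-dual extension data. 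The hard part is the bookkeeping: one must enumerate the Hall numbers $G^N_{S_i,\ast}$ appearing in the iterated compositions, apply the self-dual Second Isomorphism theorem (as in the proof of Theorem \ref{thm:hallModuleExact}) to rearrange nested isotropic filtrations, and verify that the Euler-form and $\mathcal{E}$-twists on the two sides match via the cocycle identity \eqref{eq:twistCocycle} and the relation \eqref{eq:kIden}. Granting this combinatorial identity, the theorem follows.
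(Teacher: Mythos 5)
Your proposal follows essentially the same strategy as the paper: handle the Cartan-type and quantum Serre relations formally (associativity of $\star$ together with Theorem \ref{thm:quantumHall} for the $F_i$, and the Green-form adjunction between $E_i$ and $F_i$ from Lemma \ref{lem:sdGreenForm} for the $E_i$), then reduce relation (3) to a combinatorial identity counting pairs of isotropic simple subobjects of a self-dual $N$, split according to whether the two images span an admissible two-dimensional isotropic subobject, coincide, or generate a hyperbolic plane. The paper makes the bookkeeping precise via the cross/corner machinery of Lemmas \ref{lem:noDescend}, \ref{lem:notEqualNoDes}, \ref{lem:crossStabilizer} and Proposition \ref{prop:fibreCard}, which your sketch defers to ``granting this combinatorial identity.'' One misattribution worth correcting: Theorem \ref{thm:sdRiedtmann} is not what produces the $T_i$-scalar in the hyperbolic ($i=\sigma(j)$) case --- there the contribution to equation \eqref{eq:sdHallIden} is the bare $\delta_{i,\sigma(j)}\delta_{X,Y}a(S_i)a_S(X)$, and the $T_i$-scalar emerges purely from the Euler- and $\mathcal{E}$-twists built into the definitions of $E_i$ and $F_i$ together with $\langle S_i,X\rangle = \langle X,S_{\sigma(i)}\rangle$ for self-dual $X$. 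Rather, Theorem \ref{thm:sdRiedtmann} is invoked to evaluate the residual sum $\sum_N \mathcal{G}^N_{S_i,X}/a_S(N)$ arising in the coincident ($i=j$) case, where the resulting $q$-power exactly cancels the Ringel twist to leave $\delta_{i,j}\cdot 1$.
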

\begin{proof}[Beginning of proof]
The first two parts of the first relation satisfied by $B_{\sigma}(\mathfrak{g}_Q)$ are clear while $T_i = T_{\sigma(i)}$ because $(d, \epsilon_i) = (d, \epsilon_{\sigma(i)})$ for all $\sigma$-symmetric $d \in \mathbb{Z}^{Q_0}$.  The second relation follows from the fact that $F_i$ (resp. $E_i$) increases (resp. decreases) the dimension vector by $\epsilon_i + \epsilon_{\sigma(i)}$. The  quantum Serre relations for $F_i$ follow from Theorems \ref{thm:hallModuleExact} and \ref{thm:quantumHall}. Using Lemma \ref{lem:sdGreenForm} we find
\[
( F_i \xi, \zeta )_{\mathcal{M}} = \frac{1}{\nu^{-2}-1} ( \xi, E_i \zeta )_{\mathcal{M}}, \;\;\;\; \xi, \zeta \in \mathcal{M}_Q.
\]
The quantum Serre relations for $E_i$ then follow from those for $F_i$ and the non-degeneracy of $( \cdot, \cdot )_{\mathcal{M}}$. To complete the proof it remains to verify the third relation, whose proof we break into a number of parts.
\end{proof}

Using Lemma \ref{lem:diffHallNums}, the third relation is seen to be equivalent to the following identity, for all $i, j \in Q_0$ and self-dual representations $X,Y$:
\begin{align}
\label{eq:sdHallIden}
\sum_N \frac{\mathcal{G}^N_{S_i,X} \mathcal{G}^N_{S_j, Y}}{a_S(N)} &= \frac{ \vert \Ext^1(S_{\sigma(j)}, S_i ) \vert }{\vert \Hom(S_{\sigma(j)}, S_i) \vert}  \sum_Z \frac{ \mathcal{G}^Y_{S_i,Z} \mathcal{G}^X_{S_j,Z} }{a_S(Z)} + \delta_{i, \sigma(j)}\delta_{X,Y} a(S_i) a_S(X)\nonumber\\
 &\qquad + \delta_{i,j} \delta_{X,Y} a(S_i) a_S(X) \frac{\vert \Ext^1(X, S_i) \vert \vert \Ext^1(S_{\sigma(i)}, S_i)^S \vert}{\vert \Hom(X,S_i)\vert \vert \Hom(S_{\sigma(i)},S_i)^{-S} \vert} .
\end{align}
We will complete the proof of Theorem \ref{thm:redModuleStructure} by proving this identity.

\begin{figure}
\begin{center}
\begin{tikzpicture}[every node/.style={on grid},  baseline=(current bounding box.center),node distance=1.3]
  \node (A) {$V$};
  \node (B) [below=of A]{$N$};
  \node (C) [below=of B]{$Y$};
  \node (D) [left=of B]{$U$};
  \node (E) [right=of B]{$X$};
  \draw[>->] (A)-- node [left] {\footnotesize $i_V$ }  (B);
  \draw[dashed,->>] (B)-- node [left] {\footnotesize $\pi_V$ }  (C);
  \draw[>->] (D)-- node [above] {\footnotesize $i_U$ }  (B);
  \draw[dashed,->>] (B)-- node [above] {\footnotesize $\pi_U$ }  (E);
\end{tikzpicture}
\hspace{70pt}
\begin{tikzpicture}[every node/.style={on grid},  baseline=(current bounding box.center),node distance=1.3]
  \node (A) {$V$};
  \node (B) [below=of A]{$X$};
  \node (C) [below=of B]{$Z$};
  \node (D) [left=of C]{$Y$};
  \node (E) [left=of D]{$U$};
 \draw[>->] (A)-- node [right] {\footnotesize $\tilde{i}_V$ }  (B);
  \draw[dashed,->>] (B)-- node [right] {\footnotesize $\tilde{\pi}_V$ }  (C);
  \draw[dashed,->>] (D)-- node [above] {\footnotesize $\tilde{\pi}_U$ }  (C);
  \draw[>->] (E)-- node [above] {\footnotesize $\tilde{i}_U$ }  (D);
\end{tikzpicture}
\caption{ (Left) a cross diagram; (right) a corner diagram.}
\label{fig:crossCorner}
\end{center}
\end{figure}
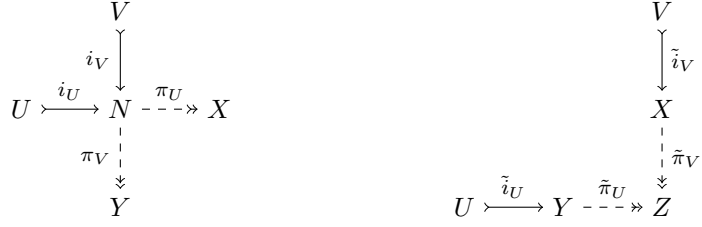

Given representations $U,V$ and self-dual representations $N,X,Y$, the set of crosses of self-dual exact sequences, as in Figure \ref{fig:crossCorner}, is written $C_N(U,V; X,Y)$. The group $Aut_S(N)$ acts on $C_N(U,V;X,Y)$ with orbit space $\tilde{C}_N(U,V;X,Y)$. Then
\[
\sum_N \frac{\mathcal{G}^N_{S_i,X} \mathcal{G}^N_{S_j, Y}}{a_S(N)} = \sum_N \frac{\vert C_N(i,j;X,Y) \vert}{a_S(N)}
\]
where $C_N(i,j;X,Y)=C_N(S_i,S_j;X,Y)$. Similarly, for a self-dual representation $Z$, let $D_Z(U,V;X,Y)$ be the set of all corners of self-dual exact sequences, as in Figure \ref{fig:crossCorner}. Let $\tilde{D}_Z(U,V;X,Y)$ be the orbit space of the free $Aut_S(Z)$-action on $D_Z(U,V;X,Y)$. The right-hand side of equation \eqref{eq:sdHallIden} becomes
\[
\sum_Z \frac{\mathcal{G}^Y_{S_i,Z} \mathcal{G}^X_{S_j,Z}}{a_S(Z)} =\sum_Z \frac{\vert D_Z(i,j;X,Y) \vert}{a_S(Z)} = \sum_Z \vert \tilde{D}_Z(i,j;X,Y) \vert.
\]

In the notation of Figure \ref{fig:crossCorner}, if $i_U$ and $i_V$ present $U \oplus V$ as an isotropic subrepresentation of $N$, by reducing in stages along $U$ and $V$ we obtain a corner  on $Z = N /\!/ U \oplus V$. In this case, we say that the cross descends to this corner.

\begin{Lem}
\label{lem:noDescend}
If $\mathcal{C} \in C_N(i,j;X,Y)$ does not descend to a corner, then $X \simeq_S Y$.
\end{Lem}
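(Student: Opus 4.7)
The plan is to analyze how a cross $\mathcal{C}$ can fail to descend by examining the images $U' := i_U(S_i)$ and $V' := i_V(S_j)$ inside $N$. Descent to a corner requires both that $U' \cap V' = 0$ (so that $U' + V' = U' \oplus V'$ as a subrepresentation of $N$) and that $U' \oplus V'$ is isotropic in $N$. Failure of descent therefore splits into two mutually exclusive cases: (a) $U' \cap V' \ne 0$; and (b) $U' \cap V' = 0$ but $U' \oplus V'$ is not isotropic. In each case I will produce an isometry $X \simeq_S Y$.

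In case (a), simplicity of $S_i$ and $S_j$ forces $U' \cap V' = U' = V'$, so in particular $S_i \simeq S_j$. Because isotropic reduction depends only on the isotropic subobject and not on the chosen embedding, $X = N /\!/ U'$ and $Y = N /\!/ V'$ coincide as self-dual representations, and this case is essentially immediate.

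Case (b) is the substantive one. The obstruction to isotropy of $U' \oplus V'$ is the composition $S(i_U) \psi_N i_V : S_j \to S(S_i) = S_{\sigma(i)}$; it must be nonzero, and simplicity of both source and target forces it to be an isomorphism, so $j = \sigma(i)$. Since $U'$ and $V'$ are individually isotropic while the cross pairing between them is nondegenerate, the subrepresentation $W := U' \oplus V'$ carries a nondegenerate induced form. I would then invoke the standard orthogonal-complement decomposition $N \simeq_S W \oplus W^{\perp}$ for a nondegenerate self-dual subobject. Inside $W$ the orthogonal of $U'$ is $U'$ itself (so $U'$ is a Lagrangian of $W$), hence the orthogonal of $U'$ in $N$ is $U' \oplus W^{\perp}$, and isotropic reduction gives $X \simeq_S W^{\perp}$. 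The parallel computation with $V'$ yields $Y \simeq_S W^{\perp}$, and therefore $X \simeq_S Y$.

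The main obstacle will be justifying, in case (b), the orthogonal splitting $N \simeq_S W \oplus W^{\perp}$ together with admissibility of $W^{\perp}$ as a subobject in the exact category. In the abelian setting this is classical, but here it requires invoking Proposition \ref{prop:sdReduction} and verifying admissibility of the relevant monics and epis, much as in the verification used in the proof of Theorem \ref{thm:hallModuleExact}. Once the decomposition is in place the identifications $X \simeq_S W^{\perp} \simeq_S Y$ follow by direct inspection. It is also worth noting that the two failure modes (a) and (b), corresponding respectively to $j = i$ and $j = \sigma(i)$, match precisely the $\delta_{i,j}$ and $\delta_{i,\sigma(j)}$ diagonal contributions on the right-hand side of the target identity \eqref{eq:sdHallIden}, which is reassuring.
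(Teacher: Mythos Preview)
Your proof is correct and follows essentially the same two-case analysis as the paper: either the simple images coincide (giving $X \simeq_S Y$ immediately), or their sum is nondegenerate and hence isometric to $H(S_i)$, whence both $X$ and $Y$ are identified with its orthogonal complement $W^{\perp}$ in $N$. Your worry about admissibility in case~(b) is unnecessary here, since the ambient category $Rep_{\mathbb{F}_q}(Q)$ is abelian and the orthogonal splitting off of a nondegenerate self-dual subobject is classical.
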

\begin{proof}
The cross fails to descend if and only $\im i_{S_i} + \im i_{S_j}$ is not a two dimensional isotropic subrepresentation. This occurs if $\im  i_{S_i} = \im i_{S_j} $, in which case clearly $X \simeq_S Y$, or if $\im i_{S_i} + \im i_{S_j}$  is non-degenerate, in which case it is isometric to $H(S_i)$. In the latter case $N \simeq_S H(S_i) \oplus X \simeq_S H(S_i) \oplus Y$ and again $X \simeq_S Y$.
\end{proof}

To prove equation \eqref{eq:sdHallIden} we will show that the sum on the right-hand side counts (with weights) crosses that descend to corners while the other two terms count crosses that fail to descend for the two reasons indicated in the proof of Lemma \ref{lem:noDescend}. Since the left-hand side of \eqref{eq:sdHallIden} counts all crosses, the equation will follow.

\begin{Lem}
\label{lem:notEqualNoDes}
There are exactly $a(U) a_S(X)$ crosses in $\tilde{C}_N(U,V;X,Y)$ such that $\im i_U$ and $\im i_V$ intersect trivially and $ \im i_U \oplus \im i_V$ is non-degenerate.
\end{Lem}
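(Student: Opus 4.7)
The plan is first to use the non-degeneracy hypothesis to pin down the isometric structure of $N$, and then to count crosses by a Witt-type transitivity argument combined with Lemma \ref{lem:diffHallNums}.

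Since $\im i_U$ and $\im i_V$ are both isotropic while their sum is non-degenerate, the form $\psi_N$ induces a non-degenerate pairing $\im i_U \otimes \im i_V \to k$; this identifies $V$ with $S(U)$ and realizes $\im i_U \oplus \im i_V$ as a copy of $H(U)$ inside $N$ whose two canonical Lagrangians are $\im i_U$ and $\im i_V$. Taking orthogonal complements yields an orthogonal decomposition $N \simeq_S H(U) \oplus X'$ with $X' := (\im i_U \oplus \im i_V)^\perp$. Since $N /\!/ \im i_U$ and $N /\!/ \im i_V$ are canonically isometric to $X'$, the reduction data of the two self-dual exact sequences force $X \simeq_S X' \simeq_S Y$. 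Thus the lemma is vacuous unless $V \simeq S(U)$, $X \simeq_S Y$ and $N \simeq_S H(U) \oplus X$, and I may assume this setup.

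Next, I would count $|C_N(U,V;X,Y)|_{\mathrm{nd}}$, the number of crosses satisfying the non-degeneracy condition, and divide by $a_S(N)$ using the fact that $Aut_S(N)$ acts freely on $C_N$. The data of such a cross factors into: (a) an orthogonal decomposition $N = H \oplus X'$ with $H \simeq_S H(U)$ and $X' \simeq_S X$; (b) a pair of complementary Lagrangians $(\tilde U, \tilde V) \subset H$ with $\tilde U \simeq U$; and (c) the specific morphism data of the two self-dual exact sequences over the fixed subrepresentations. By Witt cancellation in the category with duality, $Aut_S(N)$ acts transitively on decompositions (a) with stabilizer $Aut_S(H(U)) \times Aut_S(X')$, giving $a_S(N)/(a_S(H(U)) a_S(X))$ decompositions. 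Similarly, $Aut_S(H(U))$ acts transitively on Lagrangian pairs (b), and the stabilizer of such a pair is isomorphic to $Aut(U)$ (an isometry fixing $\tilde U$ is determined by its action on $\tilde U$, since duality forces the compatible action on $\tilde V$), giving $a_S(H(U))/a(U)$ pairs per decomposition. For (c), Lemma \ref{lem:diffHallNums} says that once the isotropic subobject underlying a self-dual exact sequence is fixed, the equivalence class is parametrized by $Aut(U) \times Aut_S(X)$, contributing a factor of $a(U) a_S(X)$ for $\mathfrak{s}_U$ and a factor of $a(V) a_S(Y) = a(U) a_S(X)$ for $\mathfrak{s}_V$. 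Multiplying,
\[
|C_N(U,V;X,Y)|_{\mathrm{nd}} = \frac{a_S(N)}{a_S(H(U)) a_S(X)} \cdot \frac{a_S(H(U))}{a(U)} \cdot \bigl(a(U) a_S(X)\bigr)^2 = a_S(N) \cdot a(U) \cdot a_S(X),
\]
and dividing by $a_S(N)$ yields the asserted count $a(U) a_S(X)$ of $Aut_S(N)$-orbits.

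The main technical obstacle is the Witt-type transitivity statements used above: that $Aut_S(N)$ acts transitively on orthogonal decompositions with a prescribed hyperbolic piece, and that $Aut_S(H(U))$ acts transitively on pairs of complementary Lagrangians of a fixed isomorphism type. In the vector-space setting these are classical, but their formulation in the present categorical framework requires some care; the key input is Proposition \ref{prop:sdReduction}, applied to build up decompositions one isotropic step at a time, together with the explicit structure of $H(U)$.
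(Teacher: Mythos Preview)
Your argument is correct and is essentially the paper's proof recast in orbit--stabilizer language: the paper normalizes a cross using the $Aut_S(N)$-action (first moving $\im i_U\oplus\im i_V$ onto the fixed copy of $H(U)$, then using $Aut_S(H(U))\times Aut_S(X)$ to pin down $i_U$, $\im i_V$ and $\pi_U$) and then counts the residual $Aut(U)\times Aut_S(X)$-torsor of choices for $(i_V,\pi_V)$; you instead compute $|C_N|_{\mathrm{nd}}$ by the same transitivity/stabilizer steps and divide. The Witt-type transitivity you flag is exactly what the paper is using implicitly, and it is not difficult here: for any non-degenerate cross the complement $(\im i_U\oplus\im i_V)^\perp$ is canonically $N/\!/\im i_U\simeq_S X$, so no abstract cancellation theorem is needed---one simply transports one orthogonal decomposition to another via chosen isometries on each summand.

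Two small points. First, the freeness you invoke is only for the non-degenerate part, not for all of $C_N$ (compare Lemma~\ref{lem:crossStabilizer}(1), where the stabilizer is $\Hom(S(U),V)$ for crosses that descend); your proof only needs freeness on $C_N^{\mathrm{nd}}$, and this does hold for arbitrary $U,V$: if $\phi\in Aut_S(N)$ fixes such a cross then $\phi$ is the identity on $\im i_U\oplus\im i_V$ (from $\phi i_U=i_U$, $\phi i_V=i_V$), hence block-diagonal, and compatibility with $\pi_U$ forces the $X'$-block to be the identity as well. Second, in step~(c) you are implicitly using that once $\tilde U$ is fixed the reduction $N/\!/\tilde U$ is already $\simeq_S X$, which is guaranteed by step~(a); this is fine but worth stating.
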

\begin{proof}
The assumptions imply $\im i_U \oplus \im i_V \simeq_S H(U)$ and  $N \simeq_S H(U) \oplus X$. Acting by $Aut_S(H(U))$ and $Aut_S(X)$ (both are subgroups of $Aut_S(N)$) we may take $i_U$ to be the standard inclusion $U \rightarrowtail H(U)$, $i_V$ to factor through the standard inclusion $S(U) \rightarrowtail H(U)$ and $\pi_U$ to be the projection $X \oplus \im i_V \twoheadrightarrow X$. The set of pairs $(i_V, \pi_V)$ completing the cross is a $Aut(U)  \times Aut_S(X)$-torsor, with different pairs giving different classes in $\tilde{C}_N(U,V;X,Y)$.
\end{proof}

\begin{Lem}
\label{lem:crossStabilizer}
Let $\mathcal{C} \in C_N(U,V;X,Y)$.
\begin{enumerate}[leftmargin=0cm,itemindent=.6cm,labelwidth=\itemindent,labelsep=0cm,align=left]
\item If $\mathcal{C}$ descends to a corner, then $\text{\textnormal{Stab}}_{Aut_S(N)} (\mathcal{C}) \simeq \Hom(S(U), V)$.
\item If $U =S_i$, $V=S_j$ and $\im i_U + \im i_V$ is non-degenerate, then $\text{\textnormal{Stab}}_{Aut_S(N)} (\mathcal{C})$ is trivial.
\end{enumerate}
\end{Lem}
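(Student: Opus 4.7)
The plan is to parametrize $\text{\textnormal{Stab}}_{Aut_S(N)}(\mathcal{C})$ by the deviation $u = \phi - 1_N$ and convert the cross-fixing conditions into linear constraints on $u$. An isometry $\phi$ stabilizes $\mathcal{C}$ iff $\phi i_U = i_U$, $\pi_U \phi = \pi_U$, and the analogous identities hold with $V$ in place of $U$. These translate into $u|_U = 0$, $u|_V = 0$, $u(U^\perp) \subset U$, and $u(V^\perp) \subset V$. Setting $E = U^\perp \cap V^\perp$ and using $U \cap V = 0$, one obtains $u(E) \subset U \cap V = 0$, so $u$ factors as $N \twoheadrightarrow N/E \xrightarrow{\bar u} U \oplus V \hookrightarrow N$.

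For part (1), I would exploit the fact that when $\mathcal{C}$ descends, $U \oplus V$ is isotropic with orthogonal $E$, giving canonical identifications $U^\perp / E \simeq S(V)$ and $V^\perp / E \simeq S(U)$ inside $N/E$ induced by $\psi_N$. The constraint that the $U$-component of $\bar u$ vanishes on $V^\perp/E$ forces it to factor through $N/V^\perp \simeq S(V)$, yielding $\alpha : S(V) \to U$; symmetrically, the $V$-component yields $\beta : S(U) \to V$. Thus $\bar u$ is uniquely encoded by the pair $(\alpha, \beta) \in \Hom(S(V), U) \oplus \Hom(S(U), V)$. The isometry condition $S(\phi)\psi_N \phi = \psi_N$ expands to
\[
\psi_N u + S(u)\psi_N + S(u)\psi_N u = 0,
\]
in which the quadratic term vanishes because $\im u \subset U \oplus V$ is isotropic, so $S(i_{U\oplus V})\psi_N i_{U\oplus V} = 0$. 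The remaining linear equation $\psi_N u = -S(u)\psi_N$, interpreted via the perfect pairing $\psi_N$ induces between $U \oplus V$ and $N/E$, becomes a skew-symmetry relation that ties $\alpha$ to $S(\beta)\Theta_V$. Hence $\beta$ alone determines the pair, and the stabilizer is identified with $\Hom(S(U), V)$.

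For part (2), the non-degeneracy of $\im i_U + \im i_V$, combined with the isotropy of each summand, forces $\psi_N$ to pair $S_i$ non-degenerately with $S_j$, so $S_j \simeq S(S_i)$ and the subobject is isometric to $H(S_i)$. Orthogonality then yields $N \simeq_S H(S_i) \oplus X$. Any $\phi \in \text{\textnormal{Stab}}_{Aut_S(N)}(\mathcal{C})$ fixes $S_i$ and $S(S_i)$ pointwise, hence acts as the identity on $H(S_i)$; being an isometry, it preserves $X = H(S_i)^\perp$. Applying $\pi_U \phi = \pi_U$ to the decomposition $S_i^\perp = S_i \oplus X$ then forces $\phi|_X = 1_X$, so $\phi = 1_N$.

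The main obstacle will be the bookkeeping in part (1): tracking signs and the various duality identifications implicit in the perfect pairing between $U \oplus V$ and $N/E$, and verifying that the isometry equation really cuts $\Hom(S(V), U) \oplus \Hom(S(U), V)$ down to a single copy. This halving phenomenon is in the same spirit as the $(-S)$-symmetry arguments used in the proof of Theorem \ref{thm:sdRiedtmann}, and should follow cleanly from the duality isomorphism $\Hom(S(V), U) \simeq \Hom(S(U), V)$ induced by $\Theta_V$.
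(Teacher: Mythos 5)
Your argument is correct and follows the same essential route as the paper's proof: parametrize the stabilizer by something living in $\Hom(S(U\oplus V),U\oplus V)$, use the two factorizations coming from $U^\perp$ and $V^\perp$ to kill the diagonal components, and then use the isometry condition ($(-S)$-antisymmetry) to identify the off-diagonal pair $(\alpha,\beta)$ with a single copy of $\Hom(S(U),V)$. The difference is one of packaging: the paper's proof leans on Lemma \ref{lem:stab} to assert that $\phi|_{E_U}$ and $\phi|_{E_V}$ factor through $X\to U$ and $Y\to V$, and that $\phi$ restricted to $E_{U\oplus V}$ is governed by an element of $\Hom(S(U\oplus V),U\oplus V)^{-S}$ after killing the $Z\to U\oplus V$ contribution, whereas you rebuild this from scratch by setting $u=\phi-1_N$ and showing directly that $u$ factors through $N/E\to U\oplus V$ with $E=U^\perp\cap V^\perp$. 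Your version is more self-contained and arguably more transparent; the paper's is shorter because the bookkeeping has already been done once in Lemma \ref{lem:stab}.

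Two small points. First, you should check the composition in the skew-symmetry relation: as written, $S(\beta)\Theta_V$ does not typecheck ($S(\beta):S(V)\to S^2(U)$ while $\Theta_V:V\to S^2(V)$); the intended map is $\Theta_U^{-1}S(\beta):S(V)\to U$, matching the target of $\alpha$, and likewise the duality isomorphism you invoke at the end is $\Hom(S(V),U)\xrightarrow{\sim}\Hom(S(U),V)$, $f\mapsto\Theta_V^{-1}S(f)$. Second, you observe that $\beta$ determines $\phi$ but do not explicitly check the converse, namely that every $\beta\in\Hom(S(U),V)$ produces a genuine element of $\text{Stab}_{Aut_S(N)}(\mathcal{C})$; this is immediate from $u^2=0$ (since $U\oplus V\subset E=\ker u$), so $\phi=1_N+u$ is invertible with inverse $1_N-u$, and the linear relation you derived is exactly the isometry condition. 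These are minor omissions of the kind you already anticipated; the proof is sound.
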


\begin{proof}
Suppose that $\mathcal{C}$ descends to a corner and let $\phi \in \textnormal{Stab}_{Aut_S(N)} (\mathcal{C})$. From the proof of Lemma \ref{lem:stab}, the restrictions $\phi_{\vert E_U}$ and $\phi_{\vert E_V}$ factor through maps $X \rightarrow U$ and $Y \rightarrow V$, respectively. As $\phi$ also stabilizes the induced self-dual exact sequence
\[
0 \rightarrow U \oplus V \xrightarrow[]{i_U \oplus i_V} N \dashrightarrow Z \rightarrow 0,
\]
the restriction of $\phi$ to $E_{U \oplus V} = E_U \cap E_V$ factors through a map $Z \rightarrow U \oplus V$. Compatibility with $\phi_{\vert E_U}$ and $\phi_{\vert E_V}$ requires that this map vanish. Then $\phi$ is uniquely determined by an element of $\Hom(S(U \oplus V), U \oplus V)^{-S}$. Again, compatibility with $\phi_{\vert E_U}$ and $\phi_{\vert E_V}$ imply that only the summand
\[
(\Hom(S(U),V) \oplus \Hom(S(U),V) )^{-S} \simeq \Hom(S(U),V)
\]
contributes to $\phi$. Reversing this argument shows that each element of $\Hom(S(U),V)$ gives rise to an element of $\mbox{Stab}_{Aut_S(N)} (\mathcal{C})$. The proof of the second statement is similar. 
\end{proof}

\begin{Prop}
\label{prop:fibreCard}
There are exactly $\vert \Ext^1(S(U), V) \vert$ elements of $\bigsqcup_N \tilde{C}_N(U,V; X,Y)$ that descend to each element of $\bigsqcup_Z \tilde{D}_Z(U,V; X,Y)$. 
\end{Prop}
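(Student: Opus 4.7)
The plan is to show that the fiber of the descent map over a fixed $[\mathcal{D}]$ carries a natural free and transitive action of $\Ext^1(S(U), V)$, from which the cardinality claim is immediate. First, I fix a representative corner $\mathcal{D}$ and observe that a cross $\mathcal{C}$ on $N$ descending to $\mathcal{D}$ is equivalent to the data of a self-dual exact sequence $0 \to U \oplus V \to N \dashrightarrow Z \to 0$ whose two partial reductions along $U$ and along $V$ reproduce the halves of $\mathcal{D}$. The underlying ordinary extension $\mathfrak{t}_- \in \Ext^1(Z, U \oplus V)$ is pinned down by the corner through the canonical decomposition $\Ext^1(Z, U \oplus V) = \Ext^1(Z, U) \oplus \Ext^1(Z, V)$, so the problem reduces to classifying the self-dual lifts of a fixed $\mathfrak{t}_-$ that are compatible with $\mathcal{D}$.

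By Lemma \ref{lem:fibre} applied to the pair $(U \oplus V, Z)$, the set $\tilde{T}^{-1}([\mathfrak{t}_-])$ of all self-dual lifts is a torsor for $\Ext^1(S(U \oplus V), U \oplus V)^S / \im \delta^S_{\mathfrak{t}_-}$. The key computation is the decomposition
\[
\Ext^1(S(U \oplus V), U \oplus V)^S \cong \Ext^1(S(U), U)^S \oplus \Ext^1(S(U), V) \oplus \Ext^1(S(V), V)^S,
\]
where the middle summand arises as the $S$-symmetric diagonal inside $\Ext^1(S(U), V) \oplus \Ext^1(S(V), U)$ (the two summands being swapped by the duality). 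A parallel application of Lemma \ref{lem:fibre} to the pairs $(U, Z)$ and $(V, Z)$ shows that the first and third summands, modulo the appropriate images of $\delta^S$, classify the self-dual structures on $Y = N /\!/ V$ and $X = N /\!/ U$, respectively. Forcing these to agree with the corner pins down the first and third summands, leaving the middle summand $\Ext^1(S(U), V)$ as the remaining torsor.

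Finally, I pass to the quotient sets $\tilde{C}_N$ and $\tilde{D}_Z$ by invoking Lemma \ref{lem:crossStabilizer}(1), which gives $|\text{Stab}_{Aut_S(N)}(\mathcal{C})| = |\Hom(S(U), V)|$ for any cross descending to a corner, together with the freeness of the $Aut_S(Z)$-action on $D_Z$. A routine orbit-stabilizer bookkeeping confirms that the $\Ext^1(S(U), V)$-torsor structure descends to the quotients, yielding exactly $|\Ext^1(S(U), V)|$ orbits in the fiber of $\bigsqcup_N \tilde{C}_N \to \bigsqcup_Z \tilde{D}_Z$ over $[\mathcal{D}]$.

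The main obstacle is verifying the compatibility of the decomposition of $\Ext^1(S(U \oplus V), U \oplus V)^S / \im \delta^S$ with the constraint of matching the individual reductions: the off-diagonal summand $\Ext^1(S(U), V)$ must act trivially on each individual self-dual reduction while the two diagonal summands must act faithfully on exactly one. This should follow from a careful diagram chase comparing the six-term obstruction complexes of Theorem \ref{thm:sdRiedtmann} for the three pairs $(U \oplus V, Z)$, $(U, Z)$, and $(V, Z)$, leveraging the additivity of the obstruction class $\gamma_{\mathfrak{t}}$ of Lemma \ref{lem:symmObstr}.
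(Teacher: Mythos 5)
Your proposal shares the paper's central idea — the $\mathbb{Z}_2$-equivariant decomposition of $\Ext^1(S(U\oplus V), U\oplus V)^S$ into $\Ext^1(S(U),U)^S \oplus \Ext^1(S(U),V) \oplus \Ext^1(S(V),V)^S$ with $\Ext^1(S(U),V)$ arising as the off-diagonal piece — but the route to the count is different, and the difference matters. The paper does not stay inside the self-dual torsor of Lemma \ref{lem:fibre} to do the counting. Instead, it reparametrizes a cross lifting a fixed corner by the pair of commutative diagrams in \eqref{diag:commPair}: the data $(E_U;l_U,\pi_U)$ for the left diagram and $(E_V;l_V,\pi_V)$ for the right, with $N$ recovered as a pushout. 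Each of these is classified (up to isomorphism) by the \emph{ordinary} extensions panach\'{e}es torsor of Grothendieck, giving $\Ext^1(S(V),U)$- and $\Ext^1(S(U),V)$-torsors with no $\im\delta^S$ correction term at all; the compatibility condition (that the pushout admit a self-dual structure) then carves out exactly the diagonal $\Ext^1(S(U),V)$. Lemma \ref{lem:fibre} is invoked only for the transitivity/existence step.

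The gap in your version is precisely the point you flag as the ``main obstacle,'' and it is not routine. To run your argument you must show (i) that the stabilizer $\im\delta^S_{\mathfrak{t}_-}$ of Lemma \ref{lem:fibre} decomposes compatibly with the three-summand splitting, in particular that its intersection with the middle summand $\Ext^1(S(U),V)$ is trivial, and (ii) that fixing the isometry types of $N/\!/U$ and $N/\!/V$ cuts out exactly the cosets of the first and third summands, again modulo the relevant $\im\delta^S$ of the pairs $(U,Z)$ and $(V,Z)$. Neither is automatic: the map $\delta^S_{\mathfrak{t}_-}$ is built from the \emph{sum} $\delta_{\vert}\beta + \delta_-(\psi^{-1}S(\beta))$, which couples $\Hom(Z,U)$ and $\Hom(Z,V)$ through the duality, so its image need not split along your decomposition, and the cross-terms can land in the middle summand. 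Your deferral to ``a careful diagram chase comparing the six-term obstruction complexes'' names the right objects but doesn't yet establish the needed exact sequence; this is exactly the bookkeeping the paper's reparametrization via \eqref{diag:commPair} is designed to avoid. Also note a small labelling slip: the first summand $\Ext^1(S(U),U)^S$ affects the self-dual structure on $Y = N/\!/V$ (where $U$ survives), not as stated.
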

\begin{proof}
Given a corner as in Figure \ref{fig:crossCorner}, consider the pullback of $\tilde{\pi}_U$ along $\tilde{\pi}_V$
\[
\begin{tikzpicture}[every node/.style={on grid},  baseline=(current bounding box.center),node distance=1.4]
  \node (A) {$V$}; 
  \node (B) [right=of A] {$V$};
  \node (C) [below=of A] {$\mathbb{E}$};
  \node (D) [right=of C] {$\tilde{E}_V$};
  \node (E) [below=of C] {$\tilde{E}_U$};
  \node (F) [right=of E] {$Z$};
  \node (G) [left=of C] {$U$};
  \node (H) [left=of E] {$U$};  
  
  \draw[double equal sign distance] (A)--(B);
  \draw[double equal sign distance] (G)--(H);
  \draw[>->] (A)-- node [left] {\footnotesize $j^{\prime}_V$ } (C);
  \draw[>->] (B)-- node [right] {\footnotesize $\tilde{j}_V$ } (D);
  \draw[->>] (D)-- node [right] {\footnotesize $\tilde{\pi}_V$ } (F);
  \draw[->>] (C)-- node [left] {\footnotesize $\pi^{\prime}_V$ } (E);
  \draw[->>] (E)-- node [below] {\footnotesize $\tilde{\pi}_U$ } (F);
  \draw[->>] (C)-- node [above] {\footnotesize $\pi^{\prime}_U$ } (D);
  \draw[>->] (G)-- node [above] {\footnotesize $j^{\prime}_U$ } (C);
  \draw[>->] (H)-- node [below] {\footnotesize $\tilde{j}_U$ } (E);
\end{tikzpicture}
\]
and the resulting short exact sequence
\[
U \oplus V \rightarrowtail  \mathbb{E} \twoheadrightarrow Z.
\]
From Lemma \ref{lem:fibre}, the group $\Ext^1(S(U \oplus V), U \oplus V)^S$ acts transitively on the set of lifts of this sequence to self-dual extensions of $Z$ by $U \oplus V$. Pick such a lift and act by the subgroups $\Ext^1(S(U),U)^S$ and $\Ext^1(S(V),V)^S$ to modify it so that the reductions of the central term $N$ by $U$ and $V$ are isometric to $X$ and $Y$, respectively. By construction, $N$, viewed as a cross, descends to the original corner. The orthogonals of $U$ and $V$ in $N$ give rise to two commutative diagrams
\begin{equation}
\label{diag:commPair}
\begin{tikzpicture}[every node/.style={on grid},  baseline=(current bounding box.center),node distance=1.4]
  \node (A) {$U$};
  \node (B) [right=of A]{$\mathbb{E}$}; 
  \node (C) [right=of B]{$\tilde{E}_V$};
  \node (D) [below=of A]{$U$};
  \node (E) [right=of D]{$E_U$}; 
  \node (F) [right=of E]{$X$};
  \node (G) [below=of E]{$S(V)$};
  \node (H) [right=of G]{$S(V)$};
  
  \draw[>->] (A)-- node [above] {\footnotesize $j^{\prime}_U$ }(B);
  \draw[->>] (B)--  node [above] {\footnotesize $\pi^{\prime}_U$ } (C);
  \draw[>->] (D)-- (E);
  \draw[->>] (E)-- node [above] {\footnotesize $\pi_U$ }  (F);
  \draw[double equal sign distance] (A)-- (D);
  \draw[double equal sign distance] (G)-- (H);
  \draw[>->] (B)--  node [left] {\footnotesize $l_U$ }  (E);
  \draw[>->] (C)--   (F);
  \draw[->>] (E)--   (G);
  \draw[->>] (F)--  (H);
  \end{tikzpicture}
\hspace{55pt}
\begin{tikzpicture}[every node/.style={on grid},  baseline=(current bounding box.center),node distance=1.4]
  \node (A) {$V$};
  \node (B) [right=of A]{$\mathbb{E}$}; 
  \node (C) [right=of B]{$\tilde{E}_U$};
  \node (D) [below=of A]{$V$};
  \node (E) [right=of D]{$E_V$}; 
  \node (F) [right=of E]{$Y$};
  \node (G) [below=of E]{$S(U)$};
  \node (H) [right=of G]{$S(U)$};
  
  \draw[>->] (A)-- node [above] {\footnotesize $j^{\prime}_V$ }(B);
  \draw[->>] (B)--  node [above] {\footnotesize $\pi^{\prime}_V$ } (C);
  \draw[>->] (D)--  (E);
  \draw[->>] (E)-- node [above] {\footnotesize $\pi_V$ }  (F);
  \draw[double equal sign distance] (A)-- (D);
  \draw[double equal sign distance] (G)-- (H);
  \draw[>->] (B)-- node [left] {\footnotesize $l_V$ }   (E);
  \draw[>->] (C)--   (F);
  \draw[->>] (E)--   (G);
  \draw[->>] (F)--  (H);
  \end{tikzpicture}
\end{equation}
Observe that the only data in the left (say) diagram not determined by the corner is $(E_U; l_U, \pi_U)$ and that this data is determined only up to automorphisms of $E_U$. Moreover, the pushout of $l_U$ along $l_V$ recovers $N$ up to isomorphism. Therefore, to count classes of crosses that lift the corner it suffices to count classes of pairs $(E_U; l_U, \pi_U)$ and $(E_V; l_V, \pi_V)$ that make diagrams \eqref{diag:commPair} commute and that are compatible in the sense that the resulting central term $N$ admits a self-dual structure.   From \cite[\S 9.3.8.b]{grothendieck1972} (see also \cite{green1995}) the classes of tuples $(E_U; l_U, \pi_U)$ making the left diagram commute is an $\Ext^1(S(V),U)$-torsor. Similarly, the data for the right diagram is an $\Ext^1(S(U),V)$-torsor. Compatibility requires these group actions be dependent. Precisely, only the subgroup
\[
\Ext^1(S(U),V) \simeq (\Ext^1(S(V),U) \oplus \Ext^1(S(U),V)^S   \subset   \Ext^1(S(U \oplus V), U \oplus V)^S
\]
preserves the condition that the central term is self-dual. This shows that the set of crosses in $\bigsqcup_N \tilde{C}_N(U,V; X,Y)$ lifting the original corner is a torsor for $\Ext^1(S(U),V)$, and completes the proof.
\end{proof}

\begin{proof}[Completion of the proof of Theorem \ref{thm:redModuleStructure}]
Write
\[
C_N(i,j;X,Y) = C^{(1)}_N(i,j;X,Y) \bigsqcup C^{(2)}_N(i,j;X,Y)
\]
with $C^{(1)}_N(i,j;X,Y)$ the set of crosses that descend to corners. Burnside's lemma and the first part of Lemma \ref{lem:crossStabilizer} give
\[
\sum_N \frac{ \vert C_N(i,j;X,Y) \vert}{a_S(N)} = \sum_N  \frac{\vert \tilde{C}^{(1)}_N(i,j;X,Y)\vert}{\vert \Hom (S_{\sigma(i)}, S_j) \vert}  +  \sum_N \frac{\vert C^{(2)}_N(i,j;X,Y) \vert}{a_S(N)}.
\]
By Proposition \ref{prop:fibreCard} the first sum is
\[
\sum_N  \frac{\vert \tilde{C}^{(1)}_N(i,j;X,Y)\vert}{\vert \Hom (S_{\sigma(i)}, S_j) \vert} = \frac{\vert \Ext^1  (S_{\sigma(j)}, S_i) \vert }{\vert \Hom(S_{\sigma(j)}, S_i) \vert} \sum_Z  \vert \tilde{D}_Z(i,j;X,Y)\vert
\]
while Lemma \ref{lem:notEqualNoDes} and the second part of Lemma \ref{lem:crossStabilizer} give for the second sum
\[
\sum_N \frac{\vert C^{(2)}_N(i,j;X,Y) \vert}{a_S(N)} = \delta_{X,Y}\delta_{i,j}a(S_i) a_S(X) \sum_N \frac{\mathcal{G}^N_{S_i,X}}{a_S(N)}  + \delta_{X,Y}\delta_{i, \sigma(j)} a(S_i) a_S(X).
\]
Here the crosses counted by the first (resp. second) term on the right-hand side fail to descend because $\im i_{S_i} = \im i_{S_j}$ (resp. $\im i_{S_i} + \im i_{S_j}$ is non-degenerate). Finally, using Theorem \ref{thm:sdRiedtmann} to evaluate $\sum \frac{\mathcal{G}^N_{S_i,X}}{a_S(N)}$ establishes equation \eqref{eq:sdHallIden}.
\end{proof}

We now turn to the decomposition of $\mathcal{M}_Q$ into irreducible $B_{\sigma}(\mathfrak{g}_Q)_{\nu_0}$-modules.

\begin{Def}
A non-zero element $\xi \in \mathcal{M}_Q$ is called cuspidal if $E_i \xi =0$ for all $i \in Q_0$.
\end{Def}

Fix a homogeneous orthogonal basis $\mathcal{C}_Q$ for the $R$-module of cuspidals. Given $\xi \in \mathcal{C}_Q$, define a $\sigma$-invariant weight $\lambda_{\xi}$ by
\[
\lambda_{\xi}(\epsilon_i)=- (\mathbf{dim}\, \xi,\epsilon_i) - \mathcal{E}(\epsilon_i) - \mathcal{E}(\epsilon_{\sigma(i)}).
\]

\begin{Thm}
\label{thm:hallModDecomp}
The Hall module $\mathcal{M}_Q$ admits an orthogonal direct sum decomposition into irreducible highest weight $B_{\sigma}(\mathfrak{g}_Q)_{\nu_0}$-modules generated by $\mathcal{C}_Q$:
\[
\mathcal{M}_Q = \bigoplus_{\xi \in \mathcal{C}_Q} V_{\sigma}(\lambda_{\xi})_{\nu_0}.
\]
\end{Thm}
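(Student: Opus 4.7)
The plan is to exhibit $\mathcal{M}_Q$ as a semisimple $B_\sigma(\mathfrak{g}_Q)_{\nu_0}$-module whose irreducible summands are cyclically generated by a basis of the cuspidal subspace, and then identify each summand with some $V_\sigma(\lambda)_{\nu_0}$ via Proposition \ref{prop:Vsigma}. By construction, each $\xi \in \mathcal{C}_Q$ is a highest weight vector of weight $\lambda_\xi$: the condition $E_i\xi = 0$ for all $i$ is the definition of cuspidal, and the eigenvalue $T_i\xi = \nu^{\lambda_\xi(\epsilon_i)}\xi$ is immediate from the definitions of $T_i$ and $\lambda_\xi$.

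The first main step is semisimplicity. Lemma \ref{lem:sdGreenForm} supplies the adjunction $(F_i\alpha, \beta)_{\mathcal{M}} = \frac{1}{\nu^{-2}-1}(\alpha, E_i\beta)_{\mathcal{M}}$ (already verified in the proof of Theorem \ref{thm:redModuleStructure}), while $T_i$ acts as a scalar on each dimension-graded piece. Hence the orthogonal complement $W^{\perp}$ of any $B_\sigma(\mathfrak{g}_Q)_{\nu_0}$-submodule $W$ is again a submodule. Since $R \subset \mathbb{R}$ and $([M],[N])_{\mathcal{M}} = \delta_{M,N}/a_S(M)$ with $a_S(M) > 0$, the form is positive definite on each (finite-dimensional) dimension-graded piece; therefore every submodule $W$ admits a module complement and $\mathcal{M}_Q = W \oplus W^{\perp}$.

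Next I would show that $\mathcal{M}_Q$ is generated by $\mathcal{C}_Q$. Let $\mathcal{N}$ be the submodule generated by $\mathcal{C}_Q$. If $\mathcal{N}^{\perp}\neq 0$, choose a homogeneous $w \in \mathcal{N}^{\perp}$ of minimal total dimension. Each $E_i w$ lies in $\mathcal{N}^{\perp}$ and has strictly smaller total dimension, so $E_iw = 0$; thus $w$ is cuspidal and hence lies in the $R$-span of $\mathcal{C}_Q \subset \mathcal{N}$. This places $w$ in $\mathcal{N} \cap \mathcal{N}^{\perp}$, forcing $(w,w)_{\mathcal{M}} = 0$ and so $w = 0$ by positive definiteness, a contradiction. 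Combined with semisimplicity, this yields an orthogonal decomposition $\mathcal{M}_Q = \bigoplus_{\alpha} V_{\alpha}$ into irreducible submodules, each containing a nonzero highest weight vector $\phi_\alpha$ obtained as a minimal-dimension element of $V_\alpha$.

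Finally, Proposition \ref{prop:Vsigma}, together with its two extensions to half-integral weights and to the specialization $\nu_0$ noted after its statement, identifies each $V_{\alpha}$ with $V_\sigma(\lambda_{\phi_\alpha})_{\nu_0}$ and shows that $V_{\alpha}\cap \bigcap_i \ker E_i = R\phi_\alpha$. Summing over $\alpha$ gives $\bigcap_i \ker E_i = \bigoplus_\alpha R\phi_\alpha$ as $R$-modules, so any homogeneous orthogonal basis of the cuspidal subspace — in particular $\mathcal{C}_Q$ — is in bijection with the irreducible summands, yielding the claimed decomposition. The main obstacle I anticipate is the clean invocation of Proposition \ref{prop:Vsigma} on an abstractly specified irreducible summand: one must know that any irreducible $B_\sigma(\mathfrak{g}_Q)_{\nu_0}$-module equipped with a highest weight vector of weight $\lambda$ automatically satisfies the $1$-dimensionality characterization used in Proposition \ref{prop:Vsigma}, which is standard highest-weight-theoretic reasoning but ultimately rests on the uniqueness clause of that proposition.
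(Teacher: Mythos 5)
Your proof rests on the same three pillars as the paper's: the positive-definite Green form on $\mathcal{M}_Q$, the adjunction $(F_i\alpha,\beta)_{\mathcal{M}}=\tfrac{1}{\nu^{-2}-1}(\alpha,E_i\beta)_{\mathcal{M}}$, and the minimal-dimension argument producing cuspidal vectors. The difference is organizational: you first prove abstract semisimplicity (orthogonal complement of a submodule is a submodule), then show $\mathcal{C}_Q$ generates, then try to identify the irreducible pieces; the paper instead works directly with $\langle\xi\rangle$ for each $\xi\in\mathcal{C}_Q$, verifies on the spot that its cuspidal subspace is exactly $R\xi$, and only then invokes Proposition~\ref{prop:Vsigma}, so that orthogonality and exhaustion come afterwards. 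Your route buys a cleaner ``semisimple module generated by highest weight vectors'' picture, at the cost of having to show that an abstractly produced irreducible summand $V_\alpha$ has one-dimensional cuspidal subspace, which you correctly flag as the remaining gap.

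That flagged gap is real but small, and the fix is precisely the computation the paper uses for $\langle\xi\rangle$: if $x\in V_\alpha$ is cuspidal and $x=\sum_i F_i y_i$ with $y_i\in V_\alpha$, then $(x,x)_{\mathcal{M}}=\tfrac{1}{\nu^{-2}-1}\sum_i(E_i x,y_i)_{\mathcal{M}}=0$, forcing $x=0$ by positive definiteness; since $V_\alpha$ is irreducible it is generated by $\phi_\alpha$, hence (using the $q$-boson relations to push $E_i$'s past $F_i$'s, and $E_i\phi_\alpha=0$) $V_\alpha=R\phi_\alpha+\sum_i F_iV_\alpha$, so every cuspidal lies in $R\phi_\alpha$. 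It would be cleaner — and is what the paper does — to run this argument before invoking Proposition~\ref{prop:Vsigma} rather than after, since the proposition's uniqueness statement is conditioned on the one-dimensional cuspidal hypothesis; appealing to ``standard highest-weight reasoning'' that itself rests on the uniqueness clause risks circularity. With that one paragraph supplied, your argument is correct.
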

\begin{proof}
We first show that the submodule $\langle \xi \rangle \subset \mathcal{M}_Q$ generated by $\xi \in \mathcal{C}_Q$ is isomorphic to $V_{\sigma}(\lambda_{\xi})_{\nu_0}$. Indeed, suppose that $x \in \langle \xi \rangle$ is non-zero with $E_i x=0$ for all $i \in Q_0$. If $x= \sum_{i\in Q_0} F_i y_i$ for some $y_i \in \langle \xi \rangle$, then
\[
( x, x )_{\mathcal{M}} = \sum_{i \in Q_0} ( x, F_i y_i )_{\mathcal{M}} = \frac{1}{\nu^{-2}-1} \sum_{i \in Q_0} ( E_i x, y_i )_{\mathcal{M}} =0.
\]
However, writing $x$ in the natural basis of $\mathcal{M}_Q$ as $x = \sum_M c_M [M]$ shows
\[
(x, x)_{\mathcal{M}} = \sum_M \frac{c_M^2}{a_S(M)} >0,
\]
a contradiction. So, $x$ is a scalar multiple of $\xi$ and Proposition \ref{prop:Vsigma} implies $\langle \xi \rangle \simeq V_{\sigma}(\lambda_{\xi})_{\nu_0}$.

Suppose now that $\xi_1, \xi _2 \in \mathcal{C}_Q$ are distinct and therefore orthogonal. It follows that $\langle \xi_1 \rangle$ and $\langle \xi_2 \rangle$ are also orthogonal. Then we have an inclusion
\[
\bigoplus_{\xi \in \mathcal{C}_Q}V_{\sigma}(\lambda_{\xi})_{\nu_0} \hookrightarrow \mathcal{M}_Q.
\]
To prove that this is an isomorphism, note that the restriction of $( \cdot, \cdot)_{\mathcal{M}}$ to the submodule generated by $\mathcal{C}_Q$ is non-degenerate. Let $0 \neq x \in \mathcal{M}_Q$ be orthogonal to this submodule and of minimal dimension with this property. As $x$ is not cuspidal, $E_i x \neq 0$ for some $i \in Q_0$. By the minimality assumption, $E_i x \in \bigoplus_{\xi \in \mathcal{C}_Q} V_{\sigma}(\lambda_{\xi})_{\nu_0}$. Since $F_i E_i x \in \bigoplus_{\xi \in \mathcal{C}_Q} V_{\sigma}(\lambda_{\xi})_{\nu}$,
\[
(E_i x, E_i x)_{\mathcal{M}} = (\nu^{-2}-1)(x, F_i E_i x)_{\mathcal{M}} =0,
\]
contradicting $E_i x \neq 0$, completing the proof.
\end{proof}

As a special case of Theorem \ref{thm:hallModDecomp}, note that in the orthogonal case we have $\langle [0] \rangle \simeq V_{\sigma}(0)_{\nu_0}$. A geometric version of this isomorphism was obtained by Enomoto \cite{enomoto2009} by studying perverse sheaves on the moduli stack of orthogonal representations over $\mathbb{C}$. Moreover, a lower global basis of $V_{\sigma}(0)$ was obtained, giving an orthogonal analogue of Lusztig's construction of the lower global basis of $U_v^-(\mathfrak{g}_Q)$ \cite{lusztig1990}. In \cite{varagnolo2011} Enomoto's approach was generalized to construct lower global bases of $V_{\sigma}(\lambda)$ for general $\lambda$.

A result stronger than Theorem \ref{thm:redModuleStructure}, but valid only for symplectic, orthogonal and unitary representations of the Jordan quiver, was proved in \cite{leeuwen1991}. Note that in these cases $\mathcal{E}$ is identically zero. Following Zelevinsky \cite{zelevinsky1981} and interpreting $\mathcal{M}_Q$ in terms of unipotent characters of classical groups, van Leeuwen constructed a ring homomorphism $\Phi : \mathcal{H}_Q \rightarrow \mathcal{H}_Q \otimes_R \mathcal{H}_Q$, third order in the Hall numbers, satisfying $\rho ([U] \star [M]) = \Phi([U]) \star \rho([M])$. See also \cite{tadic1995} for a $p$-adic version of this result.  Theorem \ref{thm:redModuleStructure} recovers a particular component of this $\Phi$-twisted Hopf module structure.\footnote{While Theorem \ref{thm:redModuleStructure} is stated for loopless quivers, the verification of \eqref{eq:sdHallIden} above holds without this assumption.} It would be very interesting to extend this result to arbitrary $(Q,\sigma)$.

\section{Hall modules of finite type quivers}

\label{sec:ftHallMod}

\subsection{Classification of self-dual representations over finite fields}
A quiver $Q$ is called finite type if it has only finitely many isomorphism classes of indecomposable representations over any field. By Gabriel's theorem \cite{gabriel1972}, a connected finite type quiver is an orientation of an $ADE$ Dynkin diagram and its indecomposables are in bijection with a set of positive roots of $\mathfrak{g}_Q$.

\begin{Ex}
Let $Q$ be an orientation of $A_{2n}$ or $A_{2n+1}$. Label the nodes $-n,\dots, n$ (omitting $0$ for $A_{2n}$) with $i$ and $i+1$ adjacent. Let $I_{i,j}$ be the indecomposable with dimension vector $\epsilon_i  + \cdots + \epsilon_j$ and all intermediate structure maps the identity. Then $\{ I_{i,j} \}_{-n \leq i \leq j \leq n}$ is a complete set of representatives of indecomposables. 
\end{Ex}

Similarly, $(Q, \sigma)$ is called finite type if it has only finitely many isometry classes of indecomposable self-dual representations over any field whose characteristic is not two. By \cite[Theorem 3.1]{derksen2002}, $(Q, \sigma)$ is finite type if and only if $Q$ is finite type. In \textit{loc. cit.} the authors work with orthogonal and symplectic representations but their proof applies to the more general dualities considered here. It follows that if $(Q,\sigma)$ is finite type and not a disjoint union of quivers with involution, then $Q$ is of Dynkin type $A$ or $Q=Q^{\prime \sqcup}$ with $Q^{\prime}$ of Dynkin type $ADE$.

For the purpose of studying Hall modules of finite type quivers it suffices to restrict attention to orthogonal, symplectic and unitary representations. Indeed, any choice of duality is equivalent to one of these three choices.

\begin{Lem}
\label{lem:splitting}
\begin{enumerate}[leftmargin=0cm,itemindent=.6cm,labelwidth=\itemindent,labelsep=0cm,align=left]
\item The representation underlying a self-dual indecomposable is either indecomposable or of the form $I \oplus S(I)$ for some indecomposable $I$.
\item Let $Q$ be finite type. If an indecomposable $I$ does not admit a self-dual structure, then, up to isometry, $H(I)$ is the unique self-dual structure on $I \oplus S(I)$.
\end{enumerate}
\end{Lem}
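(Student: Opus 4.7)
The plan is to prove (1) first, then deduce (2) from (1) combined with the Krull--Schmidt theorem for self-dual objects. For part (1), I would take a self-dual indecomposable $N$ and work with the Krull--Schmidt decomposition $N = \bigoplus_k N_k$ of its underlying representation, analysing how the form $\psi_N$ pairs the summands. Fix an indecomposable summand $N_1$ with inclusion $i_1 : N_1 \hookrightarrow N$, and consider $\phi_1 := S(i_1) \psi_N i_1 : N_1 \to S(N_1)$. A direct computation using $S(\psi_N)\Theta_N = \psi_N$ and naturality of $\Theta$ shows $S(\phi_1)\Theta_{N_1} = \phi_1$. If $\phi_1$ is an isomorphism, then $(N_1, \phi_1)$ is a self-dual structure on $N_1$ and the form restricts non-degenerately to $N_1$; the standard orthogonal decomposition $N = N_1 \perp N_1^{\perp}$ along a non-degenerate admissible subobject then applies, and indecomposability of $N$ forces $N_1^{\perp} = 0$, so $N$ is itself indecomposable as a representation. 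If instead $\phi_1$ is not an isomorphism, then since $\psi_N : \bigoplus_k N_k \xrightarrow{\sim} \bigoplus_k S(N_k)$ is globally an isomorphism, Krull--Schmidt yields a distinct summand $N_2 \cong S(N_1)$ such that the $(N_2, S(N_1))$-block of $\psi_N$ is an isomorphism. A Gram--Schmidt style modification on $N_1 \oplus N_2 \subset N$ then produces a non-degenerate self-dual subobject isometric to $H(N_1)$; by indecomposability, $N = N_1 \oplus S(N_1)$.

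For part (2), I would invoke Krull--Schmidt for self-dual objects in a category with duality (standard in the framework of \cite{quebbemann1979}) to decompose $N$ as an orthogonal direct sum of self-dual indecomposables. By part (1), each summand is either an indecomposable representation that admits a self-dual structure, or a hyperbolic object $H(J)$ for some indecomposable $J$ that does not. A short verification shows $I$ admits a self-dual structure if and only if $S(I)$ does: given $\psi_I : I \xrightarrow{\sim} S(I)$ with $S(\psi_I)\Theta_I = \psi_I$, the map $\Theta_I \psi_I^{-1} : S(I) \to S^2(I)$ is a self-dual structure on $S(I)$, as one checks using $S(\Theta_I)\Theta_{S(I)} = 1_{S(I)}$. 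Since by hypothesis $I$ admits no self-dual structure, neither does $S(I)$, and therefore no summand of $N$ can be of the first type. Hence $N$ is an orthogonal sum of hyperbolic pieces $H(J_k)$ whose underlying representation is $\bigoplus_k (J_k \oplus S(J_k))$; matching this with $I \oplus S(I)$ via Krull--Schmidt forces a single summand with $J_k \cong I$ (or $J_k \cong S(I)$, in which case $H(S(I)) \simeq_S H(I)$), giving $N \simeq_S H(I)$.

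The main obstacle will be the non-isomorphism case in (1): extracting a non-degenerate hyperbolic subobject from $N_1 \oplus N_2 \subset N$ that is orthogonally complemented. Concretely, one would write the restricted form as a block matrix with respect to $N_1 \oplus N_2$, show that both diagonal blocks $\phi_1 : N_1 \to S(N_1)$ and $\phi_2 : N_2 \to S(N_2)$ lie in the Jacobson radical of the respective local endomorphism rings, and use the hypothesis that $2$ is invertible to eliminate them via an automorphism of $N_1 \oplus N_2$ supported on these diagonal blocks. Once this Witt-style decomposition is in place, indecomposability of $N$ concludes (1), and then (2) follows formally.
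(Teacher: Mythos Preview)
Your plan for (1) is the standard argument and is fine; the paper simply cites \cite[Proposition 2.7]{derksen2002} for this part. Note, however, that the conclusion ``isometric to $H(N_1)$'' you draw in the non-isomorphism branch is strictly stronger than what (1) asserts (which concerns only the underlying representation), and this stronger claim is exactly what carries the weight in your approach to (2).

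For (2) the paper takes a more direct route. It writes an arbitrary self-dual structure on $I\oplus S(I)$ as a block matrix $\left(\begin{smallmatrix}a&b\\c&d\end{smallmatrix}\right)$ and uses two finite-type facts. If $I\simeq S(I)$ then $\Hom(I,S(I))\simeq k$, so $a$ (which satisfies $S(a)\Theta_I=a$) is either zero or a self-dual structure on $I$; the hypothesis forces $a=0$, and likewise $d=0$, whence $\psi$ is visibly hyperbolic. If $I\not\simeq S(I)$ the paper invokes the total order $\prec$ on indecomposables available in finite type (with $\Hom(I,J)=0$ whenever $J\prec I$): choosing $S(I)\prec I$ gives $a=0$ for free, $b$ and $c$ are normalised by $Aut(I)$, and then the single explicit transvection $\left(\begin{smallmatrix}1&-\frac{1}{2}d\\0&1\end{smallmatrix}\right)$ kills $d$.

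Your detour through self-dual Krull--Schmidt is correct but buys nothing here: since the underlying representation has exactly two indecomposable summands, any nontrivial orthogonal splitting of $N$ would put a self-dual structure on $I$ or on $S(I)$, which is excluded; so $N$ is already self-dual indecomposable and you are back to the same matrix problem. The genuine gap is in your Gram--Schmidt step. The maps $\phi_i:N_i\to S(N_i)$ are between possibly non-isomorphic objects, so ``lie in the Jacobson radical of the respective local endomorphism rings'' is not literally meaningful when $N_i\not\simeq S(N_i)$, and a block-diagonal automorphism (``supported on the diagonal blocks'') cannot zero out diagonal entries of the form. What makes the elimination go through cleanly is precisely the finite-type ingredient you omit: either $\End(I)=k$ (case $I\simeq S(I)$) or the directedness $\Hom(I,S(I))=0$ from the total order (case $I\not\simeq S(I)$) forces one diagonal block to vanish \emph{a priori}, after which one off-diagonal transvection using $\tfrac{1}{2}$ finishes.
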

\begin{proof}
The first statement is given in \cite[Proposition 2.7]{derksen2002} for algebraically closed fields but the proof works without this assumption.

Since $Q$ is finite type, there is a total order $\preceq$ on the set of indecomposables such that $\hbox{Hom}(I, J) = \hbox{Ext}^1(J,I)=0$ if $J \prec I$; see \cite{crawley1992}. Writing a self-dual structure $\psi$ on $I \oplus S(I)$ as
\[
I \oplus S(I) \xrightarrow[]{\left( \begin{smallmatrix}  a & b \\ c & d \end{smallmatrix} \right) } S(I) \oplus S^2(I)
\]
we see that  $S(a) \Theta_I=a$.  If $I \simeq S(I)$, then $\Hom(I,S(I)) \simeq k$ and $a=0$; otherwise $a$ is a self-dual structure on $I$. Similarly $d=0$, and it is now straightforward to verify that $\psi$ is isometric to $H(I)$. If instead $I \not\simeq S(I)$, we may assume $S(I) \prec I$. Again $a=0$ and acting by $Aut(I)$ we may take $b=1_{S(I)}$ and $c=\Theta_I$. Then $\left( \begin{smallmatrix} 1 & -\frac{1}{2}d \\ 0 & 1 \end{smallmatrix} \right)$ is an isometry from $\psi$ to $H(I)$.
\end{proof}

We can use Lemma \ref{lem:splitting} to describe the self-dual indecomposables of finite type quivers over finite fields. For $Q^{\sqcup}$ the self-dual indecomposables are in bijection with the indecomposables of $Q$. Indecomposables in type $A_{2n+1}$ (resp. $A_{2n}$) do not admit symplectic (resp. orthogonal) structures. In these cases the hyperbolics $\{ H(I_{i,j}) \}$ are therefore a complete set of representatives of self-dual indecomposables. For orthogonal (resp. symplectic) representations in type $A_{2n+1}$ (resp. $A_{2n}$), the indecomposables $I_{-i,i}$ admit two self-dual structures, denoted by $R_i^c$ according to the following rule. By composing the (inverse) structure maps of $R_i^c$ we get a map from the $(-i)$th vector space to the $i$th vector space. Using the self-dual structure, this gives an orthogonal form on the $(-i)$th vector space, whose Witt class we denote by $c \in W(\mathbb{F}_q)$. In these cases we therefore replace $H(I_{-i,i})$ in the above set with the two $R_i^c$. Finally, $I_{-i,i}$ admits a unique unitary structure $R_i$, which replaces $H(I_{-i,i})$.

We introduce some notation for the Witt ring $W(\mathbb{F}_q)$. The subset $W_1$ spanned by classes of one dimensional forms is stable under tensor product and, as a multiplicative group, we identify it with $\{1,-1\}$. Also, denote by $\eta: \mathbb{F}_q^{\times} \rightarrow \{1,-1\} \simeq W_1$ the quadratic character.

\begin{Ex}
There are six indecomposable orthogonal representations of $\begin{tikzpicture}[thick,scale=.33,decoration={markings,mark=at position 0.6 with {\arrow{>}}}]
\draw[postaction={decorate}] (0,0) to  (2,0);
\draw[postaction={decorate}] (2,0) to  (4,0);
\fill (0,0) circle (4pt);
\fill (2,0) circle (4pt);
\fill (4,0) circle (4pt);
\end{tikzpicture}$:
\[
H(S_1): \mathbb{F}_q \rightarrow 0 \rightarrow \mathbb{F}_q, \;\;\;\;\;\;\;\; H(I_{0,1}): \mathbb{F}_q \xrightarrow[]{\left( \begin{smallmatrix} 1 \\ 0 \end{smallmatrix} \right)} \mathbb{F}_q^2 \xrightarrow[]{\left( \begin{smallmatrix} 0 & -1 \end{smallmatrix} \right) } \mathbb{F}_q,
\]
\[
R_0^{c}: 0 \rightarrow \mathbb{F}_q \rightarrow 0, \;\;\;\;\;\;\; \;\;\;\;\;\;\;\;\; R_1^{\eta(-1)c}: \mathbb{F}_q \xrightarrow[]{1} \mathbb{F}_q \xrightarrow[]{-\tilde{c}} \mathbb{F}_q .
\]
The orthogonal form on the central node is hyperbolic for $H(I_{0,1})$ and has Witt index $c$ for $R_0^c$ and $R_1^{\eta(-1)c}$. The element $\tilde{c} \in \mathbb{F}_q$ satisfies $\eta(\tilde{c}) =c$.
\end{Ex}

Over algebraically closed fields self-dual indecomposables of finite type quivers admit a partial interpretation in terms of root systems \cite{derksen2002}. The above classification extends this interpretation to finite fields. Denote by $\mathcal{I}_Q^{\mathfrak{g}}$ the multiset of dimension vectors of self-dual $\mathbb{F}_q$-indecomposables, with $\mathfrak{g}$ one of $\mathfrak{so}$, $\mathfrak{sp}$ or $\mathfrak{u}$.

\begin{Thm}
\label{thm:sdGabriel}
Let $(Q, \sigma)$ be finite type. Then $\mathcal{I}_Q^{\mathfrak{g}}$ is independent of the orientation of $Q$ and the finite field $\mathbb{F}_q$. Explicitly,
\begin{enumerate}
[leftmargin=0cm,itemindent=.6cm,labelwidth=\itemindent,labelsep=0cm,align=left]
\item if $Q$ is of type $A_{2n}$, then $\mathcal{I}_Q^{\mathfrak{so}}$ is in bijection with $BC_n^+$ while $\mathcal{I}_Q^{\mathfrak{sp}}$ (resp. $\mathcal{I}_Q^{\mathfrak{u}}$) surjects onto $B_n^+$, the short roots having fibre of cardinality three (resp. two),

\item if $Q$ is of type $A_{2n+1}$, then $\mathcal{I}_Q^{\mathfrak{sp}}$ (resp. $\mathcal{I}_Q^{\mathfrak{u}}$) is in bijection with $C_{n+1}^+$ (resp. $B_{n+1}^+$), while $\mathcal{I}_Q^{\mathfrak{so}}$ surjects onto $B_{n+1}^+$, the short roots having fibre of cardinality two, and

\item $\mathcal{I}_{Q^{\sqcup}}^{\mathfrak{g}}$ is in bijection with the set of positive roots of $\mathfrak{g}_Q$.
\end{enumerate}
\end{Thm}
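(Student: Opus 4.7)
The plan is to combine Gabriel's theorem \cite{gabriel1972} with Lemma \ref{lem:splitting} and the explicit tabulation of self-dual structures on the $\sigma$-fixed indecomposables $I_{-i,i}$ carried out in the paragraphs immediately preceding the theorem. Gabriel parameterizes the indecomposables of $Q$ by positive roots of $\mathfrak{g}_Q$ in an orientation-independent way, and $\sigma$ acts on this set by $I_{i,j} \mapsto I_{-j,-i}$ in type $A$ and by interchange of components on $Q^{\sqcup}$; in particular the orbit decomposition is insensitive to the orientation of $Q$. Orientation-independence of $\mathcal{I}_Q^{\mathfrak{g}}$ follows. Field-independence reduces to the observation that the only datum potentially depending on $\mathbb{F}_q$ is the Witt label $c \in W_1 \simeq \{\pm 1\}$ of $R_i^c$, and $W_1 \subset W(\mathbb{F}_q)$ is the same subgroup for every odd $q$.

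Part (3) is then immediate: on $Q^{\sqcup}$ no indecomposable is $\sigma$-fixed, so by Lemma \ref{lem:splitting} the self-dual indecomposables are exactly the hyperbolics $H(I)$ indexed by $\sigma$-orbits $\{I,S(I)\}$, which are in canonical bijection with the indecomposables of $Q$, and hence with the positive roots of $\mathfrak{g}_Q$.

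For (1) and (2) the plan is to enumerate self-dual indecomposables orbit by orbit. Each non-$\sigma$-fixed orbit $\{I_{i,j}, I_{-j,-i}\}$ contributes a single hyperbolic $H(I_{i,j})$. The remaining contributions come from the $\sigma$-fixed $I_{-i,i}$ and are read off from the tabulation: a single hyperbolic $H(I_{-i,i})$ in the orthogonal $A_{2n}$ and symplectic $A_{2n+1}$ cases; two non-hyperbolic forms $R_i^c$ indexed by $c \in W_1$ in the orthogonal $A_{2n+1}$ and symplectic $A_{2n}$ cases; and a unique unitary form $R_i$ in the unitary case. I would then set up the comparison with the root system by sending $H(I_{i,j})$ to the long root $\epsilon_{\vert i \vert} \pm \epsilon_{\vert j \vert}$ determined by the signs of $i,j$, and sending the contribution(s) of each $I_{-i,i}$ to the short root $\epsilon_i$ in types $B$ and $C$, together with the long root $2\epsilon_i$ in type $BC_n$.

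The main obstacle is this last bookkeeping step: one must verify that the dimension vectors of the self-dual indecomposables, expressed in the natural basis $\{\epsilon_i + \epsilon_{\sigma(i)}\}$ of the $\sigma$-invariant sublattice of $\Phi$, really fill out $BC_n^+$, $B_{n+1}^+$ or $C_{n+1}^+$, and that the fibre cardinalities $1$, $2$, or $3$ at short roots match the number of self-dual structures on the corresponding $I_{-i,i}$. Concretely, I expect this to reduce to matching the totals $n^2+n$, $n^2+2n$, $(n+1)^2$ and $(n+1)(n+2)$ (one per case) against the cardinalities of the respective positive root systems, and checking the natural fibre structure at short roots. This is pure combinatorics and requires no further input from the categorical setup.
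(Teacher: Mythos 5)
Your high-level strategy is the same as the paper's: enumerate the self-dual indecomposables via the $\sigma$-orbit decomposition together with Lemma~\ref{lem:splitting} and the explicit tabulation of self-dual structures on the $\sigma$-fixed $I_{-i,i}$, and then construct the bijection to the relevant positive root system. Parts of your write-up are entirely in order — the treatment of part~(3), the orientation- and field-independence argument, and the totals $n^2+n$, $n^2+2n$, $(n+1)^2$, $(n+1)(n+2)$ are all correct.

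However, the explicit map and the fibre-counting plan you sketch contain a genuine error in type $A_{2n}$, and it is not the kind of slip that a routine count of totals would catch. You assert that ``the fibre cardinalities $1$, $2$, or $3$ at short roots match the number of self-dual structures on the corresponding $I_{-i,i}$,'' and you send every hyperbolic $H(I_{i,j})$ to a long root. This is wrong: because $A_{2n}$ has no central vertex, the hyperbolic $H(I_{1,j})$ has underlying representation $I_{1,j}\oplus I_{-j,-1}$, whose dimension vector coincides with that of the $\sigma$-fixed indecomposable $I_{-j,j}$. Since $\mathcal{I}_Q^{\mathfrak{g}}$ is by definition a multiset of dimension vectors, $H(I_{1,j})$ must land on the same (short) root as the $R_j^c$, not on a long root $\epsilon_1\pm\epsilon_j$. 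This hyperbolic is precisely what raises the fibre cardinality from $2$ to $3$ in the symplectic $A_{2n}$ case and from $1$ to $2$ in the unitary $A_{2n}$ case, and in the orthogonal $A_{2n}$ case it is $H(I_{1,j})\mapsto\varepsilon_j$ together with $H(I_{-j,j})\mapsto 2\varepsilon_j$ that produces the roots $\varepsilon_j$ and $2\varepsilon_j$ of $BC_n^+$ — your sketch sends the single object $H(I_{-j,j})$ to both, which is incoherent. Relatedly, in symplectic $A_{2n+1}$ the $\sigma$-fixed contribution $H(I_{-j,j})$ must go to the \emph{long} root $2\varepsilon_j$ of $C_{n+1}^+$, not to a short root $\varepsilon_j$. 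The paper's proof handles all of this by writing down a single explicit index-shifted formula $H(I_{i,j})\mapsto \varepsilon_{n-j}\mp\varepsilon_{n\mp i+\ldots}$ covering all orbits at once; your description of the bookkeeping as ``match the totals and check fibres at short roots equal the number of self-dual structures on $I_{-i,i}$'' misses where the extra fibre elements actually come from.
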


\begin{proof}
Suppose that $Q$ is of type $A_{2n}$. Our notation for roots systems is
\[
B_n^+ = \{ \varepsilon_i  \pm \varepsilon_j , \varepsilon_i \; \vert \;  0 \leq i \leq j \leq n-1 \}
\]
and $BC_n^+ = B_n^+ \bigsqcup \{ 2 \varepsilon_i \}_{i =0}^{n-1}$.  For orthogonal representations, the bijection is
\[
H(I_{i,j}) \mapsto \left\{ \begin{array}{ll}  \varepsilon_{n-j} - \varepsilon_{n-i+1},& \hbox{for } 1 \leq i \leq j \leq n,  \\ 
 \varepsilon_{n-j} + \varepsilon_{n+i} ,  & \hbox{for } - n \leq i < -1  \hbox{ and }  1 \leq j \leq n. \end{array} \right.
\]
For symplectic (resp. unitary) representations the bijection is as above, but now $R_i^c$ (resp. $R_i$) maps to $\varepsilon_{n-i}$. The other cases are similar.
\end{proof}

\subsection{Application to Hall modules}

A weak version of the Krull-Schmidt theorem holds for self-dual representations: a self-dual representation decomposes into an orthogonal direct sum of self-dual indecomposables. However, while the type $c \in W(\mathbb{F}_q)$ of a summand\footnote{The indexing convention generalizes that for $r=1$: the underlying representation of $R_i^{\oplus r, c}$ is $I_{-i,i}^{\oplus r}$ and induced orthogonal form on the $(-i)$th vector space has type $c$.} $R_i^{\oplus r, c}$ is well-defined, the type of its indecomposable summands may not be.

\begin{Prop}
\label{prop:reducDecomp}
Let $(Q, \sigma)$ and $(Q^{\prime}, \sigma)$ be finite type quivers with involution with the same underlying graph and duality. Then the decompositions of $\mathcal{M}_Q$ and $\mathcal{M}_{Q^{\prime}}$ into irreducible $B_{\sigma}(\mathfrak{g}_Q)_{\nu_0}$-modules coincide.
\end{Prop}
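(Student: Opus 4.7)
The plan is to use Theorem~\ref{thm:hallModDecomp} to reduce the assertion to the orientation-independence of the graded dimensions of $\mathcal{M}_Q$, which will follow from the classification of self-dual representations given in the previous subsection.

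The first step is to check that the highest weight $\lambda_\xi$ attached to a cuspidal $\xi$ depends on $\xi$ only through $\mathbf{dim}\,\xi$, and via a formula that makes no reference to the orientation of $Q$. In the defining expression for $\lambda_\xi$, the Cartan form coincides with the symmetrized Euler form, which is the Cartan matrix of the underlying graph of $Q$. For the term $\mathcal{E}(\epsilon_i)+\mathcal{E}(\epsilon_{\sigma(i)})$ I would examine Proposition~\ref{prop:explicitEulerMod}: since $Q$ has no loops, the only arrows whose contribution survives the evaluation on a simple are the $\sigma$-fixed ones between $i$ and $\sigma(i)$, and each edge of the underlying graph joining these vertices carries a unique such arrow whose contribution, determined by $(s,\tau)$, is the same regardless of its orientation. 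Hence the assignment $\mathbf{dim}\,\xi\mapsto\lambda_\xi$ is identical for $Q$ and $Q'$.

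The second step is an inversion argument. Restricting the decomposition of Theorem~\ref{thm:hallModDecomp} to each dimension-vector graded piece of $\mathcal{M}_Q$ gives
\[
\dim_R \mathcal{M}_Q(d) \;=\; \sum_{\xi\in\mathcal{C}_Q} \dim_R V_\sigma(\lambda_\xi)_{\nu_0}(d),
\]
where $V_\sigma(\lambda_\xi)_{\nu_0}(d)$ denotes the dimension $d$ component of the cyclic submodule generated by $\xi$. By Proposition~\ref{prop:Vsigma} the character of $V_\sigma(\lambda)_{\nu_0}$ is determined by $\lambda$ alone, and characters of distinct irreducibles are linearly independent. Consequently the multiset $\{\mathbf{dim}\,\xi\}_{\xi\in\mathcal{C}_Q}$, and therefore also the multiset $\{\lambda_\xi\}_{\xi\in\mathcal{C}_Q}$, is determined by the sequence $d\mapsto\dim_R\mathcal{M}_Q(d)$.

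The third step uses the classification. Since $\dim_R\mathcal{M}_Q(d)$ counts isometry classes of self-dual representations of dimension vector $d$, the weak Krull-Schmidt theorem stated at the beginning of this subsection reduces this count to a choice of multiplicities of the self-dual indecomposables together with a Witt class $c\in W(\mathbb{F}_q)$ for each $R_i^{\oplus r, c}$ summand; by Lemma~\ref{lem:splitting}(ii) and the accompanying discussion of $R_i^c$, this data determines the isometry class. Theorem~\ref{thm:sdGabriel} then asserts that the multiset of dimension vectors of self-dual indecomposables depends only on the underlying graph, the duality, and the field $\mathbb{F}_q$, so the count is the same for $Q$ and $Q'$. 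I expect the main obstacle to lie in this last step, namely the bookkeeping of Witt classes of repeated $R_i$ summands; however this reduces to a purely $W(\mathbb{F}_q)$-combinatorial calculation that is manifestly insensitive to the orientation of $Q$.
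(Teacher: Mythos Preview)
Your argument is essentially the paper's own proof, organized around the same three ingredients: orientation-independence of the weight assignment, linear independence of the characters of the irreducibles $V_\sigma(\lambda)$, and the orientation-independence of the graded rank of $\mathcal{M}_Q$ via Theorem~\ref{thm:sdGabriel} together with weak Krull--Schmidt. The only place where you are less explicit than the paper is the clause ``characters of distinct irreducibles are linearly independent'': Proposition~\ref{prop:Vsigma} by itself does not give this, and the paper supplies the missing step by observing that, because $Q$ is finite type, the symmetrized Euler form is non-degenerate, whence the highest-weight space of $V_\sigma(\lambda)$ has rank one and the usual triangularity argument applies. In your dimension-vector grading this is even more immediate (the component of $\langle\xi\rangle$ at $\mathbf{dim}\,\xi$ is $R\cdot\xi$, and all other components sit at strictly larger dimension vectors), but you should say so rather than invoke Proposition~\ref{prop:Vsigma}.
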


\begin{proof}
Let $\mbox{ch}(\mathcal{M}_Q)$ be the generating function of the ranks of the $T_i$-weight spaces of $\mathcal{M}_Q$. Note that the $T_i$-weight of a self-dual representation depends only on its dimension vector and not on the orientation of $Q$. Theorem \ref{thm:sdGabriel} and the weak Krull-Schmidt theorem therefore imply $\mbox{ch}(\mathcal{M}_Q) =\mbox{ch}(\mathcal{M}_{Q^{\prime}})$. Since $Q$ is finite type, the symmetrized Euler form is non-degenerate. It follows that the weight $\lambda$ subspace of $V_{\sigma}(\lambda)$ is rank one. From this we conclude that the characters $\{\mbox{ch}(V_{\sigma}(\lambda))\}_{\lambda \in \mbox{\footnotesize Hom}(\Phi, \frac{1}{2} \mathbb{Z})}$ are linearly independent. The proposition now follows.
\end{proof}

We first deal with those $(Q, \sigma)$ admitting only hyperbolics.

\begin{Thm}
\label{thm:noKFormIrred}
If a duality structure on a finite type quiver $(Q, \sigma)$ admits only hyperbolic self-dual representations, then $\mathcal{M}_Q = \langle [0] \rangle  \simeq V_{\sigma}(\lambda_{[0]})_{\nu_0}$.
\end{Thm}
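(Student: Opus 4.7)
The plan is to show that $\mathcal{M}_Q$ is cyclically generated by $[0]$ under the $B_{\sigma}(\mathfrak{g}_Q)_{\nu_0}$-action, which combined with Theorem~\ref{thm:hallModDecomp} identifies it with $V_{\sigma}(\lambda_{[0]})_{\nu_0}$.

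First I would observe that $[0]$ is cuspidal, since the zero representation admits no nonzero isotropic subobjects, so $E_i[0] = 0$ for every $i \in Q_0$. The argument in the proof of Theorem~\ref{thm:hallModDecomp} applied to $\xi = [0]$ then identifies the cyclic $B_{\sigma}(\mathfrak{g}_Q)_{\nu_0}$-submodule $\langle [0]\rangle$ with $V_{\sigma}(\lambda_{[0]})_{\nu_0}$. For $Q$ of finite type, Ringel's theorem asserts the stronger statement (underlying Theorem~\ref{thm:quantumHall}) that $\mathcal{H}_Q$ is generated as an $R$-algebra by $\{[S_i]\}_{i \in Q_0}$; since $E_i$ and $T_i$ act trivially or by scalars on $[0]$, this gives $\langle [0]\rangle = \mathcal{H}_Q \star [0]$.

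The crux is to prove $\mathcal{M}_Q = \mathcal{H}_Q \star [0]$. Under the hypothesis each self-dual $N$ is isometric to some $H(V)$, so it suffices to show $[H(V)] \in \mathcal{H}_Q \star [0]$ for every $V \in Rep_{\mathbb{F}_q}(Q)$. I would approach this via the identity
\[
[V] \star [0] \;=\; \nu^{-\mathcal{E}(V)} \sum_N G^N_{V,0}\, [N],
\]
noting that $G^N_{V,0}$ counts Lagrangians of $N$ isomorphic to $V$. Its non-vanishing forces $N$ to be metabolic with Lagrangian isomorphic to $V$, and the hypothesis promotes such $N$ to hyperbolic. The key subclaim to establish is that a hyperbolic object with a Lagrangian isomorphic to $V$ is already isometric to $H(V)$ itself, so that only the term $N = H(V)$ survives in the sum above. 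Granting this and $G^{H(V)}_{V,0} \geq 1$ (witnessed by the canonical Lagrangian $V \subset H(V)$), one obtains $[H(V)] \in \mathcal{H}_Q \star [0]$ for every $V$.

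The main obstacle is the auxiliary subclaim on hyperbolic objects with prescribed Lagrangians. It would be established via the self-dual extension analysis of Section~2.3: Lemmas~\ref{lem:reductionStruct}--\ref{lem:fibre} parameterize isometry classes of self-dual extensions of $0$ by $V$ as an $\Ext^1(S(V),V)^S$-torsor modulo $\im \delta^S$, and under the hypothesis this set consists solely of the class of $H(V)$. In the disjoint-union case $Q = Q^{\prime \sqcup}$, the subclaim can be bypassed entirely by invoking Proposition~\ref{prop:disjUnion}, under which $\mathcal{M}_Q \simeq \mathcal{H}_{Q'}$ as an $\mathcal{H}_{Q'} \otimes_R \mathcal{H}_{Q'}^{op-cop}$-bimodule sending $[0]$ to $1$; cyclic generation of $\mathcal{H}_{Q'}$ by $\{[S_i]\}$ then yields the theorem at once. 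For the remaining finite type cases compatible with the hypothesis (symplectic $A_{2n+1}$ and orthogonal $A_{2n}$, by Theorem~\ref{thm:sdGabriel}), Proposition~\ref{prop:reducDecomp} additionally permits fixing a convenient orientation in which the Lagrangian analysis is elementary.
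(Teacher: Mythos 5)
Your overall strategy coincides with the paper's: both aim to show $\mathcal{M}_Q = \mathcal{H}_Q \star [0]$ and then identify $\langle[0]\rangle$ with $V_{\sigma}(\lambda_{[0]})_{\nu_0}$ via the cuspidality of $[0]$ and Theorem~\ref{thm:hallModDecomp}. The reduction to showing that each hyperbolic class $[H(V)]$ lies in $\mathcal{H}_Q \star [0]$ is also the paper's route. However, there is a genuine gap at exactly the place you label ``the main obstacle.'' You assert that, because every self-dual representation is hyperbolic, the set $\leftexp{S}{\Ext}^1(0,V) \cong \Ext^1(S(V),V)^S / \im\delta^S$ ``consists solely of the class of $H(V)$,'' so that only $N = H(V)$ contributes to $[V]\star[0]$. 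This does not follow from the hypothesis alone: the hypothesis only forces every $N$ with $G^N_{V,0} \neq 0$ to be hyperbolic, not to be isometric to $H(V)$, and a priori $\Ext^1(S(V),V)^S / \im\delta^S$ could be nontrivial with several orbits mapping to distinct hyperbolics.

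The paper closes this gap with a concrete argument you do not reproduce. One writes $H(U) \simeq_S \bigoplus_i H(I_i)^{\oplus m_i}$ and, since $H(I) \simeq_S H(S(I))$, normalizes each indecomposable summand so that $S(I_i) \preceq I_i$ for the total order $\preceq$ on indecomposables with $\Hom(I,J)=\Ext^1(J,I)=0$ whenever $J \prec I$. Together with $\Ext^1(I,I)=0$ in finite type, this yields $\Ext^1(S(I_i),I_j)=0$ for $i \leq j$, and by duality for all $i,j$, hence $\Ext^1(S(U),U)=0$. Lemma~\ref{lem:fibre} then says the unique fibre of $\tilde{T}$ over $0 \in \Ext^1(0,U)$ is a singleton, so $[U]\star[0] = \nu^{-\mathcal{E}(U)} G^{H(U)}_{U,0}[H(U)]$ and the generation argument goes through. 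Your $Q^{\prime\sqcup}$ detour via Proposition~\ref{prop:disjUnion} is a valid shortcut for that case, but the appeal to Proposition~\ref{prop:reducDecomp} and a ``convenient orientation'' for the remaining orthogonal $A_{2n}$ and symplectic $A_{2n+1}$ cases is too vague to substitute for the $\Ext^1$-vanishing computation; Proposition~\ref{prop:reducDecomp} only transfers an already-known decomposition across orientations, it does not supply the base case. Supplying the total-order argument (or an equivalent proof that the relevant self-dual extension groups vanish) is what is missing.
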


\begin{proof}
By assumption, an arbitrary self-dual representation is of the form
\[
H(U) \simeq_S \bigoplus_{i=1}^l H(I_i)^{\oplus m_i}, \;\;\; m_i \geq 0
\] 
for indecomposables $I_i$ satisfying $I_i \not \simeq I_j$ and $I_i \not\simeq S(I_j)$ for $i \neq j$. Without loss of generality we may assume $S(I_i) \preceq I_i \prec I_{i+1} \prec \cdots \prec I_l$ for $i=1, \dots, l$. This implies $\Ext^1(S(I_i),I_j)=0$ for all $i \leq j$, and by duality, also for $ i \geq j$. Hence $\Ext^1(S(U),U)=0$ and
\[
[U] \star [0] = \nu^{-\mathcal{E}(U)} G^{H(U)}_{U,0} [H(U)].
\]
The equality $\mathcal{M}_Q = \langle [0] \rangle$ now follows from the fact $[S_i]$, $i\in Q_0$, generate $\mathcal{H}_Q$.
\end{proof}

Hall modules of unitary, symplectic and orthogonal representations in type $A_n$, $A_{2n}$ and $A_{2n+1}$, respectively, are not covered by Theorem \ref{thm:noKFormIrred}. To deal with these cases we will use the following simple fact, dual to the statement that the Hall algebra of a finite type quiver is generated by simple representations.

\begin{Lem}
\label{lem:cuspPrim}
Let $(Q,\sigma)$ be a finite type quiver. Then $\xi \in \mathcal{M}_Q$ is cuspidal if and only if $\rho(\xi) = [0] \otimes \xi$.
\end{Lem}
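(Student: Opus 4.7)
The forward direction is immediate from the definitions: if $\rho(\xi) = [0] \otimes \xi$, then the projection of $\rho(\xi)$ onto $[S_i] \otimes \mathcal{M}_Q$ vanishes for every $i$, so $E_i \xi = 0$. The content of the lemma is the converse, and the plan is to dualise both conditions with respect to Green's form and reduce to the fact that $\mathcal{H}_Q$ is generated by simples in finite type.

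For each $U \in Iso(Rep_{\mathbb{F}_q}(Q))$, introduce the operator $E_U \colon \mathcal{M}_Q \to \mathcal{M}_Q$ projecting $\rho$ onto its $[U]$-coefficient, so $\rho(\zeta) = \sum_U [U] \otimes E_U \zeta$. Comparing twist factors shows $E_{S_i}$ coincides exactly with the operator $E_i$ of Section \ref{sec:hallModQuantGrp}. Specialising the adjointness identity of Lemma \ref{lem:sdGreenForm} to $x = [U]$ gives
\[
([U] \star \xi, \zeta)_{\mathcal{M}} = \frac{1}{a(U)}(\xi, E_U \zeta)_{\mathcal{M}}, \qquad \xi, \zeta \in \mathcal{M}_Q,
\]
so by non-degeneracy of $(\cdot, \cdot)_{\mathcal{M}}$ a vector $\zeta$ is orthogonal to $V \star \mathcal{M}_Q$ for a subspace $V \subset \mathcal{H}_Q$ precisely when $E_U \zeta = 0$ for every $[U] \in V$. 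Taking $V = \sum_i R[S_i]$, cuspidality of $\xi$ translates into $\xi \perp \sum_i [S_i] \star \mathcal{M}_Q$. Taking $V = \mathcal{H}_Q^+ := \bigoplus_{U \neq 0} R[U]$ translates $\rho(\xi) \in [0] \otimes \mathcal{M}_Q$ into $\xi \perp \mathcal{H}_Q^+ \star \mathcal{M}_Q$; the counit identity $(\epsilon \otimes 1) \rho = \mathrm{id}_{\mathcal{M}_Q}$, which is immediate since $G^N_{0,M} = \delta_{N,M}$ and the twist $\tilde{c}(M,0)$ vanishes, then upgrades this to $\rho(\xi) = [0] \otimes \xi$. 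The lemma therefore reduces to the identity
\[
\mathcal{H}_Q^+ \star \mathcal{M}_Q = \sum_{i \in Q_0} [S_i] \star \mathcal{M}_Q.
\]

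The inclusion $\supset$ is trivial, and the reverse inclusion is where finite type enters. By Ringel's classical theorem---consistent with Theorem \ref{thm:quantumHall}, since for finite type $Q$ the graded ranks of $\mathcal{H}_Q$ and $U^-_\nu(\mathfrak{g}_Q)$ are both given by the number of positive roots of $\mathfrak{g}_Q$ in each weight---the algebra $\mathcal{H}_Q$ is generated as an $R$-algebra by $\{[S_i]\}_{i \in Q_0}$. Hence every element of $\mathcal{H}_Q^+$ is an $R$-linear combination of products $[S_{i_1}] \cdots [S_{i_k}]$ with $k \geq 1$, and associativity of $\star$ puts any such product acting on $\mathcal{M}_Q$ inside $[S_{i_1}] \star \mathcal{M}_Q$. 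This generation statement is the only substantive input and the only genuine obstacle in the argument; outside finite type it fails, reflecting that $\mathcal{H}_Q$ is a quantum Borcherds algebra carrying additional generators attached to imaginary roots, so the naive characterisation of cuspidals via $\rho$ cannot hold in that generality without enlarging the list of annihilation conditions.
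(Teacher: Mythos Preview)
Your argument is correct and is precisely the proof the paper has in mind: the lemma is stated without detailed proof, accompanied only by the remark that it is ``dual to the statement that the Hall algebra of a finite type quiver is generated by simple representations,'' and your use of the Green form adjunction (Lemma~\ref{lem:sdGreenForm}) is exactly how that duality is made rigorous. One cosmetic point: what you call the ``forward direction'' is the implication $\rho(\xi)=[0]\otimes\xi \Rightarrow \xi$ cuspidal, which is the \emph{if} part of the biconditional as written; the substance is unaffected.
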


Denote by $\vec{A}_{2n}$ and $\vec{A}_{2n+1}$ the Dynkin diagrams with orientation $-n \rightarrow  \cdots \rightarrow n$. Combined with Proposition \ref{prop:reducDecomp} and Theorem \ref{thm:noKFormIrred}, the next result completes the decomposition of finite type Hall modules into irreducible representations.

Given $\underline{c} = (c_j)_{j \in J} \in W_1^J$, we write $R^{\underline{c}} = \oplus_{j \in J} R_j^{c_j}$.

\begin{Thm}
\label{thm:noHypCusp}
Homogeneous bases for the submodules of cuspidals are as follows:
\begin{enumerate}
\item $\mathcal{C}^{\mathfrak{u}}_{\vec{A}_{2n}} = \{ [ 0 ] \}$ and $\mathcal{C}^{\mathfrak{u}}_{\vec{A}_{2n+1}} = \{ [0], [R_0 ] \}$;
\item $\mathcal{C}^{\mathfrak{sp}}_{\vec{A}_{2n}} = \{[0], \xi_1, \dots, \xi_n\}$, where $\xi_j = \sum_{\underline{c} \in W_1^{[1,j]}} a_{\underline{c}} [R^{\underline{c}}]$ and $a_{\underline{c}} = \prod_{i \, \hbox{\footnotesize odd}} c_i$; 

\item $\mathcal{C}^{\mathfrak{so}}_{\vec{A}_{2n+1}} = \{ [0], \xi_0^b,  \dots, \xi_n^b \}$,
where $\xi^b_j = \sum_{\underline{c}} a_{\underline{c}} [R^{\underline{c}}]$, $a_{\underline{c}}$ is as above and the sum is over all $\underline{c}  \in W_1^{[0,j]}$ satisfying $\sum_{i = 0 }^j c_i =b \in W(\mathbb{F}_q)$.
\end{enumerate}
\end{Thm}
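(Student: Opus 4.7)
The plan is to verify cuspidality of each listed element using Lemma \ref{lem:cuspPrim} (which rephrases cuspidality as $E_i \xi = 0$ for all $i \in Q_0$), then deduce the basis statement via linear independence together with a character comparison using Theorem \ref{thm:hallModDecomp} and Proposition \ref{prop:reducDecomp}.

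Case (1) is handled directly: $[0]$ is trivially cuspidal, and in the unitary $\vec{A}_{2n+1}$ case the only simple subrepresentation $S_0 \subset R_0$ coincides with $R_0$ itself and is non-isotropic since the unitary form is non-degenerate, so $E_0[R_0]=0$ while $E_i[R_0]=0$ trivially for $i\ne 0$. For cases (2) and (3) I fix the orientation $-n \to \cdots \to n$ and examine $E_i[R^{\underline{c}}]$ where $R^{\underline{c}} = \bigoplus_k R_k^{c_k}$. A direct inspection shows that the kernel of the outgoing structure map at node $i > 0$ is one-dimensional and lies in the $R_i^{c_i}$ summand, producing a unique (automatically isotropic) embedding $S_i \hookrightarrow R^{\underline{c}}$; the analogous kernel at negative nodes vanishes so $E_i[R^{\underline{c}}]=0$ for $i<0$ automatically, and in case (3) the $S_0$ subrepresentation (which lives in the $R_0^{c_0}$ summand) is non-isotropic by the same reasoning as for $R_0$, giving $E_0[R^{\underline{c}}]=0$ as well.

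The heart of the argument is the identification of the isotropic reduction $R^{\underline{c}}/\!/S_i$ for $1\le i\le j$. The summand $R_i^{c_i}$ loses its boundary nodes $\pm i$ but retains an interior isomorphic to $I_{-(i-1),i-1}$ equipped with the restricted self-dual form, and this interior must be recognized as $R_{i-1}^{c'_i}$ for an explicit Witt class $c'_i$ determined by $c_i$, the sign data $(s,\tau)$, and the parity of $i$ (via the propagation relation $\psi_\ell = \tau^{\ell+i}\psi_{-i}$ along the identity structure maps). For $i=1$ in case (2) the interior is empty (no node $0$ exists in $\vec{A}_{2n}$), the quotient becomes independent of $c_1$, and $E_1 \xi_j$ carries the vanishing factor $\sum_{c_1 \in W_1} c_1 = 0$. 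For larger $i$, pairs $\{\underline{c},\underline{c}'\}$ differing only in $c_i$ produce isometric quotients (after absorbing $R_{i-1}^{c'_i}$ into the existing $R_{i-1}^{c_{i-1}}$ summand), while the coefficient $a_{\underline{c}} = \prod_{i\text{ odd}} c_i$ changes sign precisely when $i$ is odd; this is what dictates the specific choice of $a_{\underline{c}}$. In case (3) the Witt-sum constraint $\sum c_i = b$ arises from tracking how the total Witt class propagates through this reduction. These cancellations together establish cuspidality of $\xi_j$ and $\xi_j^b$.

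Linear independence of the listed elements is immediate: they have pairwise distinct dimension vectors, except in (3) where for fixed $j$ the $\xi_j^b$ are supported on disjoint subsets of the $\{[R^{\underline{c}}]\}$-basis. For spanning, each cuspidal generates an orthogonal copy of $V_\sigma(\lambda_{\xi})_{\nu_0}$ by Theorem \ref{thm:hallModDecomp}, reducing completeness to the character identity $\operatorname{ch}(\mathcal{M}_Q) = \sum_\xi \operatorname{ch}(V_\sigma(\lambda_\xi)_{\nu_0})$; the left-hand side is orientation-independent by Proposition \ref{prop:reducDecomp} and can be computed from Theorem \ref{thm:sdGabriel} together with the weak Krull--Schmidt theorem for self-dual representations, while the right-hand side is fixed by the highest-weight structure of Proposition \ref{prop:Vsigma}. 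The main obstacle I expect is the precise bookkeeping of Witt classes in the reduction step: pinning down how $c'_i$ depends on $c_i$ and the parity of $i$ is exactly what forces the specific form of $a_{\underline{c}}$ and, in case (3), the total Witt-class constraint.
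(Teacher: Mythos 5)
Your plan inverts the logic of the paper in a way that leaves a real gap. The paper does not exhibit cuspidals and then check they exhaust the supply; it directly \emph{constrains} all cuspidals. The first and crucial step is showing that any homogeneous cuspidal is supported on self-dual representations with no hyperbolic summands: writing an arbitrary self-dual representation as $N = H(U)\oplus R$ with $R$ hyperbolic-free and choosing the total order $\prec$ appropriately, one checks $\Ext^1(I_{-k,k},I_{i,j})=0$ for $i+j\le 0$, whence $\Ext^1(R,U)=\Ext^1(S(U),R)=0$ and $N$ is the \emph{unique} self-dual extension of $R$ by $U$; by Lemma \ref{lem:cuspPrim}, $[N]$ can appear in a cuspidal only if $U=0$. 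You skip this entirely and tacitly assume from the outset that cuspidals lie in $\operatorname{span}\{[R^{\underline{c}}]\}$. Nothing in your argument supplies that reduction.

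Your intended substitute --- a character comparison to show the exhibited elements span --- does not close the gap as sketched. Theorem \ref{thm:hallModDecomp} gives $\mathcal{M}_Q=\bigoplus_{\xi\in\mathcal{C}_Q}V_\sigma(\lambda_\xi)_{\nu_0}$, so if you exhibit a linearly independent set of cuspidals you get only an inclusion $\bigoplus_\xi V_\sigma(\lambda_\xi)_{\nu_0}\hookrightarrow\mathcal{M}_Q$. To upgrade that to equality by characters you would need an independent, explicit character formula for the irreducibles $V_\sigma(\lambda)_{\nu_0}$; Proposition \ref{prop:Vsigma} only asserts existence and uniqueness, not a Weyl--Kac-type formula, and Proposition \ref{prop:reducDecomp} uses only \emph{linear independence} of these characters, not computability. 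So the spanning step you wave at is a genuine missing ingredient, and it is exactly the work the paper avoids by arguing directly about the support.

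Two smaller issues: the computation $R_i^c/\!/S_i\simeq_S R_{i-1}^{\eta(-1)c}$, which you explicitly defer, is the content that fixes both the interval structure $J=[1,j]$ (if $i\in J$ but $i-1\notin J$, the reduction creates a fresh $R_{i-1}$ summand that nothing cancels, so $E_i\xi\ne 0$) and the sign pattern $a_{\underline{c}}=\prod_{i\text{ odd}}c_i$; without nailing it down the argument has no teeth. And your phrase ``pairs $\{\underline{c},\underline{c}'\}$ differing only in $c_i$ produce isometric quotients after absorbing $R_{i-1}^{c'_i}$ into the existing $R_{i-1}^{c_{i-1}}$ summand'' already presupposes $i-1\in J$, which is the very interval claim you need to prove first.
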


\begin{proof}
Fix the following choice of total order $\prec$:
\[
I_{i,j} \prec I_{k,l} \mbox{ if and only if } i >k \mbox{ or } i=k \mbox{ and } j \geq l.
\]
Then $S(I_{i,j}) \preceq I_{i,j}$ if and only if $i + j \leq 0$.

Consider $N = H(U) \oplus R$, where $R$ has no hyperbolic summands and $U$ is written as in the proof of Theorem \ref{thm:noKFormIrred}. Then we have $\Ext^1 (I_{-k,k}, I_{i,j}) =0$ for all $k$ and $i + j \leq 0$, as can be verified directly. Since the underlying representation of $R$ is a direct sum of indecomposables of the form $I_{-k,k}$, $\Ext^1(R,U) = 0$ and dually $\Ext^1(S(U), R)=0$. It follows that $N$ is the only self-dual extension of $R$ by $U$. By Lemma \ref{lem:cuspPrim}, if $[N]$ appears with non-zero coefficient in a cuspidal then $U=0$.

In the unitary case a homogeneous cuspidal must then be of the form $\xi = [ \oplus_{j \in J} R_j]$. But $E_j \xi \neq 0$ whenever $0 \neq j \in J$. Hence, either $J=\emptyset$ or $Q=\vec{A}_{2n+1}$ and $J =\{0\}$, and we arrive at the claimed bases of cuspidals.

Consider now a homogeneous cuspidal $\xi \in \mathcal{M}_{\vec{A}_{2n}}^{\mathfrak{sp}}$. Then $\xi$ contains no terms of the form $[R \oplus R_i^{\oplus 2, c}]$, where $c \neq 0$ (so that $R_i^{\oplus 2, c}$ is not hyperbolic) and $R$ contains no $R_i$ summand. Otherwise, $[S_i^{\oplus 2}] \otimes [R \oplus R_{i-1}^{\oplus 2, c}]$ would appear in $\rho(\xi)$. Therefore,
\[
\xi = \sum_{\underline{c} \in W_1^J} a_{\underline{c}} [R^{\underline{c}}]
\]
for some $J \subset [1, n ]$ and $a_{\underline{c}} \in \mathbb{Q}$. The orientation of $\vec{A}_{2n}$ ensures that $E_i \xi=0$ for $i \leq 0$, while a short calculation shows
\[
R_i^{c} /\!/ S_i \simeq_S R_{i-1}^{\eta(-1)c}, \;\;\; i > 0.
\]
This implies that if $2 \leq i \in J$, then $i -1 \in J$, as otherwise $E_i \xi \neq 0$. So, if non-empty, $J = [1, j]$ for some $1 \leq j \leq n$. The condition $E_1 \xi=0$ is equivalent to $a_{\underline{c}}=-a_{\underline{c}^{\prime}}$ whenever $\underline{c}$ and $\underline{c}^{\prime}$ agree except in their first slot. For $2 \leq i \leq j$, the condition $E_i \xi=0$ is equivalent to $a_{\underline{c}}=-a_{\underline{c}^{\prime}}$ whenever $\underline{c}$ and $\underline{c}^{\prime}$ agree except in their $(i-1)$th and $i$th slots and satisfy (in $W(\mathbb{F}_q)$)
\[
c_{i-1} + \eta(-1) c_i = c^{\prime}_{i-1} +  \eta(-1) c^{\prime}_i.
\]
It is straightforward to verify that, up to a non-zero scalar multiple, $a_{\underline{c}}$ must be as claimed.

The argument for the final case is similar. The index $b \in W(\mathbb{F}_q)$ labels the Witt summand of $\mathcal{M}_{\vec{A}_{2n+1}}^{\mathfrak{so}}$ in which $\langle \xi_j^b \rangle$ lies; see Proposition \ref{prop:wittGradings}.
\end{proof}

\begin{Ex}
The Hall module of orthogonal representations of $\vec{A}_3$ has five irreducible summands, with cuspidal generators $[0]$, $[R_0^1]$, $[R_0^{-1}]$ and
\[
[R_0^1 \oplus R_1^1]  - [R_0^{-1} \oplus R_1^{-1}]  \;\;\;, \;\;\;\; [R_0^{-1} \oplus R_1^1]  - [R_0^1 \oplus R_1^{-1}] .
\]
\end{Ex}

\footnotesize

\bibliographystyle{plain}
\bibliography{mybib}

\def\cprime{$'$}
\begin{thebibliography}{10}

\bibitem{arason1994}
J.~Arason, R.~Elman, and B.~Jacob.
\newblock On generators for the {W}itt ring.
\newblock In {\em Recent advances in real algebraic geometry and quadratic
  forms ({B}erkeley, {CA}, 1990/1991; {S}an {F}rancisco, {CA}, 1991)}, volume
  155 of {\em Contemp. Math.}, pages 247--269. Amer. Math. Soc., Providence,
  RI, 1994.

\bibitem{balmer2005}
P.~Balmer.
\newblock Witt groups.
\newblock In {\em Handbook of {$K$}-theory. {V}ol. 2}, pages 539--576.
  Springer, Berlin, 2005.

\bibitem{berenstein2012}
A.~Berenstein and J.~Greenstein.
\newblock Primitively generated {H}all algebras.
\newblock arXiv:1209.2770, 2012.

\bibitem{bertrand2013}
D.~Bertrand.
\newblock Extensions panach\'ees autoduales.
\newblock {\em J. K-Theory}, 11(2):393--411, 2013.

\bibitem{buhler2010}
T.~B{\"u}hler.
\newblock Exact categories.
\newblock {\em Expo. Math.}, 28(1):1--69, 2010.

\bibitem{crawley1992}
W.~Crawley-Boevey.
\newblock Lectures on representations of quivers.
\newblock Available at www1.maths.leeds.ac.uk/\textasciitilde pmtwc/, 1992.

\bibitem{derksen2002}
H.~Derksen and J.~Weyman.
\newblock Generalized quivers associated to reductive groups.
\newblock {\em Colloq. Math.}, 94(2):151--173, 2002.

\bibitem{enomoto2009}
N.~Enomoto.
\newblock A quiver construction of symmetric crystals.
\newblock {\em Int. Math. Res. Not.}, 12:2200--2247, 2009.

\bibitem{enomoto2008}
N.~Enomoto and M.~Kashiwara.
\newblock Symmetric crystals for {$\mathfrak{gl}_\infty$}.
\newblock {\em Publ. Res. Inst. Math. Sci.}, 44(3):837--891, 2008.

\bibitem{gabriel1972}
P.~Gabriel.
\newblock Unzerlegbare {D}arstellungen. {I}.
\newblock {\em Manuscripta Math.}, 6:71--103; correction, ibid. 6 (1972), 309,
  1972.

\bibitem{green1995}
J.~Green.
\newblock Hall algebras, hereditary algebras and quantum groups.
\newblock {\em Invent. Math.}, 120(2):361--377, 1995.

\bibitem{grothendieck1972}
A.~Grothendieck.
\newblock {\em Groupes de monodromie en g\'eom\'etrie alg\'ebrique. {I}}.
\newblock Lecture Notes in Mathematics, Vol. 288. Springer-Verlag, Berlin,
  1972.

\bibitem{hubery2006}
A.~Hubery.
\newblock From triangulated categories to {L}ie algebras: a theorem of {P}eng
  and {X}iao.
\newblock In {\em Trends in representation theory of algebras and related
  topics}, volume 406 of {\em Contemp. Math.}, pages 51--66. Amer. Math. Soc.,
  Providence, RI, 2006.

\bibitem{kapranov1997}
M.~Kapranov.
\newblock Eisenstein series and quantum affine algebras.
\newblock {\em J. Math. Sci. (New York)}, 84(5):1311--1360, 1997.

\bibitem{kapranov2012}
M.~Kapranov, O.~Schiffmann, and E.~Vasserot.
\newblock The {H}all algebra of a curve.
\newblock arxiv:1201:6185, 2012.

\bibitem{lusztig1990}
G.~Lusztig.
\newblock Canonical bases arising from quantized enveloping algebras.
\newblock {\em J. Amer. Math. Soc.}, 3(2):447--498, 1990.

\bibitem{lusztig1998}
G.~Lusztig.
\newblock Canonical bases and {H}all algebras.
\newblock In {\em Representation theories and algebraic geometry ({M}ontreal,
  {PQ}, 1997)}, volume 514, pages 365--399. Kluwer Acad. Publ., Dordrecht,
  1998.

\bibitem{quebbemann1979}
H.-G. Quebbemann, W.~Scharlau, and M.~Schulte.
\newblock Quadratic and {H}ermitian forms in additive and abelian categories.
\newblock {\em J. Algebra}, 59(2):264--289, 1979.

\bibitem{quillen1973}
D.~Quillen.
\newblock Higher algebraic {$K$}-theory. {I}.
\newblock In {\em Algebraic {$K$}-theory, {I}: {H}igher {$K$}-theories ({P}roc.
  {C}onf., {B}attelle {M}emorial {I}nst., {S}eattle, {W}ash., 1972)}, pages
  85--147. Lecture Notes in Math., Vol. 341. Springer, Berlin, 1973.

\bibitem{riedtmann1994}
C.~Riedtmann.
\newblock Lie algebras generated by indecomposables.
\newblock {\em J. Algebra}, 170(2):526--546, 1994.

\bibitem{ringel1990}
C.~Ringel.
\newblock Hall algebras.
\newblock In {\em Topics in algebra, {P}art 1 ({W}arsaw, 1988)}, volume~26 of
  {\em Banach Center Publ.}, pages 433--447. PWN, Warsaw, 1990.

\bibitem{ringel1990v3}
C.~Ringel.
\newblock Hall algebras and quantum groups.
\newblock {\em Invent. Math.}, 101(3):583--591, 1990.

\bibitem{schiffmann2006}
O.~Schiffmann.
\newblock Lectures on {H}all algebras.
\newblock ar{X}iv:math/0611617v2, 2006.

\bibitem{schiffmann2011}
O.~Schiffmann and E.~Vasserot.
\newblock The elliptic {H}all algebra, {C}herednik {H}ecke algebras and
  {M}acdonald polynomials.
\newblock {\em Compos. Math.}, 147(1):188--234, 2011.

\bibitem{sevenhant2001}
B.~Sevenhant and M.~Van~den Bergh.
\newblock A relation between a conjecture of {K}ac and the structure of the
  {H}all algebra.
\newblock {\em J. Pure Appl. Algebra}, 160(2-3):319--332, 2001.

\bibitem{tadic1995}
M.~Tadi{\'c}.
\newblock Structure arising from induction and {J}acquet modules of
  representations of classical {$p$}-adic groups.
\newblock {\em J. Algebra}, 177(1):1--33, 1995.

\bibitem{leeuwen1991}
M.~van Leeuwen.
\newblock An application of {H}opf-algebra techniques to representations of
  finite classical groups.
\newblock {\em J. Algebra}, 140(1):210--246, 1991.

\bibitem{varagnolo2011}
M.~Varagnolo and E.~Vasserot.
\newblock Canonical bases and affine {H}ecke algebras of type {B}.
\newblock {\em Invent. Math.}, 185(3):593--693, 2011.

\bibitem{zelevinsky1981}
A.~Zelevinsky.
\newblock {\em Representations of finite classical groups: A Hopf algebra
  approach}, volume 869 of {\em Lecture Notes in Mathematics}.
\newblock Springer-Verlag, Berlin, 1981.

\bibitem{zubkov2005}
A.~Zubkov.
\newblock Invariants of mixed representations of quivers. {I}.
\newblock {\em J. Algebra Appl.}, 4(3):245--285, 2005.

\end{thebibliography}

\end{document}